\definecolor{ao(english)}{rgb}{0.0, 0.5, 0.0}
\definecolor{darkgreen}{rgb}{0.0, 0.5, 0.0}
	\definecolor{eng}{rgb}{0.0, 0.5, 0.0}
\definecolor{apple}{rgb}{0.55, 0.71, 0.0}
\definecolor{cadmium}{rgb}{0.0, 0.42, 0.24}
\definecolor{darkspringgreen}{rgb}{0.09, 0.45, 0.27}
\definecolor{amethyst}{rgb}{0.6, 0.4, 0.8}
\definecolor{ao}{rgb}{0.0, 0.0, 1.0}
\definecolor{atomictangerine}{rgb}{1.0, 0.6, 0.4}
\definecolor{carmine}{rgb}{0.59, 0.0, 0.09}
\definecolor{toggle}{rgb}{1.0, 0.94, 0.96}
 \newcommand{\pDelta}{\Delta}
 \newcommand{\csigma}{{{{\color{magenta}\sigma}}}}
 \newcommand{\ctau}{{{{\color{cyan}\tau}}}}
 \newcommand{\al}{{{{\color{magenta} \sigma}}}}
 \newcommand{\crho}{{ \color{darkgreen}\rho}}
  \newcommand{\gam}{{{{\color{ao(english)} \rho}}}}
 \newcommand{\bet}{{{\color{cyan} \tau}}}
\newcommand{\eps}{ \varepsilon}
\newcommand{\w}{{\underline{w}}}
\newcommand{\uvee}{{\underline{v} }}
\newcommand{\vvv}{{\underline{z} }}
\newcommand{\y}{{\underline{y}}}
\newcommand{\x}{{\underline{x}}}
\newcommand{\grade}{v}
\newlength{\superthick}
\newlength{\cornerradius}
\tikzstyle{corner}=[rounded corners=\cornerradius]
\tikzstyle{dot}=[circle, inner sep=0pt, minimum size=4.8pt]
\tikzstyle{string}=[line width=\superthick]
\tikzstyle{std}=[string,dash pattern=on 0.9pt off 0.9pt]
\definecolor{realcyan}{rgb}{0,1,1}
\mathchardef\mhyphen="2D
\definecolor{mediumblue}{rgb}{0.0, 0.0, 0.8}
\newcommand{\Res}{{\rm Res}}
\renewcommand{\geq}{\geqslant}
\renewcommand{\leq}{\leqslant}
\tikzset{wei/.style= 
{red,double=red,double
distance=0.5pt}}
\tikzset{wei2/.style={red,double=red,double
distance=0.5pt}}
\numberwithin{equation}{section}
\newtheorem{thm}{Theorem}[section]
\newtheorem{cor}[thm]{Corollary}
\newtheorem{assump}[thm]{Assumption}
\newtheorem{lem}[thm]{Lemma}
\newtheorem{prop}[thm]{Proposition}
\newtheorem*{prop*}{Proposition}
\newtheorem*{thmA*}{Theorem A}
\newtheorem*{thmB*}{Theorem B}
\newtheorem*{thmC*}{Theorem C}\newtheorem*{thm*}{Theorem D}
\newtheorem*{cor*}{Corollary}
\newtheorem*{conj*}{Conjecture A}
\newtheorem*{conj1*}{Conjecture B}
\newtheorem*{Acknowledgements*}{Acknowledgements}
\theoremstyle{rmk}
\theoremstyle{defn}
\newtheorem{rmk}[thm]{Remark}
\newtheorem{defn}[thm]{Definition}
\newtheorem{eg}[thm]{Example}
\newcommand{\rad}{\mathrm{rad}}
\newcommand{\res}{\mathrm{res}}
\newcommand{\Std}{{\rm Std}}
\newcommand{\Shape}{\operatorname{Shape}}
\newcommand{\sts}{\mathsf{s}}  
\newcommand{\stt}{\mathsf{t}}  
\newcommand{\stu}{\mathsf{u}}  
\newcommand{\stv}{\mathsf{v}}  
\newcommand{\ZZ}{{\Bbbk}}
\newcommand{\NN}{{\mathbb N}}
\newcommand{\CC}{{\mathbb{C}}}
\DeclareMathOperator{\Hom}{Hom}
\newcommand\mydots{\makebox[1em][c]{\color{cyan}.\hfil\color{magenta}.\hfil\color{cyan}.\hfil\color{magenta}.}}
\tikzset{
ultra thin/.style= {line width=0.05pt},
very thin/.style=  {line width=0.2pt},
thin/.style=       {line width=0.1pt},
semithick/.style=  {line width=0.6pt},
thick/.style=      {line width=0.8pt},
very thick/.style= {line width=1.2pt},
ultra thick/.style={line width=1.6pt}
}
\crefname{defn}{Definition}{Definitions}
\crefname{thm}{Theorem}{Theorems}
\crefname{prop}{Proposition}{Propositions}
\crefname{lem}{Lemma}{Lemmas}
\crefname{cor}{Corollary}{Corollaries}
\crefname{conj}{Conjecture}{Conjectures}
\crefname{section}{Section}{Sections}
\crefname{subsection}{Subsection}{Subsections}
\crefname{eg}{Example}{Examples}
\crefname{figure}{Figure}{Figures}
\crefname{rem}{Remark}{Remarks}
\crefname{rmk}{Remark}{Remarks}
\crefname{equation}{equation}{equation}
\Crefname{defn}{Definition}{Definitions}
\Crefname{thm}{Theorem}{Theorems}
\Crefname{prop}{Proposition}{Propositions}
\Crefname{lem}{Lemma}{Lemmas}
\Crefname{cor}{Corollary}{Corollaries}
\Crefname{conj}{Conjecture}{Conjectures}
\Crefname{section}{Section}{Sections}
\Crefname{subsection}{Subsection}{Subsections}
\Crefname{eg}{Example}{Examples}
\Crefname{figure}{Figure}{Figures}
\Crefname{rem}{Remark}{Remarks}
\Crefname{rmk}{Remark}{Remarks}
\begin{document}

 \title  
 {The modular Weyl--Kac character formula }
 
  \author{Chris Bowman}
       \address{Department of Mathematics, 
University of York, Heslington, York, YO10 5DD, UK}
\email{Chris.Bowman-Scargill@york.ac.uk}

		\author{Amit  Hazi}
		\address{Department of Mathematics, City, University of London,   London, UK}
\email{Amit.Hazi@city.ac.uk}

 \author{Emily Norton}
 \address{University of Kent, Canterbury CT2 7NF, UK.}
\email{E.Norton@kent.ac.uk}

    \begin{abstract}
      
We classify and  explicitly construct the irreducible graded representations of anti-spherical   Hecke categories which are concentrated in one degree. Each of these homogeneous representations  
 is  one-dimensional and can be cohomologically  constructed via a   BGG resolution  involving every (infinite dimensional) standard representation  of the category.  We hence  determine   the complete first   row  of the inverse  parabolic   $p$-Kazhdan--Lusztig matrix for an arbitrary Coxeter group  and an arbitrary parabolic subgroup.  
This generalises the Weyl--Kac character formula to all Coxeter systems (and their parabolics) and proves that this generalised formula is rigid with respect to base change to   fields of arbitrary characteristic.  
   \end{abstract}
  \maketitle
  
  \section*{Introduction}

  The discovery of counterexamples to the expected bounds of Lusztig's conjecture was an earthquake in representation theory.  It marked the beginning of a new era of Lie theory, in which    diagrammatic Hecke categories  play centre stage in our attempts to understand  the structure of   algebraic groups   in terms of parabolic  ``$p$-Kazhdan--Lusztig polynomials".   
There are precious few  general results concerning   either the simple  representations of these diagrammatic Hecke categories, or the underlying  combinatorics of  parabolic $p$-Kazhdan--Lusztig polynomials.

We let $\Bbbk$ denote an algebraically closed field of   characteristic $p>2$.  
Given $ W$ an arbitrary  Coxeter group and $P$ an arbitrary parabolic subgroup, we   classify and construct   the homogeneous (simple) representations of the  anti-spherical Hecke category $\mathcal{H}_{P\backslash W}^\Bbbk$ 
(note that $\mathcal{H}_{P\backslash W}^\Bbbk$ is a $\Bbbk$-linear graded category and so this notion makes sense)
constructed from the ``geometric realisation" of $W$.  
 We prove that a  $\mathcal{H}_{P\backslash W}^\Bbbk$-module is homogeneous if and only if it is one-dimensional if and only if it is the simple $L(1_{P\backslash W})$ labelled by the identity coset $1_{P\backslash W}\in {^PW}$ (and we provide a basis of $L(1_{P\backslash W})$)   by way of Libedinsky's light leaves construction).

\begin{thmA*}
The anti-spherical Hecke category 
    $ \mathcal{H}_{P\backslash W}^\Bbbk $ admits a unique 
   homogeneous simple
     module, $L( 1_{P\backslash W})$, labelled by the identity   
     coset $1_{P\backslash W}$.  
     This simple module is a one-dimensional quotient of the infinite-dimensional standard  $ \mathcal{H}_{P\backslash W}^\Bbbk$-module $\Delta( 1_{P\backslash W})$. 
\end{thmA*}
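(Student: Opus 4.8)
\emph{Proof strategy.} Write $\mathcal H=\mathcal H_{P\backslash W}$ and $\mathbb 1=B_{1_{P\backslash W}}$. The plan is to deduce the theorem from three standard structural features of the diagrammatic anti-spherical Hecke category, which I take as known: it is graded cellular with cell poset ${}^{P}W$, so the simples $L(w)$ ($w\in{}^{P}W$) are the heads of the standard modules $\Delta(w)$, which carry Libedinsky light-leaf bases and contravariant graded forms $\langle-,-\rangle$; it is \emph{positively graded}, i.e.\ $\Hom^{<0}_{\mathcal H}(B_x,B_y)=0$ and $\Hom^{0}_{\mathcal H}(B_x,B_y)=\delta_{x,y}\Bbbk$, each $B_x$ being indecomposable; and it satisfies $B_x\otimes B_s\cong B_x(1)\oplus B_x(-1)\oplus(\text{lower terms})$ whenever $xs<x$ (the categorification of $\underline N_x\,\underline b_s=(v+v^{-1})\underline N_x$). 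The argument then splits into an identification step and a uniqueness step.

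\emph{Step 1: identifying $L(1_{P\backslash W})$.} First I would pin down the right-hand side. Since $1_{P\backslash W}$ is extremal in the cell order, the double-leaves basis gives $\Delta(1_{P\backslash W})(\mathsf{BS}(\underline v))\cong\Hom^{\bullet}_{\mathcal H}(\mathsf{BS}(\underline v),\mathbb 1)$ with nothing to quotient out. A short induction on word length --- using that $\underline N_w\,\underline b_s=(v+v^{-1})\underline N_w$ with $w\ne 1_{P\backslash W}$ when $ws<w$, that the error terms in the length-increasing case are $\underline N_y$ with $ys<y$ (hence $y\ne 1_{P\backslash W}$), and that the remaining product is $0$ --- shows $\mathbb 1$ never occurs as a summand of $\mathsf{BS}(\underline v)$ for $\underline v\ne\emptyset$. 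With positivity this forces $\Delta(1_{P\backslash W})$ to be non-negatively graded with one-dimensional degree-$0$ part, spanned by the empty light leaf $1_{\mathbb 1}$. The form pairs degree $i$ with degree $-i$ into $\operatorname{End}^{0}_{\mathcal H}(\mathbb 1)=\Bbbk$ and has $\langle 1_{\mathbb 1},1_{\mathbb 1}\rangle=1$, so its radical is precisely $\Delta(1_{P\backslash W})_{>0}$; hence $L(1_{P\backslash W})=\Delta(1_{P\backslash W})/\Delta(1_{P\backslash W})_{>0}$ is one-dimensional, concentrated in degree $0$, in particular homogeneous, and it is a proper quotient of $\Delta(1_{P\backslash W})$, which is infinite-dimensional when $W\ne P$ (for instance its $(s,\dots,s)$-weight spaces have unbounded dimension).

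\emph{Step 2: no other simple is homogeneous.} For uniqueness, let $M$ be a simple graded $\mathcal H$-module concentrated in a single degree; normalise to degree $0$. Being a graded functor concentrated in degree $0$, $M$ annihilates every morphism of nonzero degree and satisfies $M(X(d))=M(X)(d)$ for each object $X$ and shift $d$. Since the reduced-word object $\mathsf{BS}(\underline x)$ contains $B_x=B_x(0)$ as a degree-$0$ summand, $M(B_x)$ is a summand of the degree-$0$ space $M(\mathsf{BS}(\underline x))$ and hence concentrated in degree $0$. If $x\ne 1_{P\backslash W}$, pick a right descent $s$ of $x$: then $B_x(1)\oplus B_x(-1)$ is a direct summand of $B_x\otimes B_s$, itself a summand of $\mathsf{BS}(\underline x\cdot(s))$, so $M(B_x)(1)\oplus M(B_x)(-1)$ is a direct summand of the degree-$0$ space $M(\mathsf{BS}(\underline x\cdot(s)))$; but $M(B_x)(\pm1)$ sit in degrees $\mp1$, forcing $M(B_x)=0$. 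Thus $M(B_x)=0$ for every $x\ne 1_{P\backslash W}$. As $\mathbb 1$ is a summand of no nonempty Bott--Samelson object, $M$ vanishes on all $\mathsf{BS}(\underline v)$ with $\underline v\ne\emptyset$, and $\mathcal H$ acts on the degree-$0$ space $M(\mathbb 1)$ only through $\operatorname{End}^{0}_{\mathcal H}(\mathbb 1)=\Bbbk$; simplicity forces $\dim_{\Bbbk}M(\mathbb 1)=1$, i.e.\ $M\cong L(1_{P\backslash W})$. Together with Step~1 this yields the equivalences ``homogeneous $\Leftrightarrow$ one-dimensional $\Leftrightarrow$ isomorphic to $L(1_{P\backslash W})$'' and hence Theorem~A; the light-leaves basis of $L(1_{P\backslash W})$ is simply the surviving vector $1_{\mathbb 1}\in\Delta(1_{P\backslash W})$.

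\emph{Main obstacle.} The argument above is formally short; the substance lies in securing its three structural inputs \emph{uniformly} over all Coxeter systems, all parabolics and all fields $\Bbbk$ --- in particular the graded cellularity of $\mathcal H_{P\backslash W}$ and the positivity of its $\Hom$-spaces ($\Hom^{<0}=0$, $\Hom^{0}$ diagonal on indecomposables, $B_x$ indecomposable), which is the place where positive characteristic could in principle intervene. One must also verify the decomposition $B_x\otimes B_s\cong B_x(1)\oplus B_x(-1)\oplus(\text{l.t.})$ for $xs<x$ and the Hecke-algebra vanishing that keeps $\mathbb 1$ out of every nonempty Bott--Samelson object, and check that the degenerate realisation is not an exception. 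Granting these, the rest is the soft argument sketched here.
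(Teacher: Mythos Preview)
Your Step~2 is essentially the paper's uniqueness argument in categorical dress. The paper fixes $w\ne 1_{P\backslash W}$, picks a right descent $\tau$ (so $w\tau=w'<w$), and computes the $2\times 2$ Gram matrix on the weight space ${\sf 1}_{\underline{w'}\tau\tau}\Delta(\underline{w})$: the two light leaves ${\sf 1}_{\underline{w'}}\otimes{\sf spot}^\tau_\emptyset\otimes{\sf 1}_\tau$ and ${\sf 1}_{\underline{w'}}\otimes{\sf fork}^{\tau\tau}_\tau$ sit in degrees $\mp 1$ and the matrix $\left(\begin{smallmatrix}0&1\\1&0\end{smallmatrix}\right)$ has full rank, so both survive in $L(w)$. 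Your formulation via $B_w\otimes B_\tau\cong B_w(1)\oplus B_w(-1)$ is exactly what makes that Gram matrix nondegenerate, so the two arguments coincide.

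Your Step~1, however, has a genuine gap. The claim that positivity of $\Hom^\bullet(B_x,B_y)$ forces $\Delta(1_{P\backslash W})$ to be non-negatively graded with one-dimensional degree-$0$ part is false. Positivity is a statement about \emph{indecomposables}; it does not transfer to $\Hom^\bullet(\mathsf{BS}(\underline v),\mathbb 1)$, because the decomposition $\mathsf{BS}(\underline v)\cong\bigoplus B_x(k)$ involves grading shifts. Concretely, take $P=1$ and any $s\in S$: the cap ${\sf spot}^\emptyset_s\circ{\sf fork}^s_{ss}\in{\sf 1}_{ss}\Delta(1)$ is a degree-$0$ light leaf distinct from $c_\emptyset$, and the light leaf in ${\sf 1}_{ssss}\Delta(1)$ given by three forks followed by a spot has degree $-2$. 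So $\Delta(1)_{>0}$ is strictly smaller than the radical and your form argument (``radical $=\Delta(1)_{>0}$'') cannot go through. In your own language, $\mathsf{BS}(s,s,s,s)\cong B_s(3)\oplus 3B_s(1)\oplus 3B_s(-1)\oplus B_s(-3)$, and the summand $B_s(3)$ contributes $\Hom^{-2}(\mathsf{BS}(s^4),\mathbb 1)\ne 0$.

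There is a short repair that stays within your framework and avoids the paper's BGG resolution: since $1_{P\backslash W}$ is minimal in the Bruhat order, highest-weight theory gives $[\Delta(y):L(1_{P\backslash W})]=\delta_{y,1}$, hence $\Hom(P(x),L(1_{P\backslash W}))=0$ for $x\ne 1$; equivalently $L(1_{P\backslash W})(B_x)=0$ for $x\ne 1$. Combined with your (correct) observation that $\mathbb 1$ is never a summand of a nonempty Bott--Samelson object, this forces $L(1_{P\backslash W})(\mathsf{BS}(\underline v))=0$ for $\underline v\ne\emptyset$ and $L(1_{P\backslash W})(\mathbb 1)=\Bbbk$. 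The paper instead obtains this one-dimensionality as the $H_0$ of the BGG complex, which is considerably more work but yields the character formula and the resolution as a byproduct.
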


Concurrently, we provide a  cohomological  construction of the unique homogeneous $\mathcal{H}_{P\backslash W}^\Bbbk$-module   by way of a  BGG resolution.  Within this BGG resolution, every one of the (infinite-dimensional)  standard $\mathcal{H}_{P\backslash W}^\Bbbk$-representations $\Delta( w)$ for $w\in {^PW}$     appears in  degree as dictated by the length function on the underlying Hecke algebra.   
Our BGG resolutions allow us to calculate   
 the complete first     row of the     inverse  parabolic $p$-Kazhdan--Lusztig matrix for $W$ an arbitrary Coxeter group and $P$ an arbitrary parabolic subgroup.  
 This  provides  the first   
   family of  explicit 
  (inverse)     $p$-Kazhdan--Lusztig polynomials to admit a uniform description across   arbitrary Coxeter groups  and their parabolic subgroups.   
  In the case that $W$ is an affine Weyl group and $P$ is the maximal finite parabolic subgroup 
   this gives new character formulas for representations of the corresponding algebraic groups through \cite{AJS,MR3868004}.  
 \begin{thmB*}
Associated to the unique  homogeneous simple   $ \mathcal{H}_{P\backslash W}^\Bbbk $-module, $L( 1_{P\backslash W})$,   we have a 
  complex 
  $   C_\bullet (1_{P\backslash W})=   \bigoplus_{
\begin{subarray}c 
w \in {^PW} 
\end{subarray}
}\Delta(w)\langle \ell(w)\rangle
  $
with differential given by an alternating  sum over all ``simple reflection homomorphisms".  
This complex 
 is exact except in degree zero, where    $H_0(C_\bullet( 1_{P\backslash W}))=L( 1_{P\backslash W}).$    
We hence   conclusively  generalise the Weyl--Kac character formula   to all (parabolic) Coxeter systems  
  via the formula
$$
[L( 1_{P\backslash W})] = \sum_{w\in {^PW}} (-\grade)^{\ell(w)}[\Delta(w)]
$$
and thus conclude that the first  row of the inverse parabolic $p$-Kazhdan--Lusztig matrix   has entries $(- \grade)^{\ell(w)}$ regardless of the characteristic $p\neq 2$.   
\end{thmB*}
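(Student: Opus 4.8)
The plan is to write $C_\bullet(1_{P\backslash W})$ down explicitly, reduce $d\circ d=0$ to a finite local check, and then prove exactness, which is where the content lies. For $w\in{^PW}$ and a simple reflection $s$ with $ws\in{^PW}$ and $\ell(ws)<\ell(w)$, the cellular structure of $\mathcal H_{P\backslash W}$ together with Libedinsky's light-leaves calculus supplies a canonical degree-one \emph{simple reflection homomorphism} $\varphi^w_s\colon\Delta(w)\to\Delta(ws)$: on light leaves it caps the final $s$-coloured strand of a reduced expression for $w$ ending in $s$ with a univalent vertex, and the one-dimensionality of the relevant graded $\Hom$-space pins it down up to a scalar. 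Placing $\Delta(w)$ in homological degree $\ell(w)$ and attaching to each edge $w\to ws$ a sign $\varepsilon(w,s)\in\{\pm 1\}$ prescribed by a fixed total order on the simple reflections, I set $C_\bullet(1_{P\backslash W})=\bigoplus_{w\in{^PW}}\Delta(w)\langle\ell(w)\rangle$ with $d=\sum\varepsilon(w,s)\,\varphi^w_s$.

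The relation $d\circ d=0$ is local. A component of $d^2$ is a signed sum of composites $\varphi^{ws}_t\varphi^w_s\colon\Delta(w)\to\Delta(wst)$ with $s\neq t$ and $\ell(wst)=\ell(w)-2$, and the possible local configurations of $w,ws,wt,wst$ in the weak order on ${^PW}$ --- the ``diamonds'' --- fall into a short list. If $m_{st}=2$ then $wt$ also lies below $w$, both paths contribute, and the two composites are equal (capping two non-adjacent strands commutes in the diagrammatic calculus), so the sign convention cancels them. If $m_{st}\geq 3$ then only the path through $ws$ contributes, and one checks directly from the defining relations --- capping, in turn, the last two strands of a reduced expression of the shape $\underline u\,t\,s$ --- that $\varphi^{ws}_t\varphi^w_s=0$. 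Hence $d\circ d=0$.

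Exactness --- $H_n(C_\bullet(1_{P\backslash W}))=0$ for $n>0$ and $H_0=L(1_{P\backslash W})$ --- is the main obstacle. One half is formal: applying $\Hom_{\mathcal H_{P\backslash W}}(-,\nabla(v))$ for each $v\in{^PW}$, and using the quasi-hereditary orthogonality $\Hom(\Delta(w),\nabla(v))=\delta_{w,v}\,\Bbbk$ together with $\operatorname{Ext}^{>0}(\Delta(w),\nabla(v))=0$, the complex $\Hom(C_\bullet(1_{P\backslash W}),\nabla(v))$ already computes $\operatorname{RHom}$ and collapses to a single one-dimensional graded space concentrated in homological degree $\ell(v)$; pairing against the costandard basis yields at once the Grothendieck-group identity $\sum_n(-1)^n[H_n(C_\bullet(1_{P\backslash W}))]=\sum_{w\in{^PW}}(-\grade)^{\ell(w)}[\Delta(w)]$. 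Upgrading this alternating-sum statement to genuine acyclicity is the hard part. Here I would argue combinatorially: order the light-leaves bases of the $\Delta(w)$ lexicographically by their final step and construct a sign-reversing, acyclic matching on the whole set of basis vectors of $C_\bullet(1_{P\backslash W})$ --- each basis light leaf of a $\Delta(w)$ paired, according to whether or not its final step can be capped off, with one in a $\Delta(ws)$ one homological degree below or one in a $\Delta(wt)$ one degree above --- so that a single unmatched (critical) vector remains. This is the mechanism by which a Koszul complex on a regular sequence is shown exact; it forces $H_n=0$ for $n>0$ and $\dim H_0=1$, and since $H_0$ is then a one-dimensional quotient of $\Delta(1_{P\backslash W})$, Theorem~A identifies it with $L(1_{P\backslash W})$.

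The character formula $[L(1_{P\backslash W})]=\sum_{w\in{^PW}}(-\grade)^{\ell(w)}[\Delta(w)]$ is then just the Euler characteristic of this resolution, and since the coefficient of $[\Delta(w)]$ is $(-\grade)^{\ell(w)}$ with no reference to the ground field, the statement about the first row of the inverse parabolic $p$-Kazhdan--Lusztig matrix is immediate. The step I expect to cost the most is the matching: making it compatible with the differential and proving it acyclic forces one to control the interplay between reduced expressions in ${^PW}$ and the weak order, and it is there --- in what is essentially the classification of diamonds --- that the Coxeter combinatorics does the real work.
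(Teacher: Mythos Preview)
Your proposal has a genuine error in the construction of the differential, and this propagates to the $d\circ d=0$ argument.

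You take the edges of the complex to be right descents: maps $\Delta(w)\to\Delta(ws)$ for $s\in S$ with $\ell(ws)<\ell(w)$. The paper's differential runs over all \emph{Carter--Payne pairs}, i.e.\ all covers $w>y$ in the strong Bruhat order with $\ell(y)=\ell(w)-1$; the map $\varphi^{\underline w}_{\underline y}$ inserts a spot at an \emph{interior} position of a reduced expression for $w$, not only at the end. These are different complexes. In type $A_2$ already, your complex has the single chain $\Delta(s_1s_2)\to\Delta(s_1)\to\Delta(1)$ (since $s_1$ is not a right descent of $s_1s_2$), whereas the paper's complex has the full diamond $\Delta(s_1s_2)\to\Delta(s_1)\oplus\Delta(s_2)\to\Delta(1)$.

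This is exactly why your $d^2=0$ argument breaks. Your claim that ``if $m_{st}\ge 3$ then $\varphi^{ws}_t\varphi^{w}_s=0$'' is false: with $w=s_1s_2$, $s=s_2$, $t=s_1$, the composite sends ${\sf 1}_{s_1s_2}$ to ${\sf spot}^{s_1}_\emptyset\otimes{\sf spot}^{s_2}_\emptyset$, which is a nonzero light-leaves basis vector of $\Delta(1)$. In the paper's complex this nonzero composite is cancelled by the other side of the Bruhat diamond through $s_2$; the two composites agree because inserting two spots at different positions commutes (graph isotopy), and then one invokes the classical BGG sign function on Bruhat squares, not a sign coming from an ordering of $S$.

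For exactness the paper does not attempt a combinatorial matching on light leaves. Instead it restricts along ${\sf 1}_{\leq \underline w}\mathcal H_{P\backslash W}{\sf 1}_{\leq \underline w}\hookrightarrow{\sf 1}_{\leq \underline w\tau}\mathcal H_{P\backslash W}{\sf 1}_{\leq \underline w\tau}$ and uses the branching rule for standards: each $\Delta_{\leq \underline w\tau}(\underline x)$ with $x\tau>x$ and each $\Delta_{\leq \underline w\tau}(\underline y)$ with $y\tau>y$ restricts as an extension of $\Delta_{\leq \underline w}(\underline x)$ by $\Delta_{\leq \underline w}(\underline y)$, and the Carter--Payne map $\varphi^{\underline x}_{\underline y}$ (for $x=y\tau$) restricts to the identity on each piece. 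Thus the restricted complex decomposes into contractible two-term pieces and is acyclic. Since restriction kills no simple $L(\underline x)$ with $x\neq 1_{P\backslash W}$, all homology of the original complex is concentrated at $L(1_{P\backslash W})$, which appears exactly once in degree zero. Your Euler-characteristic computation is fine once the resolution is in hand, but the construction and the proof of acyclicity both need to be replaced along these lines.
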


Specialising to  the case of (affine) Weyl groups, our character formulas and resolutions have a long history.    
For finite Weyl groups,  Bernstein--Gelfand--Gelfand  constructed their eponymous resolutions in the context of finite dimensional Lie algebras \cite{bgg}.  
For Kac--Moody Lie algebras these were the subject of   Kac--Kazhdan's conjecture \cite{MR547842} (over $\mathbb{C}$) 
which was verified by Wakimoto   (for $W=\widehat{\mathfrak{S}}_2$ \cite{MR841673}),
 Hayashi (for classical type \cite{MR958588})
and Feigin,   Frenkel, and Ku  (in full generality  
  \cite{MR1011602,MR1187549})
and was extended to arbitrary fields by Mathieu \cite{MR1401425} and      subsequently   reproven by Arakawa using $W$-algebras \cite{MR2219212,MR2318558}.  
For parabolic subgroups of finite Weyl groups, our resolutions 
were first constructed in \cite{MR476813} and went on to have spectacular applications in the study of the Laplacian on Euclidean space \cite{MR2180410}.  
   For the infinite dihedral Weyl group with two generators,  these resolutions 
   were generalised to the Virasoro and blob algebras of algebraic statistical mechanics  \cite{MR995219,MS94,GJSV13}.  
For $W$ the finite symmetric group and $P$ a  maximal parabolic, these resolutions were one of the highlights of  Brundan--Stroppel's   founding work on categorical representation theory \cite{MR2600694,MR2781018}.
 For $W$ the affine symmetric group and $P$ the maximal finite parabolic  and   $\Bbbk=\CC$, Theorem~B  proves  a recent conjecture of   Berkesch--Griffeth--Sam  \cite{conjecture}
     concerning BGG resolutions of unitary modules for  Cherednik algebras.

 Kazhdan--Lusztig conjectured that much of combinatorial Lie theory should generalise beyond the realm  of Weyl groups 
 (where our resolutions admit the  geometric realisations discussed  above)   to arbitrary Coxeter groups.  
Hecke categories provide  the structural perspective in which 
the  Kazhdan--Lusztig  conjecture was finally proven  \cite{MR3245013} and serve as the archetypal  setting for studying all Lie theoretic objects.  
  In this light, Theorem B   provides the prototype for all the aforementioned 
  BGG resolutions 
  and  vastly generalises their construction to all  parabolic Coxeter systems (which are poorly understood in general, but include for example  the finite, affine, compact, paracompact, hypercompact, and Lorentzian Coxeter groups) 
      and  to    fields of positive characteristic    --- the instances  for   Lie groups, Kac--Moody  Lie algebras, and their parabolic analogues are merely the 
 examples for  which a classical geometric  structure exists.  
\color{black}

Our proof is surprisingly elementary,  
  the key idea is to exploit the 
  the  inductive nature of the
  light leaves construction.  This plays off  both the monoidal structure and the highest-weight structure of the Hecke category, which are compatible with certain truncated restriction functors.   
  Our proof is  more compact than many of the proofs for (affine) Weyl groups cited  above and should be accessible to those who are new to the research area.  
 
\color{black}

\subsection*{$p$-Kazhdan--Lusztig polynomials}
The anti-spherical  Kazhdan--Lusztig polynomials for crystallographic Coxeter systems were first studied in the language of Hecke categories by Libedinsky--Williamson over the complex numbers  \cite{antiLW}.  
The authors remark that their localisation methods do not carry over to fields of positive characteristic and so they do not define $p$-Kazhdan--Lusztig polynomials (or light leaves bases) in their paper.  
 In this paper we make the (rather trivial) observation that one does not need such localisation methods in order to define  $p$-Kazhdan--Lusztig polynomials (or light leaves bases).  
 In \cref{lighteaves} we discuss under what circumstances one can restrict to $\mathbb Z \subset \CC$ and hence obtain light leaves bases by ``reduction modulo $p$". In this manner, we define the $p$-Kazhdan--Lusztig polynomials for arbitrary (parabolic) Coxeter systems.  
 
With this machinery in place, we note that Theorem B provides the first instance of a complete row/column of the (inverse) ($p$-)Kazhdan--Lusztig matrix to be calculated for  all Coxeter groups (and their parabolics).  
This is especially noteworthy considering just how difficult it is to calculate $p$-Kazhdan--Lusztig polynomials --- for example, the current state-of-the-art ``billiards conjecture'' of Jensen--Lusztig--Williamson describes an infinitesimally small region of  the $p$-Kazhdan--Lusztig matrix   in type ${A}_2 \backslash \widehat{A}_2$ \cite{MR3766576,2105.04665}.
 The only other examples of $p$-Kazhdan--Lusztig polynomials which are explicitly known are the famous ``torsion explosion" counter examples to the Lusztig conjecture \cite{w13}.  
 Therefore we expect our examples of (characteristic-free) $p$-Kazhdan--Lusztig polynomials to be of wide interest in their community,  
 as they provide the most general family of these polynomials calculated to date.

\section{The diagrammatic Hecke categories}    

We begin by recalling the basics of   diagrammatic Hecke categories.  
Almost everything from this section is  lifted from   Elias--Williamson's original paper \cite{MR3555156}  or is an extension of their results to the parabolic setting \cite{antiLW}.  

  \begin{rmk} 
The cyclotomic quotients of (anti-spherical) Hecke categories are small categories  
with finite-dimensional morphism spaces given by the light leaves basis of \cite{MR3555156,antiLW}.     
Working with such a category is equivalent working to with a locally unital algebra, as defined in \cite[Section 2.2]{BSBS}, see  \cite[Remark 2.3]{BSBS}.  
   Throughout this paper we will work in the latter setting.  
   The reader who prefers to think of categories can equivalently phrase everything in the this paper  in terms of categories and representations of categories.

   \end{rmk}

 \color{black} 
\subsection{Coxeter systems} 
Let $(W, S)$ be a Coxeter system:  $W$ is the group generated by the finite set $S$ subject to the relations $(\csigma\ctau)^{m_{\csigma\ctau}} = 1$ for  
 $ {m_{\csigma\ctau}}\in \NN\cup\{\infty\}$ 
 satisfying ${m_{\csigma\ctau}}= {m_{\ctau\csigma}}$ and that  ${m_{\csigma\ctau}}=1$ if and only if  $\csigma= \ctau\in S$.  
   Let $\ell : W \to \mathbb{N}$  be the corresponding length function. 
   Let $\mathcal{L} = \mathbb{Z}[\grade,\grade^{-1}]$ be the ring of Laurent polynomials with integer coefficients in one variable $\grade$.

Consider $S_P \subseteq S$ an arbitrary subset and $(P, S_P)$
 its corresponding Coxeter group. We say that $(P, S_P)$ is the parabolic subgroup corresponding to the set $S_P\subseteq S$.
We denote by $^ PW \subseteq W$ the set of minimal coset representatives in $P\backslash W$. 

Given an expression $\w=\sigma_1\sigma_2\cdots  \sigma_\ell$ for $\sigma_i\in W$ for $1\leq i \leq \ell$, we let $w$ be the corresponding element of $W$.  
     We  define a subword of  $\w$ to be a sequence
${\sf t}=(t_1,t_2,\dots ,t_\ell)\in\{0,1\}^\ell$ and we set 
 $\w^\stt  :=\sigma_1^{t_1}\sigma_2^{t_2}\cdots \sigma_\ell^{t_\ell}$ and we emphasise that $s_i^0=1_W  \in W$.   
 We write  $y\leq w$ if for some (or equivalently, every) 
reduced expression $\w$ there exists   a subword $\stt$ and a reduced expression  $\y$ such that $\w^\stt=\y$.  
  We let ${\sf exp}_P^\ell(w)$ denote the set of all expressions $\w=\sigma_1\sigma_2\cdots  \sigma_\ell$ of $w$ of length $\ell$ such that   $\sigma_1\dots\sigma_k \in {^PW}$ for each $1\leq k \leq \ell$, we let ${\sf exp}_P (w)=\cup_{\ell\geq 0}{\sf exp}_P^\ell(w)$, and 
$ {\sf exp}_P^{\ell}=\cup_{w \in W}{\sf exp}_P^\ell(w)$.    
 We set 
  ${\sf rexp}_P (w):= {\sf exp}_P^{\ell(w)} (w)$.

\subsection{Bi-coloured quantum numbers and Cartan matrices  }

We define the $x$- and $y$- bicoloured quantum numbers as follows.  First set 
\begin{equation}
[0]_x=[0]_y=0 \quad [1]_x=[1]_y=1 \quad [2]_x=x \quad [2]_y=y
\end{equation}
and then inductively define 
\begin{equation}
[2]_x[k]_y=[k+1]_x+[k-1]_x\qquad 
[2]_y[k]_x=[k+1]_y+[k-1]_y .
\end{equation}
 When $k$ is odd, $[k]_x=[k]_y$.  
The following  definition allows one to speak of Cartan matrices of  Coxeter groups.  

\begin{defn}
Let $\Bbbk$ be a complete local ring in which $2$ is invertible. A {\sf balanced Cartan matrix} of $(W,S)$ over $\Bbbk$ is an $|S| \times |S|$-matrix $(a_{\csigma \ctau})_{\csigma,\ctau \in S}$ such that
\begin{enumerate}
\item for all $\csigma \in S$ we have $a_{\csigma\csigma}=2$;
\item for all distinct $\csigma,\ctau  \in S$ such that $m_{\csigma \ctau}<\infty$, set $x=-a_{\csigma \ctau}$ and $y=-a_{\ctau \csigma}$.  
We require that 
\begin{equation} [m_{\csigma\ctau}]_x = [m_{\csigma\ctau}]_y = 0\qquad [m_{\csigma\ctau}-1]_x = [m_{\csigma\ctau}-1]_y = 1
\end{equation}
\end{enumerate}
\end{defn}
 
%

\subsection{Soergel graphs }
Let $(W,S)$ denote an arbitrary Coxeter system with $S$ finite.   
Given $\csigma\in S\cup \{\emptyset\}$ we define the {\sf monochrome Soergel generators} to be the framed graphs 
$$
{\sf 1}_{\emptyset }  =
\begin{minipage}{1.5cm} \begin{tikzpicture}[scale=1.2]
\draw[densely dotted,rounded corners](-0.5cm,-0.6cm)  rectangle (0.5cm,0.6cm);
\clip(0,0) circle (0.6cm);
\end{tikzpicture}
\end{minipage}
\quad
{\sf 1}_{\csigma }  = 
\begin{minipage}{1.5cm} \begin{tikzpicture}[scale=1.2]
\draw[densely dotted,rounded corners](-0.5cm,-0.6cm)  rectangle (0.5cm,0.6cm);
\clip(0,0) circle (0.6cm);
\draw[line width=0.06cm, magenta](0,-1)--(0,+1);
\end{tikzpicture}
\end{minipage}
\quad
   {\sf spot}_\csigma^\emptyset
  =
  \begin{minipage}{1.5cm}  
  \begin{tikzpicture}[scale=1.2]
\draw[densely dotted,rounded corners](-0.5cm,-0.6cm)  rectangle (0.5cm,0.6cm);

\clip(0,0) circle (0.6cm);
\draw[line width=0.06cm, magenta](0,-1)--(0,+0);
\fill[magenta] (0,0)  circle (5pt);
\end{tikzpicture}\end{minipage}
\quad 
{\sf fork}_{\csigma\csigma}^{\csigma}= 
   \begin{minipage}{2cm}  
%
\begin{tikzpicture}[scale=1.2]
\draw[densely dotted,rounded corners](-0.75cm,-0.5cm)
  rectangle (0.75cm,0.5cm);
  \clip (-0.75cm,-0.5cm)
  rectangle (0.75cm,0.5cm);
 \draw[line width=0.08cm, magenta](0,0)to [out=-30, in =90] (10pt,-15pt);
 \draw[line width=0.08cm, magenta](0,0)to [out=-150, in =90] (-10pt,-15pt);
 \draw[line width=0.08cm, magenta](0,0)--++(90:1);
 \end{tikzpicture}
\end{minipage}$$ and given any $\csigma,\ctau\in S $ with $m_{\sigma\tau}=m<\infty$ we have the {\sf bi-chrome generator}
$$
{\sf braid}_{\csigma\ctau }^{ \ctau\csigma }(m)
=
\begin{minipage}{5cm}
\begin{tikzpicture}[yscale=1.2,xscale=1.8]
\draw[densely dotted,rounded corners](-1.4cm,-0.6cm)
  rectangle (1.4cm,0.6cm);

   \draw[line width=0.06cm, cyan](0,0)to [out=180, in =-90] (-0.15+-1 ,0.6);
    \draw[line width=0.06cm , cyan](0,0)to [out=120, in =-90] (-0.15+-0.5 ,0.6);
   \draw[line width=0.06cm , magenta](0,0)to [out=105, in =-90] (-0.15+-0.25 ,0.6);   
    \draw[line width=0.06cm , magenta](0,0)to [out=150, in =-90] (-0.15+-0.75 ,0.6);

     \draw[line width=0.06cm, cyan](0,0)to [out=0, in =-90] (0.15+1 ,0.6);
    \draw[line width=0.06cm , cyan](0,0)to [out=60, in =-90] ( 0.15+ 0.5 ,0.6);
   \draw[line width=0.06cm , magenta](0,0)to [out=75, in =-90] ( 0.15+ 0.25 ,0.6);   
    \draw[line width=0.06cm , magenta](0,0)to [out=30, in =-90] ( 0.15+ 0.75 ,0.6);

     \draw[line width=0.06cm , cyan](0,0)to [out=-105, in=90] (-0.15+-0.25 ,-0.6);   
    \draw[line width=0.06cm , cyan](0,0)to [out=-150, in=90] (-0.15+-0.75 ,-0.6);

    \draw[line width=0.06cm , cyan](0,0)to [out=-75, in=90] ( 0.15+ 0.25 ,-0.6);   
    \draw[line width=0.06cm , cyan](0,0)to [out=-30, in=90] ( 0.15+ 0.75 ,-0.6);
     \draw[line width=0.06cm, magenta](0,0)to [out=-0, in=90] (0.15+1 ,-0.6);
    \draw[line width=0.06cm , magenta](0,0)to [out=-60, in=90] ( 0.15+ 0.5 ,-0.6);
    \draw[line width=0.06cm, magenta](0,0)to [out=-180, in=90] (-0.15+-1 ,-0.6);
    \draw[line width=0.06cm , magenta](0,0)to [out=-120, in=90] (-0.15+-0.5 ,-0.6);

   \draw(0,0.45) node {$\dots$}; 
      \draw(0,-0.45) node {$\dots$}; 
 \end{tikzpicture} \end{minipage}
 \qquad
 {\sf braid}_{\csigma\ctau }^{ \ctau\csigma }(m)
=
\begin{minipage}{5cm}
\begin{tikzpicture}[yscale=1.2,xscale=1.8]
\draw[densely dotted,rounded corners](-1.4cm,-0.6cm)
  rectangle (1.4cm,0.6cm);

        \draw[line width=0.06cm , cyan](0,0)to [out=-105, in=90] (-0.15+-0.25 ,-0.6);   
    \draw[line width=0.06cm , cyan](0,0)to [out=-150, in=90] (-0.15+-0.75 ,-0.6);

     \draw[line width=0.06cm, cyan](0,0)to [out=-0, in=90] (0.15+1 ,-0.6);
    \draw[line width=0.06cm , cyan](0,0)to [out=-60, in=90] ( 0.15+ 0.5 ,-0.6);

   \draw[line width=0.06cm, cyan](0,0)to [out=180, in =-90] (-0.15+-1 ,0.6);
    \draw[line width=0.06cm , cyan](0,0)to [out=120, in =-90] (-0.15+-0.5 ,0.6);

   \draw[line width=0.06cm , cyan](0,0)to [out=75, in =-90] ( 0.15+ 0.25 ,0.6);   
    \draw[line width=0.06cm , cyan](0,0)to [out=30, in =-90] ( 0.15+ 0.75 ,0.6);

     \draw[line width=0.06cm, magenta](0,0)to [out=0, in =-90] (0.15+1 ,0.6);
    \draw[line width=0.06cm , magenta](0,0)to [out=60, in =-90] ( 0.15+ 0.5 ,0.6);

\draw[line width=0.06cm , magenta](0,0)to [out=105, in =-90] (-0.15+-0.25 ,0.6);   
    \draw[line width=0.06cm , magenta](0,0)to [out=150, in =-90] (-0.15+-0.75 ,0.6);

     \draw[line width=0.06cm, magenta](0,0)to [out=-180, in=90] (-0.15+-1 ,-0.6);
    \draw[line width=0.06cm , magenta](0,0)to [out=-120, in=90] (-0.15+-0.5 ,-0.6);

    \draw[line width=0.06cm , magenta](0,0)to [out=-75, in=90] ( 0.15+ 0.25 ,-0.6);   
    \draw[line width=0.06cm , magenta](0,0)to [out=-30, in=90] ( 0.15+ 0.75 ,-0.6);
  
   \draw(0,0.45) node {$\dots$}; 
      \draw(0,-0.45) node {$\dots$}; 
 \end{tikzpicture} \end{minipage}
$$
for $m$ odd, or even respectively.  Here
 the northern edges     are  coloured with the sequence 
\begin{align*}  \underbrace{\ctau\csigma\ctau\csigma\dots \csigma\ctau }_{m \text{ times}} 
\qquad\underbrace{\ctau\csigma\ctau\csigma\dots \ctau\csigma}_{m \text{ times}} 
\end{align*}
for  $m$ odd or even respectively and the southern edges are coloured 
\begin{align*}
  \underbrace{\csigma\ctau\csigma\ctau\dots \ctau\csigma }_{m \text{ times}}
\quad
\underbrace{\csigma\ctau\csigma\ctau\dots \csigma\ctau}_{m \text{ times}}
\end{align*}
for  $m$ odd or even respectively.  We define the northern/southern reading word of a Soergel generator (or its dual) to be word in the alphabet $S$ obtained by reading the colours of the northern/southern edge of the frame respectively (ignoring any $\emptyset$ symbols).  
Pictorially, we define the duals of these generators to be the graphs obtained by reflection through their horizontal axes.  Non-pictorially, we simply swap the sub- and superscripts.  We   denote this duality by $\ast$.  For example, the dual of the fork generator is pictured as follows
\begin{equation}
  {\sf fork}^{\csigma\csigma}_{\csigma}=  
  \begin{minipage}{2cm}  
 \begin{tikzpicture}[scale=-1.2]
 \draw[densely dotted,rounded corners](-0.75cm,-0.5cm)
  rectangle (0.75cm,0.5cm);
  \clip (-0.75cm,-0.5cm)
  rectangle (0.75cm,0.5cm);
  \draw[line width=0.08cm, magenta](0,0)to [out=-30, in =90] (10pt,-15pt);
 \draw[line width=0.08cm, magenta](0,0)to [out=-150, in =90] (-10pt,-15pt);
 \draw[line width=0.08cm, magenta](0,0)--++(90:1);
 \end{tikzpicture}
\end{minipage}= ( {\sf fork}_{\csigma\csigma}^{\csigma})^\ast.
\end{equation}
 Given any two (dual) Soergel generators $D$ and $D'$ we define $D\otimes D'$ to be the diagram obtained by horizontal concatenation (and we extend this linearly).  The northern/southern colour sequence of $D\otimes D'$ is  the concatenation of those of $D$ and $D'$ ordered from left to right.    
Given any two (dual) Soergel generators, we define their product $D\circ D'$ to be the vertical concatenation of $D$ on top of $D'$ if the southern reading word of $D$ is equal to 
 the northern reading word of $D'$ and to be zero otherwise.  
Finally, we define a {\sf Soergel graph} to be any graph obtained by repeated  horizontal and vertical concatenation of the Soergel generators and their duals.

      \subsection{Some specific graphs} 
For $\w=\sigma_1\dots\sigma_\ell$ an expression, we define 
\begin{align}\label{itemmmmmmmmm}
 {\sf 1}_\w&={\sf1}_{\sigma_1}\otimes {\sf1}_{\sigma_2}\otimes \dots\otimes {\sf1}_{\sigma_\ell} 
\intertext{and given $k>1$ and $\csigma,\ctau\in S$ we set }
  {\sf 1}_{ \csigma\ctau} ^{k } & = 
{\sf 1}_{ \csigma}\otimes {\sf 1}_\ctau 
\otimes{\sf 1}_{ \csigma}\otimes {\sf 1}_\ctau \dots 
\end{align}
to be the alternately coloured idempotent on $k$ strands (so that the final strand is  $\csigma$- or $\ctau$-coloured if $k$ is odd or even respectively).     Given $\csigma,\ctau \in S$ with $m_{\csigma\ctau}=m$ even, let  
\begin{equation}\w=\rho_1\cdots \rho_k (\csigma\ctau\cdots\csigma \ctau) \rho_{m+k+1}\cdots \rho_\ell\qquad 
\underline{\w}=\rho_1\cdots \rho_k (\ctau\csigma\cdots  \ctau\csigma) \rho_{m+k+1}\cdots \rho_\ell
\end{equation}  
be two reduced expressions for $w\in W$.   We say that $\w$ and $\underline{\w}$ are {\sf adjacent} and we set 
\begin{equation}{\sf braid}^\w_{\underline{\w}}={\sf 1}_{\rho_1}\otimes \cdots \otimes {\sf 1}_{ \rho_k}
\otimes 
{\sf braid}^{\csigma\ctau}_{\ctau\csigma}(m) 
\otimes {\sf 1}_{\rho_{m+k+1}}\otimes  \cdots \otimes {\sf 1}_{ \rho_\ell}
\end{equation}
(similarly for $m$ odd).  
Now, given a sequence of adjacent reduced expressions, 
\begin{equation}\w=\w^{(1)}, \w^{(2)},\dots, \w^{(q)}=\underline{\w}
\end{equation} 
and the value $q$ is minimal such that 
this sequence exists, then we set 
\begin{equation}{\sf braid}^\w_{\underline{\w}}
=
\prod _{1\leq p < q}{\sf braid}^{\w^{(p)}}_{{\w^{(p+1)}} }
\in \mathcal{H}_W^\Bbbk .
\end{equation}
While this element is not uniquely defined,  
 only the minimality will  matter for our purposes.

\begin{eg}
\color{black}The left and righthand  diagrams depicted in \cref{Katya} 
are both of the form  ${\sf braid}^\w_{\underline{\w}}$
 for \begin{align}\w={\gam\al\gam{\bet\al\bet} {\gam\al\bet}} \quad \text{ and } \quad 
  \underline{\w} = {\bet\al\gam\bet\al\bet} .
  \end{align}  The   corresponding sequences 
 of adjacent reduced expressions are recorded in 
 the Zamolodchikov relation \cref{Zamolodchikov relations}.  
\end{eg} 
 
    \subsection{The diagrammatic Hecke categories }
   Let $(W,S)$ be a Coxeter system with a balanced Cartan matrix 
  $(a_{\csigma \ctau}) _{\csigma,\ctau \in S}$.  
  Suppose $\csigma,\ctau \in S$ with $m=m_{\csigma \ctau}<\infty$. 
In order to save space, we set 
\begin{equation}
 {\sf  jw} ^{\csigma\ctau\csigma}_{\csigma\csigma\csigma}= 
  ( {\sf 1}_\csigma\otimes {\sf spot}^\ctau_\emptyset \otimes  {\sf 1}_\csigma)  
({\sf fork}_{\csigma\csigma}^{\csigma}\otimes {\sf 1}_{\csigma}	)
\qquad
 {\sf   wj} _{\csigma\ctau\csigma}^{\csigma\csigma\csigma}= 
( {\sf 1}_{\csigma} \otimes {\sf fork}^{\csigma\csigma}_{\csigma} 	)
  ( {\sf 1}_\csigma \otimes {\sf spot}_\ctau^\emptyset \otimes  {\sf 1}_\csigma)  
\end{equation}
  We are now ready to inductively  define the  Jones--Wenzl projector  
  ${\sf JW}_{\csigma \ctau}^{2k+1}$ 
to be the element  
$$      {\sf JW}_{\csigma \ctau}^{2k}
 \otimes {\sf 1}_\csigma 
 +
 \tfrac{[2k-1]_x}{[2k]_x}
 ( {\sf JW}_{\csigma \ctau}^{2k}\otimes  {\sf 1}_\csigma )
(  {\sf 1}_{ \csigma\ctau}^{{2k-2} }\otimes {\sf jw} ^{\csigma\ctau\csigma}_{\csigma\csigma\csigma})
 ( {\sf JW}_{\csigma \ctau}^{2k-1}\otimes  {\sf 1}_{\csigma\csigma} )
(  {\sf 1}_{ \csigma\ctau }^{{2k-2} }\otimes {\sf wj} _{\csigma\ctau\csigma}^{\csigma\csigma\csigma}) ( {\sf JW}_{\csigma \ctau}^{2k}\otimes  {\sf 1}_\csigma )
$$
and the   Jones--Wenzl projector  
  ${\sf JW}_{\csigma \ctau}^{2k}$ to be the element  
$$
      {\sf JW}_{\csigma \ctau}^{2k-1}
 \otimes {\sf 1}_\ctau 
 +
 \tfrac{[2k-2]_x}{[2k-1]_x}
 ( {\sf JW}_{\csigma \ctau}^{2k-1}\otimes  {\sf 1}_\ctau )
({\sf 1}_{ \csigma\ctau}^{{2k-3}} \otimes {\sf jw} ^{ \ctau\csigma\ctau}_{\ctau\ctau\ctau})
 ( {\sf JW}_{\csigma \ctau}^{2k-2}\otimes  {\sf 1}_{\ctau\ctau} )
(  {\sf 1}_{ \csigma\ctau}^{{2k-3}}
 \otimes   {\sf wj} _{ \ctau\csigma\ctau}^{\ctau\ctau\ctau})
( {\sf JW}_{\csigma \ctau}^{2k-1}\otimes  {\sf 1}_\ctau ) . $$
  We remark that in each case the leftmost strand is coloured with 
  $\csigma$ and the second term  has coefficient equal to a ratio of $x$-bicoloured  quantum integers. 
  The pictorial version of the first recursion (for $2k+1$ odd) is 
  as follows
 $$\begin{minipage}{4.2cm}  
 \begin{tikzpicture}[scale=0.9]

\draw[cyan,line width=0.06cm,fill=cyan](1,-1)--++(90:1.85) circle (4pt);
\draw[cyan,line width=0.06cm,fill=cyan](1,4)--++(-90:1.85) circle (4pt);
\draw[magenta,line width=0.06cm](-2.5,-1)--++(90:5);
\draw[cyan,line width=0.06cm](-2,-1)--++(90:5);
\draw[cyan,line width=0.06cm](0,-1)--++(90:5);
\draw[magenta,line width=0.06cm](-1.5,-1)--++(90:5);

\draw(-0.75,3.85-0.2) node {$\mydots$};
\draw(-0.75,3.85-1.45+0.2) node {$\mydots$};
\draw(-0.75,3.85-1.5-1.85-0.2) node {$\mydots$};
\draw(-0.75,-1+0.15+0.2) node {$\mydots$};

 \draw[magenta,line width=0.06cm ](0.5,2.5-0.25) to  [in=90,out=-30] (1,2-0.25)--(1,1.25);
 \draw[magenta,line width=0.06cm ](1,1+0.25) to  [out=-90,in=150] (1.5,0.5+0.25);

\draw[magenta,line width=0.06cm](1.5,-1)--++(90:5);
\draw[magenta,line width=0.06cm](0.5,-1)--++(90:5);

\draw[very thick, rounded corners,fill=white] (-2.7,.125+1.7+0.5) rectangle (1.2+0.5,.125+1-0.5) node[pos=0.5] { ${\sf JW}_{\csigma\ctau}^{2k+1}$ };

 \end{tikzpicture}
 \end{minipage}
\; =\;\;
\begin{minipage}{4.2cm}  
 \begin{tikzpicture}[scale=0.9]

\draw[cyan,line width=0.06cm,fill=cyan](1,-1)--++(90:1.85) circle (4pt);
\draw[cyan,line width=0.06cm,fill=cyan](1,4)--++(-90:1.85) circle (4pt);
\draw[magenta,line width=0.06cm](-2.5,-1)--++(90:5);
\draw[cyan,line width=0.06cm](-2,-1)--++(90:5);
\draw[cyan,line width=0.06cm](0,-1)--++(90:5);
\draw[magenta,line width=0.06cm](-1.5,-1)--++(90:5);

\draw(-0.75,3.85-0.2) node {$\mydots$};
\draw(-0.75,3.85-1.45+0.2) node {$\mydots$};
\draw(-0.75,3.85-1.5-1.85-0.2) node {$\mydots$};
\draw(-0.75,-1+0.15+0.2) node {$\mydots$};


\draw[magenta,line width=0.06cm](1.5,-1)--++(90:5);
\draw[magenta,line width=0.06cm](0.5,-1)--++(90:5);

\draw[very thick, rounded corners,fill=white] (-2.7,.125+1.7+0.5) rectangle (1.2 ,.125+1-0.5) node[pos=0.5] { ${\sf JW}_{\csigma\ctau}^{2k}$ };

 \end{tikzpicture}
 \end{minipage}\! + \;  \frac{[2k-1]_x}{[2k]_x}\;   \begin{minipage}{4.2cm}  
 \begin{tikzpicture}[scale=0.9]

\draw[cyan,line width=0.06cm,fill=cyan](1,-1)--++(90:1.5) circle (4pt);
\draw[cyan,line width=0.06cm,fill=cyan](1,4)--++(-90:1.5) circle (4pt);
\draw[magenta,line width=0.06cm](-2.5,-1)--++(90:5);
\draw[cyan,line width=0.06cm](-2,-1)--++(90:5);
\draw[cyan,line width=0.06cm](0,-1)--++(90:5);
\draw[magenta,line width=0.06cm](-1.5,-1)--++(90:5);

\draw(-0.75,3.85) node {$\mydots$};
\draw(-0.75,3.85-1.45) node {$\mydots$};
\draw(-0.75,3.85-1.5-1.85) node {$\mydots$};
\draw(-0.75,-1+0.15) node {$\mydots$};

 \draw[magenta,line width=0.06cm ](0.5,2.5-0.25) to  [in=90,out=-30] (1,2-0.25)--(1,1.25);
 \draw[magenta,line width=0.06cm ](1,1+0.25) to  [out=-90,in=150] (1.5,0.5+0.25);

\draw[magenta,line width=0.06cm](1.5,-1)--++(90:5);
\draw[magenta,line width=0.06cm](0.5,-1)--++(90:5);

\draw[very thick, rounded corners,fill=white] (-2.7,3.7) rectangle (1.2,3) node[pos=0.5] { ${\sf JW}_{\csigma\ctau}^{2k}$ };

\draw[very thick, rounded corners,fill=white] (-2.7,.125+1.7) rectangle (1.2-0.5,.125+1) node[pos=0.5] { ${\sf JW}_{\csigma\ctau}^{2k-1}$ };

\draw[very thick, rounded corners,fill=white] (-2.7,3.7-3.7) rectangle (1.2,3-3.7) node[pos=0.5] { ${\sf JW}_{\csigma\ctau}^{2k}$ };

 \end{tikzpicture}.
 \end{minipage}$$
       The elements 
  ${\sf JW}_{  \ctau\csigma}^{2k }$ 
  and 
    ${\sf JW}_{  \ctau\csigma}^{2k+1 }$ are the same as the above except with the inverted colour pattern and coefficients equal to   $y$-bicoloured  quantum integers.  
   Specifically, we set $  {\sf JW}_{\ctau \csigma}^{2k+1}$ to be the element 
     $$
      {\sf JW}_{\ctau \csigma}^{2k}
 \otimes {\sf 1}_\ctau 
 +
 \tfrac{[2k-1]_y}{[2k]_y}
 ( {\sf JW}_{\ctau \csigma}^{2k}\otimes  {\sf 1}_\ctau )
(  {\sf 1}_{ \ctau\csigma}^{{2k-2} }\otimes {\sf jw} ^{\ctau\csigma\ctau}_{\ctau\ctau\ctau})
 ( {\sf JW}_{\ctau \csigma}^{2k-1}\otimes  {\sf 1}_{\ctau\ctau} )
(  {\sf 1}_{ \ctau\csigma }^{{2k-2} }\otimes {\sf wj} _{\ctau\csigma\ctau}^{\ctau\ctau\ctau}) ( {\sf JW}_{\ctau \csigma}^{2k}\otimes  {\sf 1}_\ctau )
  $$
and we set $  {\sf JW}_{\ctau \csigma}^{2k }$ to be the element 
  $$
      {\sf JW}_{\ctau \csigma}^{2k-1}
 \otimes {\sf 1}_\csigma 
 +
 \tfrac{[2k-2]_y}{[2k-1]_y}
 ( {\sf JW}_{\ctau \csigma}^{2k-1}\otimes  {\sf 1}_\csigma )
(  {\sf 1}_{ \ctau\csigma}^{{2k-3}}
 \otimes {\sf jw} ^{ \csigma\ctau\csigma}_{\csigma\csigma\csigma})
 ( {\sf JW}_{\ctau \csigma}^{2k-2}\otimes  {\sf 1}_{\csigma\csigma}  )
(  {\sf 1}_{ \ctau\csigma}^{{2k-3}}
 \otimes   {\sf  wj} _{ \csigma\ctau\csigma}^{\csigma\csigma\csigma})
( {\sf JW}_{\ctau \csigma}^{2k-1}\otimes  {\sf 1}_\csigma ) . $$
\color{black}
Finally, we define ${\sf JW}_{\csigma\ctau}$ and ${\sf JW}_{\ctau \csigma}$ to be the evaluation of the diagrams ${\sf JW}_{\csigma \ctau}^m$ and ${\sf JW}_{\ctau \csigma}^m$ respectively at $x=-a_{\csigma\ctau}$ and $y=-a_{\csigma \ctau}$.\footnote{Here we are using the fact that Jones--Wenzl projectors can be can be computed ``generically'' \cite[Theorem~6.13]{ew-localizedcalc}.}

\color{black} 
 
 
   \begin{defn}   \renewcommand{\vvv}{{\underline{w} }} 
\renewcommand{\w}{{\underline{x}}}
\renewcommand{\x}{{\underline{y}}}
\renewcommand{\y}{{\underline{z}}}
Let $\Bbbk$ be an arbitrary commutative ring.   
Let $(W,S)$ be a Coxeter system with a balanced Cartan matrix $(a_{\csigma \ctau})_{\csigma,\ctau \in S}$ over a commutative ring $\Bbbk$.  We define  $\mathcal{H}^\Bbbk_W$ to be the locally-unital  associative $\mathbb Z$-graded $\Bbbk$-algebra spanned by all Soergel-graphs, with duality $\ast$, and  multiplication given by vertical concatenation of diagrams modulo the following local relations and their duals.   
\begin{align}
{\sf 1}_{\csigma} {\sf 1}_{\ctau}& =\delta_{\csigma,\ctau}{\sf 1}_{\csigma} & {\sf 1}_{\emptyset} {\sf 1}_{\csigma} & =0 & {\sf 1}_{\emptyset}^2& ={\sf 1}_{\emptyset}\\
{\sf 1}_{\emptyset} {\sf spot}_{\csigma}^\emptyset {\sf 1}_{\csigma}& ={\sf spot}_{\csigma}^{\emptyset} & {\sf 1}_{\csigma} {\sf fork}_{\csigma\csigma}^{\csigma} {\sf 1}_{\csigma\csigma}& ={\sf fork}_{\csigma\csigma}^{\csigma} &
{\sf 1}_{\ctau\csigma}^m {\sf braid}_{\csigma\ctau}^{\ctau\csigma}(m) {\sf 1}_{\csigma\ctau}^m & ={\sf braid}_{\csigma\ctau}^{\ctau\csigma}(m)
\end{align}
For each  $\csigma \in S $  we have  monochrome relations 
\begin{align}
({\sf spot}_\csigma^\emptyset \otimes {\sf 1}_\csigma){\sf fork}^{\csigma\csigma}_{\csigma}
=
{\sf 1}_{\csigma}
 \qquad\quad  
  ({\sf 1}_\csigma\otimes {\sf fork}_{\csigma\csigma}^{ \csigma} )
({\sf fork}^{\csigma\csigma}_{\csigma}\otimes {\sf 1}_{\csigma})
=
{\sf fork}^{\csigma\csigma}_{\csigma}
{\sf fork}^{\csigma}_{ \csigma\csigma}
\end{align}
\begin{align}{\sf fork}_{\csigma\csigma}^{\csigma}
{\sf fork}^{\csigma\csigma}_{\csigma}=0 \qquad \quad ({\sf spot}_\csigma^\emptyset {\sf spot}^\csigma_\emptyset)\otimes {\sf 1}_{\csigma}
+
 {\sf 1}_{\csigma} \otimes ({\sf spot}_\csigma^\emptyset {\sf spot}^\csigma_\emptyset)
 =
2 ({\sf spot}^\csigma_\emptyset {\sf spot}_\csigma^\emptyset)
\end{align}
For every ordered  pair  $(\csigma,\ctau) \in S^2$ with $\csigma \neq \ctau$,   the   bi-chrome relations:  The two-colour barbell, 
\begin{align}({\sf spot}_\ctau^\emptyset {\sf spot}^\ctau_\emptyset)\otimes {\sf 1}_{\csigma}
-
 {\sf 1}_{\csigma} \otimes ({\sf spot}_\ctau^\emptyset {\sf spot}^\ctau_\emptyset)
 = a_{\csigma \ctau} (({\sf spot}^\csigma_{\emptyset} {\sf spot}_\csigma^\emptyset )
   -
 {\sf 1}_{\csigma} \otimes ({\sf spot}_\csigma^\emptyset {\sf spot}^\csigma_\emptyset))
.\end{align}
 If $m=m_{\csigma\ctau}<\infty$ we also have  the fork-braid relations 
\begin{align}
 {\sf braid}_{\csigma \ctau\cdots \ctau \csigma}^{\ctau\csigma\cdots \csigma\ctau} 
({\sf fork}^{\csigma}_{\csigma\csigma}\otimes{\sf 1}_{ \ctau\csigma} ^{m-1} 	)
 ( {\sf 1}_\csigma \otimes{\sf braid}^{  \csigma \ctau\cdots \ctau \csigma}_
 {  \ctau\csigma\cdots \csigma\ctau} )
=
   ({\sf 1}_{ \ctau\csigma}^{m-1}   \otimes {\sf fork}_{\ctau\ctau}^{ \ctau})
(
 {\sf braid}_{ \csigma \ctau\cdots \ctau \csigma}^{  \ctau\csigma\cdots \csigma\ctau} 
\otimes {\sf 1}_\ctau) 
\\
 {\sf braid}_{\csigma \ctau\cdots  \csigma \ctau}^{\ctau\csigma\cdots \ctau \csigma} 
({\sf fork}^{\csigma}_{\csigma\csigma}\otimes{\sf 1}_{\ctau\csigma} ^{m-1} 	)
 ( {\sf 1}_\csigma \otimes{\sf braid}^{  \csigma \ctau\cdots  \csigma\ctau }_
 {  \ctau\csigma\cdots  \ctau\csigma} )
=
   ({\sf 1}_{ \ctau\csigma}^{m-1}   \otimes {\sf fork}_{\csigma\csigma}^{\csigma})
( 
  {\sf braid}_{  \csigma \ctau\cdots  \csigma \ctau}^{  \ctau\csigma\cdots \ctau \csigma} 
\otimes {\sf 1}_\csigma) 
\end{align} 
for $m$ odd and even, respectively.  
We   require the cyclicity  relation,  
\begin{align}
({\sf 1}_{ \ctau\csigma}^{m }\otimes ({\sf spot}_{\csigma}^\emptyset {\sf fork}^{\csigma}_{\csigma\csigma}))({\sf 1}_\ctau \otimes
 {\sf braid}^{\csigma\ctau}_{ \ctau\csigma}(m)  
\otimes {\sf 1}_\csigma) 
( (  {\sf fork}_{\ctau}^{\ctau\ctau} {\sf spot}^{\ctau}_\emptyset)\otimes 
{\sf 1}_{ \csigma \ctau}^{m })
&=
  {\sf braid}^{\ctau \csigma\dots \csigma \ctau}_{\csigma \ctau\cdots \ctau\csigma} 
\\
({\sf 1}_{ \ctau\csigma}^{m }\otimes( {\sf spot}_{\ctau}^\emptyset {\sf fork}^{\ctau}_{\ctau\ctau}))({\sf 1}_\ctau \otimes
 {\sf braid}^{\csigma\ctau}_{ \ctau\csigma}(m)  
\otimes {\sf 1}_\ctau) 
( (  {\sf fork}_{\ctau}^{\ctau\ctau} {\sf spot}^{\ctau}_\emptyset)\otimes 
{\sf 1}_{ \csigma \ctau}^{m })
&=
  {\sf braid}^{\ctau \csigma\dots \ctau \csigma}_{\csigma \ctau\cdots \csigma \ctau} .  
\end{align}
for $m$ odd or even, respectively.  
We have the Jones--Wenzl relations 
\begin{align} 
({\sf 1}^{m-1}_{\ctau\csigma} \otimes {\sf spot}_\ctau^\emptyset) {\sf braid}^{\ctau\csigma}_{  \csigma\ctau}(m)
&=
 ({\sf spot}^\emptyset_\csigma \otimes  {\sf 1}_{ \ctau\csigma   }^{m-1})
{\sf JW}_{\csigma\ctau}
\\
({\sf 1}^{m-1}_{\ctau\csigma} \otimes {\sf spot}_\csigma^\emptyset) {\sf braid}^{\ctau\csigma}_{  \csigma\ctau}(m)
&=
 ({\sf spot}^\emptyset_\csigma \otimes {\sf 1}_{ \ctau\csigma   }^{m-1})
{\sf JW}_{\csigma\ctau} 
\intertext{for $m$ odd or even, respectively.  
 For   $(\csigma, \ctau ,\crho)\in S^3$ 
with   $m_{\csigma \crho}=
m_{  \crho\ctau}=2$ and 
$m_{\csigma \ctau}=m$,  we have }
({\sf braid}_{\csigma\ctau}^{\ctau\csigma}(m) \otimes {\sf 1}_
\crho)
{\sf braid}^{\csigma \ctau \cdots \csigma  \crho}_{\crho\csigma \ctau \cdots \csigma}
&=
{\sf braid}^{\ctau \csigma\cdots \ctau \crho}_{\crho\ctau \csigma\cdots \ctau}
({\sf 1}_
\crho\otimes {\sf braid}_{\csigma\ctau}^{\ctau\csigma}(m) ).
\end{align}
We have the  three  Zamolodchikov relations:    for a type $A_3$ triple $\al,\bet, \crho\in S $ with $m_{\al\bet}=3=m_{\al\gam}$ and $m_{\bet\gam}=2$ we have  that 
  \begin{align}\begin{split}\label{Zamolodchikov relations}
  &{\sf braid}^{ \gam\al\gam \bet\al\gam}_{ \al\gam\al  \bet\al\gam    }    
  {\sf braid}^{ \al\gam  \al\bet\al \gam    }_{  \al\gam \bet\al\bet    \gam   }    
   {\sf braid}^{ \al\gam\bet \al \bet\gam  }_{ \al\bet\gam \al  \gam\bet   }  
   {\sf braid}^{   \al\bet	\gam\al\gam \bet   }_{ \al\bet \al\gam\al   \bet }   
 {\sf braid}^{  \al\bet	 \al  \gam\al \bet  } _{  \bet \al\bet  \gam\al \bet    }   
{\sf braid}^{\bet\al \bet\gam\al\bet     }_{ \bet\al \gam\bet \al\bet     } 
\\[5pt] = \;\;\;&  	{\sf braid}_{\gam\al\bet\gam \al\gam  }^{ \gam\al \gam\bet\al\gam  }  
  {\sf braid}^{\gam\al\bet \gam\al \gam }_{\gam\al\bet\al\gam\al} 
   {\sf braid}^{\gam\al\bet\al\gam\al}_{\gam\bet\al\bet\gam\al}
 {\sf braid}^{\gam\bet\al\bet\gam\al}_{\bet\gam\al\gam\bet\al}   
 {\sf braid} ^{\bet\gam\al\gam\bet\al} 
 _{\bet \al\gam\al\bet\al} 
  {\sf braid}^{\bet\al\gam\al\bet\al}_{\bet\al\gam\bet\al\bet} .
  \end{split}
\intertext{ For a  type $B_3$   triple $\al,\bet,\gam\in S$ such that 
$m_{\al\gam}=4$, $m_{\bet\gam}=2$, $m_{\al\bet}=3$, we have that 
}\begin{split}
  &  {\sf braid}_{\gam\al\gam {\al\bet\al}{\gam\al\bet}}^{\gam\al\gam{\bet\al\bet} {\gam\al\bet}} 
    {\sf braid}^{\gam\al\gam{\al\bet\al}{\gam\al\bet}}
    _ { \al\gam{\al\gam\bet\al}{\gam\al\bet}}
     {\sf braid} 
    ^ { \al\gam{\al\gam\bet\al}{\gam\al\bet}}
    _ { \al\gam \al\bet \gam \al \gam\al\bet} 
    {\sf braid}^{ \al\gam \al\bet \gam \al \gam\al\bet} _
    { \al\gam \al\bet  \al \gam\al \gam\bet} 
    {\sf braid}^ 
    { \al\gam \al\bet  \al \gam\al \gam\bet} 
_    { \al\gam \bet  \al \bet  \gam\al  \gam \bet} 
\\[4pt]  
& {\sf braid}^ 
     { \al\gam \bet  \al \bet  \gam\al   \gam\bet} _
  { \al  \bet \gam \al    \gam \bet  \al  \bet\gam} \times
   {\sf braid}^ 
  { \al  \bet \gam \al    \gam \bet  \al  \bet\gam} _
    { \al  \bet \gam \al    \gam   \al  \bet  \al \gam} 
 {\sf braid}^ 
     { \al  \bet \gam \al    \gam   \al  \bet  \al \gam} 
     _
         { \al  \bet   \al    \gam   \al \gam \bet  \al \gam} 
 {\sf braid}^ 
         { \al  \bet   \al    \gam   \al \gam \bet  \al \gam} 
         _
  {   \bet   \al  \bet  \gam   \al\gam\bet     \al \gam}   
 {\sf braid}^ 
      {   \bet   \al  \bet  \gam   \al\gam\bet     \al \gam}   
      _
  {   \bet   \al     \gam\bet   \al \bet  \gam   \al \gam}         
\\[7pt]   
   =\;\;\;& {\sf braid}  
    ^
      {      \gam   \al \gam  \bet	 \al   	 \bet 	 \gam    \al  \bet   }   
_
  {      \gam   \al \gam  \bet	 \al 	 \gam   	 \bet   \al  \bet   }   
    {\sf braid}  
    ^
    {      \gam   \al \gam  \bet	 \al 	 \gam   	 \bet   \al  \bet   }   
    _
        {      \gam   \al    \bet	\gam \al 	 \gam   	  \al  \bet    \al }   
  {\sf braid}  
    ^
         {      \gam   \al    \bet	\gam \al 	 \gam   	  \al  \bet    \al }   
         _
                 {      \gam   \al    \bet		  \al 	 \gam   	  \al 	\gam			 \bet    \al }   
  {\sf braid}  
    ^
                          {      \gam   \al    \bet		  \al 	 \gam   	  \al 	\gam			 \bet    \al }   
                          _
                 {      \gam      \bet		  \al    \bet	 \gam   	  \al 	\gam			 \bet    \al }                             
  {\sf braid}  
    ^
                {      \gam      \bet		  \al    \bet	 \gam   	  \al 	\gam			 \bet    \al }                             
_
                 {          \bet	 \gam  	  \al    \gam   \bet	  	  \al 	 		 \bet \gam	   \al }                                                 
 \\[4pt]     %
 & {\sf braid}  
    ^
    {          \bet	 \gam  	  \al    \gam   \bet	  	  \al 	 		 \bet \gam	   \al } 
_
    {          \bet	 \gam  	  \al    \gam     \al   \bet     \al 	 \gam	   \al }     \times
  {\sf braid}  
    ^
      {          \bet	 \gam  	  \al    \gam     		\al   \bet     \al 	 \gam	   \al }     
      _
          {    \bet	   \al    \gam     \al  \gam  	 	 \bet     \al 	 \gam	   \al }     
 {\sf braid}  
    ^
          {    \bet	   \al    \gam     \al  \gam  	 	 \bet     \al 	 \gam	   \al }     
          _
    {    \bet	   \al    \gam     \al   	 \bet   \gam  	   \al 	 \gam	   \al }               
 {\sf braid}  
    ^
         {    \bet	   \al    \gam     \al   	 \bet   \gam  	   \al 	 \gam	   \al }               
_
    {    \bet	   \al    \gam     \al   	 \bet     	   \al 	 \gam	   \al \gam 		}                        
 {\sf braid}  
    ^
        {    \bet	   \al    \gam     \al   	 \bet     	   \al 	 \gam	   \al \gam 		}      
_      {    \bet	   \al    \gam      	 \bet     	   \al 	 	 \bet      \gam	   \al \gam 		}      
       \end{split}                        
\end{align}
and for a type   $H_3$  
 triple $\al,\bet,\gam\in S$ such that 
$m_{\al\gam}=2$, $m_{\bet\gam}=5$, $m_{\al\bet}=3$, we have 
a final $H_3$  
 relation\footnote{To the authors' knowledge, this relation has not been explicitly determined (but  can be given more computing power). We invite the reader to either believe that this can be written down (as is now standard in this area) or to read all results in this paper ``modulo" any Coxeter group $W$ with  a parabolic subgroup of type~$H_3$.}, for which  we   refer to \cite[Definition 5.2]{MR3555156}. 
 Further,  we require the bifunctoriality relation
\begin{align}
 \big(({\sf D  }_1\circ	{\sf 1}_{ {\w}}   )\otimes  ({\sf D}_2 \circ {\sf 1}_{ {\x}}) \big)
\big(({\sf 1}_{\w} \circ {\sf D}_3)  \otimes ({\sf 1}_\x \circ{\sf D }_4)\big)
=
 ({\sf D}_1 \circ {\sf 1}_{{\w}} \circ {\sf D_3}) \otimes ({\sf D}_2 \circ {\sf 1}_{{\x}} \circ {\sf D}_4)
\end{align}
and the monoidal unit relation
\begin{align}
{\sf 1}_{\emptyset} \otimes {\sf D}_1={\sf D}_1={\sf D}_1 \otimes {\sf 1}_{\emptyset}
\end{align}
for all diagrams ${\sf D}_1,{\sf D}_2,{\sf D}_3,{\sf D}_4$ and all  words $\w,\x$.  
Finally, we require the (non-local) cyclotomic relation
\begin{align}
{\sf spot}^\emptyset_\al {\sf spot}^\al_\emptyset \otimes {\sf 1}_\vvv =0 \qquad \text{for  all  $\vvv\in {\sf exp}(w)$, $w\in W,$  and all $\al \in S$.  }
\end{align}

\renewcommand{\w}{{\underline{w}}}
\renewcommand{\vvv}{{\underline{z} }}
\renewcommand{\y}{{\underline{y}}}
\renewcommand{\x}{{\underline{x}}}

\noindent
The $\mathbb Z$-grading on the algebra $\mathcal{H}^\Bbbk_{  W}$ is defined on the generators (and their duals) as follows: 
\begin{align}
 {\sf deg}({\sf 1}_\emptyset)=0
 \quad
  {\sf deg}({\sf 1}_\al)=0
  \quad
  {\sf deg} ({\sf spot}^\emptyset_\al)=1
    \quad
  {\sf deg} ({\sf fork}^\al_{\al\al})=-1
    \quad
  {\sf deg} ({\sf braid}^{\al\bet}_{\bet\al}(m))=0
\end{align}
for $\al,\bet \in S$   arbitrary and $m\geq 2$.

 \end{defn}

\begin{rmk}
The   cyclotomic  relation  amounts to considering diagrammatic Soergel modules instead of diagrammatic Soergel bimodules, or   equivalently, to considering finite dimensional $\Bbbk$-modules rather than modules of finite rank over the polynomial ring, $R$, generated by the ``barbells", ${\sf spot}^\emptyset_\csigma {\sf spot}^\csigma_\emptyset$, for $\csigma \in S$.
If we omit the cyclotomic relation in the above definition of $\mathcal{H}_W$ we obtain a diagrammatic Bott--Samelson category $\mathcal{H}_{\rm BS}$ for $W$ (viewed as a locally unital associative $\Bbbk$-algebra).

Diagrammatic Bott--Samelson categories are normally defined using a reflection representation $\mathfrak{h}=(V, \{\alpha_{\csigma}^\vee : \csigma \in S\}, \{\alpha_{\csigma} : \csigma \in S\})$ of the Coxeter group $W$ called a {\sf realisation}. 
Our construction of $\mathcal{H}_{\rm BS}$ implicitly uses the {\sf universal realisation} of $W$ with respect to the balanced Cartan matrix $(a_{\csigma \ctau})_{\csigma,\ctau \in S}$, defined as follows. 
Abusing notation slightly, let $V^\ast$ be a free $\Bbbk$-module with basis
$\{\alpha_\csigma : \csigma \in S\}$, and let $V=(V^\ast)^\ast$.
For each $\csigma \in S$ define $\alpha_{\csigma}^\vee \in V$ by setting $\langle \alpha_{\csigma}^\vee, \alpha_{\ctau} \rangle = a_{\csigma \ctau}$ for all $\ctau \in S$.
The Coxeter group $W$ acts on $V^\ast$ via $\csigma(\beta)=\beta-\langle \alpha_{\csigma}^\vee, \beta\rangle \alpha_{\csigma}$ for all $\csigma \in S$ and $\beta \in V^\ast$.
If $\mathcal{H}_{\rm BS}'$ is the Bott--Samelson category for another realisation of $W$ with the same Cartan matrix, then there is a unique monoidal functor $\mathcal{H}_{\rm BS} \rightarrow \mathcal{H}_{\rm BS}'$, which descends to an isomorphism after taking cyclotomic quotients (cf.~\cite[Lemma~11.2]{MR3805034}).
\end{rmk} 
 
         \begin{figure}[ht!]
\begin{align*} 
 &\begin{minipage}{1.6cm}\begin{tikzpicture}[scale=0.75]
\draw[densely dotted](0,0)  circle (30pt);
\clip(0,0) circle (30pt);
\draw[line width=0.06cm, magenta, fill=magenta](0,0)--++(90:3.5) 
  circle (4pt);
\draw[line width=0.06cm, magenta, fill=magenta](0,0)--++(-30:3.5) 
  circle (4pt); 
  \draw[line width=0.06cm, magenta, fill=magenta](0,0)--++(-150:3.5) 
  circle (4pt);
  \draw[line width=0.06cm, cyan, fill=cyan](0,0)--++(-90:3.5) 
  circle (4pt);
    \draw[line width=0.06cm, cyan, fill=cyan](0,0)--++(150:3.5) 
  circle (4pt);
    \draw[line width=0.06cm, cyan, fill=cyan](0,0)--++(30:0.6) coordinate(hi)
 ;
    \draw[line width=0.06cm, cyan, fill=cyan] (hi)--++(90:1);
        \draw[line width=0.06cm, cyan, fill=cyan] (hi)--++(-30:1);
 ;
\end{tikzpicture}
\end{minipage}\;=\;
 \begin{minipage}{1.6cm}\begin{tikzpicture}[scale=0.75]
\draw[densely dotted](0,0)  circle (30pt);
\clip(0,0) circle (30pt);
\draw[line width=0.06cm, magenta, fill=magenta](0,0)--++(90:3.5) 
  circle (4pt);
\draw[line width=0.06cm, magenta, fill=magenta](0,0)--++(-30:3.5) 
  circle (4pt); 
  \draw[line width=0.06cm, magenta, fill=magenta](0,0)--++(-150:3.5) 
  circle (4pt);
   \draw[line width=0.06cm, magenta, fill=white](0,0.6)--(-30:0.6)
  coordinate (whip1)--(-150:0.6)  coordinate (whip2)--(0,0.6);   
      \draw[line width=0.06cm, magenta, fill=white](0,0.6) to [out=-45, in=105] (whip1)
    to [out=-165, in=-15] (whip2)to [out=75, in=-135] (0,0.6); 
  \path(0,0)--++(-90:3.5) 
  circle (4pt);
  \path(0,0)--++(150:3.5) 
  circle (4pt);
  \path(0,0)--++(30:0.6) coordinate(hi)
 ;
  \path (hi)--++(90:0.6)coordinate(hi3);
      \path (hi)--++(-30:0.6)coordinate(hi2);
 ;
\draw[line width=0.06cm, cyan] (hi3)--(0,0.6);
\draw[line width=0.06cm, cyan] (hi2)--(-30:0.6) to[out=150,in=-90] (0,0.6)--(hi3);
\draw[line width=0.06cm, cyan] (-30:0.6)--(0,-31pt);
\path(0,0)--++(150:31pt) coordinate(hello2);
\draw[line width=0.06cm, cyan] (0,0.6)--(hello2);
 \end{tikzpicture}
\end{minipage}
   &
 \begin{minipage}{1.6cm}\begin{tikzpicture}[xscale=0.75,yscale=-0.75]
\draw[densely dotted](0,0)  circle (30pt);
\clip(0,0) circle (30pt);
\draw[line width=0.06cm, cyan, fill=cyan](0,0)--++(90:3.5) 
  circle (4pt);
\draw[line width=0.06cm, cyan, fill=cyan](0,0)--++(-30:3.5) 
  circle (4pt); 
  \draw[line width=0.06cm, cyan, fill=cyan](0,0)--++(-150:0.7) 
  circle (4pt);
    \draw[line width=0.06cm, magenta, fill=magenta](0,0)--++(-90:3.5) 
  circle (4pt);
    \draw[line width=0.06cm, magenta, fill=magenta](0,0)--++(150:3.5) 
  circle (4pt);
    \draw[line width=0.06cm, magenta, fill=magenta](0,0)--++(30:3.5) coordinate(hi)
 ;
    \draw[line width=0.06cm, magenta, fill=magenta] (hi)--++(90:1);
        \draw[line width=0.06cm, magenta, fill=magenta] (hi)--++(-30:1);
 ;
 \end{tikzpicture}
\end{minipage}\;=\;
 \begin{minipage}{1.6cm}\begin{tikzpicture}[xscale=0.75,yscale=-0.75]\draw[densely dotted](0,0)  circle (30pt);
\clip(0,0) circle (30pt);
\draw[line width=0.06cm, cyan, fill=cyan]
(0,0.6)  circle (4pt)  --++(90:3.5) 
 ;
\draw[line width=0.06cm, cyan, fill=cyan]
(-30:0.6)   circle (4pt) --++(-30:3.5) 
; 
   \draw[line width=0.06cm, magenta, fill=magenta](0,0)--++(-90:3.5) 
  circle (4pt);
    \draw[line width=0.06cm, magenta, fill=magenta](0,0)--++(150:3.5) 
  circle (4pt);
    \draw[line width=0.06cm, magenta, fill=magenta](0,0)--++(30:3.5) coordinate(hi)
 ;
    \draw[line width=0.06cm, magenta, fill=magenta] (hi)--++(90:1);
        \draw[line width=0.06cm, magenta, fill=magenta] (hi)--++(-30:1);
 ;
\end{tikzpicture}\end{minipage}\; +
\;
  \begin{minipage}{1.6cm}\begin{tikzpicture}[xscale=0.75,yscale=-0.75]
\draw[densely dotted](0,0)  circle (30pt);
\clip(0,0) circle (30pt);
\path(0,0)--++(90:30pt) coordinate(N);
\path(0,0)--++(30:30pt) coordinate(NE);
\path(0,0)--++(-30:30pt) coordinate(SE);
\path(0,0)--++(-90:30pt) coordinate(S); 
\path(0,0)--++(-150:30pt) coordinate(SW); 
\path(0,0)--++(150:30pt) coordinate(NW);  
\draw[line width=0.06cm, magenta] (S) to [out=80, in=-20]  (NW);
\draw[line width=0.06cm, cyan] (SE) to [out=160, in=-100]  (N);
\draw[line width=0.06cm, magenta,fill=magenta] (NE)--++(-150:0.6) circle (4pt);
 \end{tikzpicture}
\end{minipage}
\end{align*}
\caption{The  fork-braid and Jones--Wenzl relations for $m_{\csigma\ctau}=3$. }
    \label{sahkdgkshjagdjkshfgsdjhk}
   \end{figure}

\begin{figure}[ht!] 
 \begin{minipage}{5cm}\begin{tikzpicture} [scale=0.75]
\draw[densely dotted, rounded corners=13mm](-2,1) rectangle (4,-5.5); 
 \clip[rounded corners=13mm](-2,1) rectangle (4,-5.5); 
\path(0,0) coordinate (origin);
\path(origin)--++(-30:2) coordinate (origin1);
\path(origin1)--++(-90:2) coordinate (origin2);
\path(origin2)--++(-150:2) coordinate (origin3);

\path(origin2)--++(-90:2) coordinate (originfake2);
\path(origin3)--++(-90:2) coordinate (originfake3);

\path(originfake2)--++(-30:1) 
 ;
  \path(originfake2)--++(-150:1) 
coordinate (hi3); 
      \path(originfake2)--++(150:1) coordinate (BLUE) 
;
   \path(originfake2)--++(-90:1) 
coordinate (hi2);
    \path(originfake2)--++(-150:1)  
 ;
    \path(originfake2)--++(30:1) coordinate (BLUE2) ;
 \path(originfake3)--++(-30:1) 
 ;
  \path(originfake3)--++(-150:1) 
coordinate (hi3); 
      \path(originfake3)--++(90:1) 
;
   \path(originfake3)--++(-90:1) 
coordinate (hi2);
    \path(originfake3)--++(150:1)
 ;
    \path(originfake3)--++(30:1) coordinate (BLUE3) ;

\draw[line width=0.08cm, magenta, magenta](origin)--++(-30:1) 
 ;
  \draw[line width=0.08cm, magenta, magenta](origin)--++(-150:1) 
coordinate (hi3); 
      \draw[line width=0.08cm, magenta, magenta](origin)--++(90:1) 
;
   \draw[line width=0.08cm, darkgreen, darkgreen](origin)--++(-90:1) 
coordinate (hi2);
    \draw[line width=0.08cm, darkgreen, darkgreen,rounded corners=2mm](origin)--++(150:1)--++(90:1)
;
    \draw[line width=0.08cm, darkgreen, darkgreen,rounded corners=2mm](origin)--++(30:1) --++(90:172);

\draw[line width=0.08cm, cyan, cyan](origin1)--++(-30:1) 
coordinate (hi4); 
\path(hi4)--++(30:1) coordinate (hi5);
  \draw[line width=0.08cm, cyan, cyan](origin1)--++(-150:1) 
coordinate(hi);
      \draw[line width=0.08cm, cyan, cyan,rounded corners](origin1)--++(90:3) 
;
   \draw[line width=0.08cm, magenta, magenta](origin1)--++(-90:1) 
;
    \draw[line width=0.08cm, magenta, magenta](origin1)--++(150:1) 
;
    \draw[line width=0.08cm, magenta, magenta,rounded corners=2mm](origin1)--++(30:1) --++(90:2);

\draw[line width=0.08cm, magenta, magenta,rounded corners=2mm](origin2)--++(-30:1) --++(-90:2)
; 
  \draw[line width=0.08cm, magenta, magenta](origin2)--++(-150:1) 
  
;
      \draw[line width=0.08cm, magenta, magenta](origin2)--++(90:1) 
;
   \draw[line width=0.08cm, darkgreen, darkgreen](origin2)--++(-90:1) to [out=-90,in=90] (BLUE3); 
;
    \draw[line width=0.08cm, darkgreen, darkgreen](origin2)--++(150:1) 
    to [out=150,in=-90] (hi2)
;
    \draw[line width=0.08cm, darkgreen, darkgreen](origin2)--++(30:1)
    coordinate (hi6)
    to [out=30,in=-90] (hi5) --++(90:3);

\path(hi6)--++(-30:1) coordinate (hi7);


\draw[line width=0.08cm, cyan, cyan]  (hi4) to [out=-30, in=90] (hi7)--++(-90:2.5); 

   \draw[line width=0.08cm, cyan, cyan,rounded corners =2mm](origin3)--++(-150:1) --++(-90:2)
;

\draw[line width=0.08cm, cyan, cyan,rounded corners =2mm](origin3)--++(-30:1) --(originfake2)  --++(-90:2)
;

      \draw[line width=0.08cm, cyan ](origin3)--++(90:1)to [out =90,in=-150] (hi)
;
   \draw[line width=0.08cm, magenta,  magenta](origin3)--++(-90:2) 
;
    \draw[line width=0.08cm, magenta, magenta](origin3)--++(150:1) 
    to [out=150, in=-150] (hi3) 
;
    \draw[line width=0.08cm, magenta, magenta](origin3)--++(30:1) ;

 \end{tikzpicture}
 \end{minipage}
\!\!\!\!=\;\;
\begin{minipage}{5cm}\begin{tikzpicture} [scale=-0.75]
\draw[densely dotted, rounded corners=13mm](-2,1) rectangle (4,-5.5); 
 \clip[rounded corners=13mm](-2,1) rectangle (4,-5.5); 
\path(0,0) coordinate (origin);
\path(origin)--++(-30:2) coordinate (origin1);
\path(origin1)--++(-90:2) coordinate (origin2);
\path(origin2)--++(-150:2) coordinate (origin3);

\path(origin2)--++(-90:2) coordinate (originfake2);
\path(origin3)--++(-90:2) coordinate (originfake3);

\path(originfake2)--++(-30:1) 
 ;
  \path(originfake2)--++(-150:1) 
coordinate (hi3); 
      \path(originfake2)--++(150:1) coordinate (BLUE) 
;
   \path(originfake2)--++(-90:1) 
coordinate (hi2);
    \path(originfake2)--++(-150:1)  
 ;
    \path(originfake2)--++(30:1) coordinate (BLUE2) ;
 \path(originfake3)--++(-30:1) 
 ;
  \path(originfake3)--++(-150:1) 
coordinate (hi3); 
      \path(originfake3)--++(90:1) 
;
   \path(originfake3)--++(-90:1) 
coordinate (hi2);
    \path(originfake3)--++(150:1)
 ;
    \path(originfake3)--++(30:1) coordinate (BLUE3) ;

\draw[line width=0.08cm, magenta, magenta](origin)--++(-30:1) 
 ;
  \draw[line width=0.08cm, magenta, magenta](origin)--++(-150:1) 
coordinate (hi3); 
      \draw[line width=0.08cm, magenta, magenta](origin)--++(90:1) 
;
   \draw[line width=0.08cm, cyan, cyan](origin)--++(-90:1) 
coordinate (hi2);
    \draw[line width=0.08cm, cyan, cyan,rounded corners=2mm](origin)--++(150:1)--++(90:1)
;
    \draw[line width=0.08cm, cyan, cyan,rounded corners=2mm](origin)--++(30:1) --++(90:172);

\draw[line width=0.08cm, darkgreen, darkgreen](origin1)--++(-30:1) 
coordinate (hi4); 
\path(hi4)--++(30:1) coordinate (hi5);
  \draw[line width=0.08cm, darkgreen, darkgreen](origin1)--++(-150:1) 
coordinate(hi);
      \draw[line width=0.08cm, darkgreen, darkgreen,rounded corners](origin1)--++(90:3) 
;
   \draw[line width=0.08cm, magenta, magenta](origin1)--++(-90:1) 
;
    \draw[line width=0.08cm, magenta, magenta](origin1)--++(150:1) 
;
    \draw[line width=0.08cm, magenta, magenta,rounded corners=2mm](origin1)--++(30:1) --++(90:2);

\draw[line width=0.08cm, magenta, magenta,rounded corners=2mm](origin2)--++(-30:1) --++(-90:2)
; 
  \draw[line width=0.08cm, magenta, magenta](origin2)--++(-150:1) 
  
;
      \draw[line width=0.08cm, magenta, magenta](origin2)--++(90:1) 
;
   \draw[line width=0.08cm, cyan, cyan](origin2)--++(-90:1) to [out=-90,in=90] (BLUE3); 
;
    \draw[line width=0.08cm, cyan, cyan](origin2)--++(150:1) 
    to [out=150,in=-90] (hi2)
;
    \draw[line width=0.08cm, cyan, cyan](origin2)--++(30:1)
    coordinate (hi6)
    to [out=30,in=-90] (hi5) --++(90:3);

\path(hi6)--++(-30:1) coordinate (hi7);


\draw[line width=0.08cm, darkgreen, darkgreen]  (hi4) to [out=-30, in=90] (hi7)--++(-90:2.5); 

   \draw[line width=0.08cm, darkgreen, darkgreen,rounded corners =2mm](origin3)--++(-150:1) --++(-90:2)
;

\draw[line width=0.08cm, darkgreen, darkgreen,rounded corners =2mm](origin3)--++(-30:1) --(originfake2)  --++(-90:2)
;

      \draw[line width=0.08cm, darkgreen ](origin3)--++(90:1)to [out =90,in=-150] (hi)
;
   \draw[line width=0.08cm, magenta,  magenta](origin3)--++(-90:2) 
;
    \draw[line width=0.08cm, magenta, magenta](origin3)--++(150:1) 
    to [out=150, in=-150] (hi3) 
;
    \draw[line width=0.08cm, magenta, magenta](origin3)--++(30:1) ;

 \end{tikzpicture}
 \end{minipage}
 \caption{The Zamolodchikov relation  for $A_3$.  } 
 \label{Katya}
\end{figure}

\begin{defn}
Given $S_P\subseteq S$       we define  the {\sf anti-spherical Hecke category}
$\mathcal{H}^\Bbbk_{P \backslash W} $ to be the quotient of $\mathcal{H}^\Bbbk_W$ by the homogeneous (non-local) relation
\begin{align}\label{annihilation}
{\sf 1}_\csigma \otimes {\sf 1}_\w=0
\end{align}
for all $\csigma \in S_P\subseteq S$ and $\w \in {\sf exp}(w)$ for $w\in W$.  
\end{defn}

\subsection{Parabolic light leaves tableaux  and  cellular bases 
} \label{lighteaves}
We now recall the combinatorics of cellular bases for diagrammatic Hecke categories. \color{black}
This is well known in the non-parabolic setting (see e.g.~\cite[\S 6.1]{MR3555156} or \cite[Chapter~10.4]{EMTW}); a good reference for this material in the parabolic setting is \cite[\S 5]{antiLW}. 
Our notation is closely analogous to that in previous work of the first and second authors (\cite{pathbasesBCHM} and \cite{cell4us2}) and Ryom--Hansen \cite{MR4057514}. 
In particular, we will use the language of tableaux (rather than words in the Coxeter generators) to describe the indexing sets for our cellular bases.  
We provide   extended examples after the definitions  see \cref{heresmyexample,heresmyexample2} 
(in particular, we highlight the role played by the parabolic in these examples).  
  \color{black}

We will consider certain truncations of $\mathcal{H}^\Bbbk_{P\backslash W}$, and 
to that end we define, for any  $w \in {^P}W$, a poset 
\begin{align}\mathcal{P}_{\leq  w} = \{  x \mid   x\in {^PW} \text { and }x\leq w\} 
\end{align}
 partially ordered by the  Bruhat order.  
  Fix $\w=\sigma_1\dots \sigma_\ell \in {\sf exp}_P^\ell(w)$  (not necessarily reduced).
Given $\stt$ a subword of $\w\in{\sf exp}_P^\ell(w)$,  we define  
$\Shape_{k}(\stt)=\sigma_1 ^{t_1} \sigma_2^{t_2} \dots \sigma_k^{t_k} \in W  $  for $1\leq k\leq \ell $.      
    In the non-parabolic case, the set of  {\sf tableaux of shape $ x$ and weight $\w$} will then be given by  
\begin{align}
   \Std_{\leq \w}(  x)=\{ {\sf t} \mid   \Shape_{\ell}(\stt) =x    \}      
\end{align}
 and we define  the set of {\sf parabolic   tableaux of shape $ x$ and weight $\w$} to be
\begin{align}\label{pfpfpfpfpfpf}
   \Std^P_{\leq \w}(  x)=\{ {\sf t} \mid   \Shape_{k}(\stt) \sigma_{k+1} \in {^PW} \text{ for }0\leq k <\ell \text{ and }  \Shape_{\ell}(\stt) = x\}\subseteq   \Std _{\leq \w}(  x).     
\end{align}
Finally, we take the union over all possible {\sf weights} to obtain the set of all parabolic tableaux   
\begin{align}
    \Std^P  _{\leq \ell}( x)= 
   \bigcup_{\begin{subarray}c w\in {^PW}
  \\
  \w \in  { \sf exp}_P^\ell(w)  
  \end{subarray}} \Std^P_{\leq \w}(  x) \text{,}
 \qquad 
  \Std^P   ( x)=  \bigcup_{\ell\geq 0}\Std^P  _{\leq \ell}( x) . 
  \end{align}
      Given $x<x\ctau$ and  $\stt \in \Std^P_{\leq \w }( x )$, we define 
\begin{align}\label{plusminus}
{\stt^+}=(t_1,\dots,t_q,1)\in \Std_{\leq \w\ctau}( x\ctau)\text{,}\qquad 
    {\stt^-}=(t_1,\dots,t_q,0)\in \Std_{\leq \w\ctau}( x ) 
    \end{align}
    and this will be the backbone of how we grow the cellular bases. 
We can decompose the diagrammatic anti-spherical Hecke category 
 in the following manner,
\begin{align}\mathcal{H}_{P\backslash W}^\Bbbk=\bigoplus _{
\begin{subarray} c 
\uvee \in   {\sf exp}_P(v) 
\\
\w \in   {\sf exp}_P(w)
\\
v,w \in {   W}
\end{subarray}
}{\sf 1}_\uvee \mathcal{H}_{P\backslash W}^\Bbbk{\sf 1}_\w
\end{align}
and hence regard this algebra as a locally unital associative algebra in the sense of 
\cite[Section 2.2]{BSBS}.  
Each one of these  finite-dimensional pieces  
 ${\sf 1}_\uvee \mathcal{H}_{P\backslash W}^\Bbbk{\sf 1}_\w$ is 
 the space of  morphisms between the Bott--Samelson objects labelled by $\uvee $ and $\w$.  
%
%
%
%
%
%

  Recall that given  $\w=\sigma_1\sigma_2\cdots  \sigma_\ell$  and 
${\sf t}=(t_1,t_2,\dots ,t_\ell)\in\{0,1\}^\ell$, we set 
 $\w^\stt  :=\sigma_1^{t_1}\sigma_2^{t_2}\cdots \sigma_\ell^{t_\ell}$.  
We define 
\begin{align}
    {\sf 1}_{\leq \w}
 =\sum_{\stt \in \{0,1\}^\ell }{\sf 1}_{\w^\stt } .
\end{align}
 \color{black}
In particular, we will wish to consider the   subalgebras
\begin{align} {\sf 1}_{ \leq \w }\mathcal{H}_{P\backslash W}^\Bbbk {\sf 1}_{ \leq \w}  
\end{align}
 in order to understand $\mathcal{H}_{P\backslash W}^\Bbbk$.  
 We  will   construct a spanning set of 
     $ {\sf 1}_{ \leq \w }\mathcal{H}_{P\backslash W}^\Bbbk {\sf 1}_{ \leq \w } $  %
in an iterative fashion. 
    For any fixed expression $\w \in {\sf exp}_P
   (w)$, we have an embedding 
 \begin{equation}\label{preimage}{\sf 1}_{\leq \w  } \mathcal{H}_{P\backslash W}^\Bbbk{\sf 1}_{\leq \w } \hookrightarrow 
({\sf 1}_{\leq \w} \otimes {\sf 1}_{\ctau}) \mathcal{H}_{P \backslash W}^\Bbbk ({\sf 1}_{\leq \w} \otimes {\sf 1}_{\ctau}) 
\end{equation} given by $D \mapsto D\otimes {\sf 1}_\ctau $.
Note that the image of this embedding lies inside an idempotent truncation of ${\sf 1}_{\leq \w\ctau } \mathcal{H}_{P\backslash W}^\Bbbk{\sf 1}_{\leq \w\ctau }$.
\color{black}
We consider each of these embeddings in turn (for all $\ctau \in S$) in order to provide the iterative construction of the ``anti-spherical light leaves" elements of $\mathcal{H}_{P\backslash W}^\Bbbk$.  

 We now inductively construct the light leaves basis. 
   For   $\stt \in \Std_{\leq 1_W}(1_W)$, we formally set $c_\stt=1_\emptyset$ and we recall that $\stt^+$ and $\stt^-$ are defined in \cref{plusminus}.   
 \color{black} 
 If   $  {y}\ctau    >  y  $, then for any $ \y\in {\sf rexp}_P
 (y)$, $\y^+\in {\sf rexp}_P
 (y\ctau)$, 
  $\y^-\in {\sf rexp}_P
  (y)$  and  $\stt \in \Std_{\leq  \w}(y)$  
 we define 
\begin{align}
c_{{\stt^+}}= {\sf braid}^{\y^+}_{\y\ctau} (c_{\stt } \otimes {\sf 1}_\ctau)\qquad 
c_{{\stt^-}}=
{\sf braid}^{\y^- }_{\y  } ( c_{\stt } \otimes {\sf spot}_\ctau^\emptyset  ).
\end{align}
 If $ {y} \ctau=y'< y   $, then  we let 
   $\y' \ctau$ be a reduced expression for $y$.  
   For any  $\y^+\in {\sf rexp}_P
    (y  )$, 
  $\y^-\in {\sf rexp}_P
  (y\ctau)$  and  $\stt \in \Std_{\leq  \w
   }(y'\ctau
    )$  
 we define 
\begin{align}
c_{{\stt^+}}= 
{\sf braid}^{\y^+}_{\y'  \ctau }
 ({\sf1}_{\y' }\otimes   {\sf fork}^{\ctau }_{\ctau\ctau})
 (c_{\stt } \otimes {\sf 1}_\ctau) 
\qquad 
c_{{\stt^-}}= 
{\sf braid}^{\y^-}_{\y'  }   ({\sf1}_{\y' }\otimes  {\sf spot}_\ctau^\emptyset   {\sf fork}^{\ctau }_{\ctau\ctau})
 (c_{\stt } \otimes {\sf 1}_\ctau). 
 \end{align}
 Fix a choice of reduced expression $\x$ for $x\in \mathcal{P}_{\leq  w}$  and construct elements $c_\sts, c_\stt$ for 
$\sts,\stt \in  \Std^P_{\leq w}( x)$. We set  $   c_{\sts\stt}:= (c_\sts ^\ast) c_\stt$.  

The definition of the anti-spherical Hecke category $\mathcal{H}^\Bbbk_{P \backslash W}$ is extremely general, making sense over arbitrary rings and Coxeter systems.
In order for it to be well behaved we will make the following (very mild) assumption.

\begin{assump}\label{assump:antispherLL}
The anti-spherical light leaves elements
\begin{equation}\label{basis}
\{c_{\sts\stt} \mid \sts,\stt\in \Std^P_{\leq w}( x) ,   x \in \mathcal{P}_{\leq  w}  \}
\end{equation}
are $\Bbbk$-linearly independent in ${\sf 1}_{\leq \w}\mathcal{H}^\Bbbk_{P \backslash W} {\sf 1}_{\leq \w}$.
\end{assump}

\begin{thm}[{\cite[Section 6.4]{MR3555156} and {\cite[Theorem 5.3]{antiLW}}}] \label{LEW1}
If \Cref{assump:antispherLL} holds, then the algebra 
${\sf 1}_{\leq \w}\mathcal{H}^\Bbbk_{P \backslash W} {\sf 1}_{\leq \w}$ is finite-dimensional  with graded cellular basis \eqref{basis}
with respect to the Bruhat ordering on 
$ \mathcal{P}_{\leq  w}$ and anti-involution $\ast$.  
For $\Bbbk$   a field, we have that ${\sf 1}_{\leq \w}\mathcal{H}^\Bbbk_{P \backslash W} {\sf 1}_{\leq \w}$ is quasi-hereditary.  
\end{thm}

\begin{proof}
In  the proof of {\cite[Theorem 5.3]{antiLW}} it is proven that 
\eqref{basis} always spans, so \Cref{assump:antispherLL} implies that it is in fact a basis.  
Cellularity  is not mentioned explicitly, but follows in a completely analogous fashion to {\cite[Section 6.4]{MR3555156}}.  
 The only point of the theorem which is not explicitly stated  in  {\cite[Section 6.4]{MR3555156}} and {\cite[Theorem 5.3]{antiLW}} 
 is that the algebra is quasi-hereditary.  However, this is immediate from the fact that each layer of the cellular basis contains (at least one) idempotent $c_{\sts\sts}={\sf 1}_\x$ for $\sts$ the unique tableau in $ \Std_{\leq \x}(x)\subseteq  \Std_{\leq \w}(x)$.  
 \end{proof}

\begin{rmk} We note that when \Cref{assump:antispherLL} does not hold, the analogue of Soergel's categorification theorem for $\mathcal{H}^\Bbbk_{P \backslash W}$ is false \cite[Theorem 6.2]{antiLW}.     
In this instance it is debatable whether $\mathcal{H}^\Bbbk_{P \backslash W}$ should even be called the ``anti-spherical Hecke category''!
The following result is our attempt to give a reasonably general condition for when \Cref{assump:antispherLL} holds.
\end{rmk}

\begin{thm}[{\cite[Theorem 5.3]{antiLW}}] \label{thm:univrealff}
Let $\mathcal{O}$ be a complete local ring in which $2$ is invertible, and let $(a_{\csigma \ctau}^{\mathcal{O}})_{\csigma,\ctau \in S}$ be a balanced Cartan matrix for $W$ over $\mathcal{O}$.
If the universal realisation for $W$ with respect to $(a_{\csigma \ctau}^{\mathcal{O}})_{\csigma,\ctau \in S}$ is faithful, then \Cref{assump:antispherLL} holds for $\mathcal{H}_{P \backslash W}^{\mathcal{O}}$, the anti-spherical Hecke  category defined over $\mathcal{O}$ with respect to $(a_{\csigma \ctau}^{\mathcal{O}})_{\csigma,\ctau \in S}$.
Moreover, if there is a ring homomorphism $\mathcal{O} \rightarrow \Bbbk$ such that $(a_{\csigma \ctau})_{\csigma,\ctau \in S}$ is the image in $\Bbbk$ of $(a_{\csigma \ctau}^{\mathcal{O}})_{\csigma,\ctau \in S}$, then \Cref{assump:antispherLL} holds for $\mathcal{H}^\Bbbk_{P \backslash W}$.
\end{thm}

\begin{proof}
  \Cref{assump:antispherLL} is preserved by base change, so the second statement follows immediately from the first.
The special case where $\mathcal{O}=\mathbb{R}$ and $(a_{\csigma \ctau}^{\mathcal{O}})_{\csigma,\ctau \in S}$ is the ``geometric'' Cartan matrix for $W$ over $\mathbb{R}$ is proved in \cite[Proposition 5.5]{antiLW}.
In fact this proof is valid for any universal realisation over a complete local ring in which the ``parabolic property'' \cite[2.3]{antiLW} holds, including faithful realisations.
\end{proof}

There are two incredibly important realisations upon which we now focus our attention.  

\begin{eg} \label{impottt}
Let $p$ be an odd prime.
\begin{enumerate}[leftmargin=*]
\item Let $\Bbbk$ be any field of characteristic $p$ and let $W$ be a crystallographic Coxeter group.  
Let $A$ be a generalised Cartan matrix for $W$, and let $(a_{\csigma\ctau})_{\csigma,\ctau \in S}$ be the image of $A$ over $\Bbbk$.
Set $\mathcal{O}=\mathbb{Z}_p$ take $(a_{\csigma \ctau}^{\mathcal{O}})_{\csigma,\ctau \in S}$ to be the image of $A$ over $\mathcal{O}$.
The universal realisation for $W$ with respect to $A$ is faithful (see e.g.~\cite[Chapter 3]{kac}), so the same holds true over $\mathcal{O}$.
Thus \Cref{assump:antispherLL} holds for $\mathcal{H}^\Bbbk_{P \backslash W}$ by \Cref{thm:univrealff}.

\item Let $\Bbbk$ be a field of characteristic $p$ which contains the algebraic integers $2\cos(\pi/m_{\csigma \ctau})$ for all $\csigma,\ctau \in S$, and define the ``geometric'' Cartan matrix by setting $a_{\csigma \ctau}=-2\cos(\pi/m_{\csigma \ctau})$.
As mentioned above, it is known that the universal realisation with respect to this Cartan matrix over $\mathbb{R}$ is faithful, so the same holds true over $\mathcal{O}=\mathbb{Z}_p[2\cos(\pi/m_{\csigma \ctau}) \mid \csigma,\ctau \in S]$. 
Thus \Cref{assump:antispherLL} holds for $\mathcal{H}^\Bbbk_{P \backslash W}$ by \Cref{thm:univrealff}.
\end{enumerate}
\end{eg}

\begin{rmk}\color{black}
Typically the realisations in \Cref{impottt} do \emph{not} satisfy the parabolic property over $\Bbbk$.
For example, suppose $W$ is an affine Weyl group, $(a_{\csigma\ctau})_{\csigma,\ctau \in S}$ is the (image in $\Bbbk$ of the) associated affine Cartan matrix, and $P$ is the finite Weyl subgroup. 
Then the corresponding realisation from \cref{impottt}(1) does not satisfy the parabolic property \cite[Lemma~1.11]{matrix-recursion}.  This is also discussed in \cite[Sections 2.3 and 3.1]{antiLW}.
\end{rmk}

\begin{rmk}
In particular, we note that if $\Bbbk$ is algebraically closed and of odd (or infinite) characteristic then \Cref{assump:antispherLL} holds for the geometric Cartan matrix (by \cref{{impottt}}(2)).
Thus the reader unfamiliar with realisations can focus on just this case.
\end{rmk}

\begin{rmk}
We have assumed that $p\neq 2$ as we do not wish to discuss the technicalities of Demazure surjectivity.  Demazure surjectivity sometimes fails for the natural and geometric realisations in characteristic 2 (even for crystallographic types). One can fix this technicality, but the details are tedious and are often glossed over entirely in the literature (see for example the $B_2$ and $C_2$ examples for $p=2$ \cite{MR3611719}).
\end{rmk}

    \begin{figure}  
    $$\begin{tikzpicture} [scale=0.42]

\path(-5,-2 )  coordinate (A)  ;
\path(A)--++(0:2.8)  coordinate (B)  ;
 \path(A)--++(60:2.8)  coordinate (C)  ;

\path(A)--++(45:28pt) coordinate (A1)  ;
\path(A)--++(-45:28pt) coordinate (A2)  ;
\path(A)--++(-135:28pt) coordinate (A3)  ;
\path(A)--++(135:28pt) coordinate (A4)  ;

\path(B)--++(45:28pt) coordinate (B1)  ;
\path(B)--++(-45:28pt) coordinate (B2)  ;
\path(B)--++(-135:28pt) coordinate (B3)  ;
\path(B)--++(135:28pt) coordinate (B4)  ;

\draw[very thick,rounded corners] (A1)--(B1)--(B2)--(A3)--(A4)--(A1);

\draw[very thick](A)--(B)--(C)--(A);
 \draw[fill=magenta,very thick] (C) circle (12pt); 
  \draw[fill=cyan,very thick] (B) circle (12pt); 
   \draw[fill=green!80!black,very thick] (A) circle (12pt); 
 
\draw[magenta, line width=3] (2,-2 )--(2,0.5) coordinate (hi0)  ;

\draw[cyan, line width=3] (hi0)--++(150:2.5)  coordinate (hi1);
\draw[green!80!black, line width=3] (hi0)--++(30:2.5)  coordinate (hi2);
\draw[green!80!black, line width=3] (hi1)--++(90:2.5)  coordinate (hi3);
\draw[cyan, line width=3] (hi2)--++(90:2.5)  coordinate (hi4);

\draw[magenta, line width=3] (hi3)--++(150:2.5)  coordinate (hi5);
\draw[cyan, line width=3] (hi3)--++(30:2.5)  coordinate (hi6);
\draw[green!80!black , line width=3] (hi4)--++(150:2.5)  coordinate (hi6);
\draw[magenta, line width=3] (hi4)--++(30:2.5)  coordinate (hi7);

\draw[cyan, line width=3] (hi5)--++(90:2.5)  coordinate (hi8);
\draw[magenta, line width=3] (hi6)--++(90:2.5)  coordinate (hi9);

 \draw[green!80!black, line width=3] (hi7)--++(90:2.5)  coordinate (hi10);

\draw[magenta, line width=3] (hi8)--++(30:2.5)  coordinate (hi11);
\draw[cyan, line width=3] (hi9)--++(150:2.5) coordinate (hi14); 

\draw[green!80!black, line width=3, 
 ] (hi8)--++(150:2.5) coordinate(hi13) ; 
\draw[magenta, line width=3, densely dashed
 ] (hi13)--++(90:2.5)  ;  
\draw[green!80!black, line width=3,   
 ] (hi14)--++(90:2.5)coordinate (hi15)   ;  
\draw[cyan, line width=3,   densely dashed
 ] (hi15)--++(30:2.5)   ;  
\draw[magenta, line width=3,   densely dashed
 ] (hi15)--++(150:2.5)   ;

\draw[green!80!black , line width=3] (hi9)--++(30:2.5)  coordinate (hi12);
\draw[magenta, line width=3] (hi10)--++(150:2.5) ;

\draw[cyan, line width=3,   
 ] (hi12)--++(90:2.5)coordinate (hi16)   ;  
\draw[magenta, line width=3,   densely dashed
 ] (hi16)--++(30:2.5)   ;  
\draw[green!80!black, line width=3,   densely dashed
 ] (hi16)--++(150:2.5)   ;

\draw[cyan, line width=3,    
 ] (hi10)--++(30:2.5) coordinate(hi17)  ;  
\draw[magenta, line width=3,   densely dashed
 ] (hi17)--++(90:2.5)   ;

  \foreach \i in {0,1,2,3,4,5,...,17}
{
 \fill (hi\i)circle (7pt);  }
\fill(2,0.5)circle (7pt);
\fill(2,-2)circle (7pt);

\end{tikzpicture}$$  
\caption{\color{black}The Coxeter and weak Bruhat graphs for $\mathfrak{S}_3\leq \widehat{\mathfrak{S}}_3$.  The weak  Bruhat graph can be thought of as a hexagonal-tiling of a sixth of the plane. }
\label{bruhhhhhat}
\end{figure}    

 Fix   $x  \in {^P}W$ and $\x$ a reduced word.     
 We define  right ``cell" ideals 
\begin{align}
\mathcal{H}_{P\backslash W}^{ \leq  \x  }  = {\sf1}_{	\leq \x	} \mathcal{H}_{P\backslash W}^\Bbbk  
 &&
\mathcal{H}_{P\backslash W}^{< \x}  =
    \mathcal{H}_{P\backslash W}^{ \leq  \x }   \cap \ZZ \{ c_{\sts  \stt}  \mid 
 \sts,\stt \in \Std^P
 ( y ), y <   x \} .  
 \end{align}
We    define the   {\sf standard}     $\mathcal{H}_{P\backslash W}^\Bbbk$-module, $\Delta_{\mathcal{H}_{P\backslash W}}(x)$,  
as follows: 
\begin{equation}  \label{identification}
        \Delta_{\mathcal{H}_{P\backslash W}} ( x) =  \mathcal{H}_{P\backslash W}^{ \leq  \x  }  / \mathcal{H}_{P\backslash W}^{< \x}  = \{ c_\sts  + \mathcal{H}_{P\backslash W}^{< \x}   
       \mid \sts \in \Std^P ( x)\} . \end{equation}
We will almost always drop the subscript and simply write
$ \Delta (x) := \Delta_{\mathcal{H}_{P\backslash W}}  (x)$.  
    We recall that the   cellular structure allows us to define, for each
 $ x \in {^PW}$,  a bilinear form 
  $\langle\ ,\ \rangle^{ x} $ 
   on $\Delta(x) $    which
is determined by
\begin{equation}\label{geoide}
   c_{\sts \stt}  c_{\stu \stv}\equiv
  \langle c _\stt,c _\stu \rangle^ x  c_{\sts\stv}
  \pmod{ \mathcal{H}_{P\backslash W}^{< \x}} 
  \end{equation}
for any $\sts,\stt,\stu,\stv \in \Std^P (x  )$.  
When $\Bbbk$ is a field,  we obtain a complete set of non-isomorphic simple
  $\mathcal{H}_{P\backslash W}^\Bbbk$-modules 
  $ L  ( x) $  for $x \in {^PW} $  via exact sequences 
  as follows:
\begin{align}\label{fuckoffre}
 0\to 
  \rad(	\langle\ ,\ \rangle^{ x} 			 ) \rightarrow  \Delta  ( x) \to L  ( x)  \to 0.
\end{align}
 We will not discuss  projective indecomposable $\mathcal{H}_{P\backslash W}^\Bbbk$-modules\footnote{``That whereof we cannot speak, thereof we must remain silent", Wittgenstein.}.
The right  $1_{\leq \w} \mathcal{H}_{P\backslash W}^\Bbbk1_{\leq \w}  $-modules
 \begin{align}
 \Delta_{\leq \w}(x):= \Delta (x){\sf 1}_{\leq \w}\qquad 
L_{\leq \w}(x):= L (x){\sf 1}_{\leq \w}
 \end{align}
for $x \in \mathcal{P}_{\leq \w}$ provide complete sets of non-isomorphic standard and simple modules respectively.  
 The projective indecomposable $1_{\leq \w} \mathcal{H}_{P\backslash W}^\Bbbk1_{\leq \w}  $-modules $P_{\leq w} ( x)$ are the direct summands 
\begin{align}
1_{\leq\w}  \mathcal{H}_{P\backslash W}^\Bbbk1_{\leq \w}  = 
\bigoplus_{ x\leq  w	}
\dim_v( L_{\leq w}(x))  P_{\leq w} (x). 
\end{align}
    
  \color{black}

\begin{assump} 
For the remainder of the paper, we will assume that $\Bbbk$ is a field of characteristic $p\neq 2$.   
\end{assump}
 \color{black}


\begin{eg}\label{heresmyexample}
\color{black}
Let $W$ be the affine symmetric group $\widehat{\mathfrak{S}}_3$ with generators 
$\color{magenta}s_1$, $\color{cyan}s_2$, $\color{green!80!black}s_3 $
and let $P$ be the maximal finite parabolic generated by 
 $\color{cyan}s_2$ and  $\color{green!80!black}s_3$.  The Coxeter graph and Bruhat graphs are depicted in \cref{bruhhhhhat}.  
For $\w= \color{magenta}s_1 \color{cyan}s_2 \color{green!80!black}s_3\color{magenta}s_1 $, the algebra 
 $ {\sf 1}_{ \leq \w }\mathcal{H}_{P\backslash W}^\Bbbk {\sf 1}_{ \leq \w } $ has graded dimension
 $$1^2+(\grade +1)^2+(\grade +1)^2+(\grade +1)^2+(\grade ^2+4\grade +3+\grade ^{-1})^2+(2\grade ^2+3\grade +1)^2 $$
where the sum is over the squares of the graded dimensions of the standard modules, which have labels
$$
\color{magenta}s_1 \color{cyan}s_2 \color{green!80!black}s_3\color{magenta}s_1 
\qquad
\color{magenta}s_1 \color{cyan}s_2 \color{green!80!black}s_3
\qquad
\color{magenta}s_1 \color{cyan}s_2
\qquad
\color{magenta}s_1  \color{green!80!black}s_3
\qquad
\color{magenta}s_1  
\qquad
 \color{black}
1_{P\backslash W}$$respectively.  For example, the basis of the $9$-dimensional standard module has basis  
\begin{align}\begin{split}
\label{9/}
\begin{tikzpicture}[yscale=1,xscale=1]
 \draw[densely dotted,rounded corners](-0.25,0)
  rectangle (1.75,1.4);
   \draw[line width=0.06cm, magenta](0,0) --++(90:1.4);
   \draw[line width=0.06cm, cyan,fill= cyan](0.5,0) 
  --++(90:0.55) circle (2.5pt);    \draw[line width=0.06cm, green!80!black,fill=green!80!black](1,0) --++(90:0.55) circle (2.5pt);
 \end{tikzpicture}
 \qquad\qquad
  \begin{tikzpicture}[yscale=1,xscale=1]
 \draw[densely dotted,rounded corners](-0.25,0)
  rectangle (1.25,1.4);   
   \draw[line width=0.06cm, magenta](0,0) --++(90:1.4);
      \draw[line width=0.06cm, green!80!black,fill=green!80!black](0.5,0) 
  --++(90:0.55) circle (2.5pt);  
 \end{tikzpicture}  
 \qquad\qquad
  \begin{tikzpicture}[yscale=1,xscale=1]
 \draw[densely dotted,rounded corners](-0.25,0)
  rectangle (1.25,1.4);    
   \draw[line width=0.06cm, magenta](0,0) --++(90:1.4);
      \draw[line width=0.06cm, cyan,fill= cyan](0.5,0) 
  --++(90:0.55) circle (2.5pt);
  \end{tikzpicture}
 \qquad\qquad
  \begin{tikzpicture}[yscale=1,xscale=1]
\draw[densely dotted,rounded corners](-0.25,0)
  rectangle (0.75,1.4);
   \draw[line width=0.06cm, magenta](0,0) --++(90:1.4); 
 \end{tikzpicture}  
 \qquad\qquad
  \begin{tikzpicture}[yscale=1,xscale=1]
\draw[densely dotted,rounded corners](-0.25,0)
  rectangle (0.75,1.4); 
   \draw[line width=0.06cm, magenta,fill=magenta](0,0)   --++(90:0.55) circle (2.5pt);
     \draw[line width=0.06cm, magenta](0.5,0) --++(90:1.4);
 \end{tikzpicture}  
\\
  \begin{tikzpicture}[yscale=1,xscale=1]
\draw[densely dotted,rounded corners](-0.25,0)
  rectangle (1.75,1.4);
   \draw[line width=0.06cm, magenta](0,0) --++(90:1.4);
   \draw[line width=0.06cm, cyan,fill= cyan](0.5,0) 
  --++(90:0.55) circle (2.5pt);    \draw[line width=0.06cm, green!80!black,fill=green!80!black](1,0) --++(90:0.55) circle (2.5pt);
   \draw[line width=0.06cm, magenta](1.5,0)--++(90:0.2) to [out=90,in=0](0,1);
 \end{tikzpicture}  
   \qquad\qquad
 \begin{tikzpicture}[yscale=1,xscale=1]
\draw[densely dotted,rounded corners](-0.25,0)
  rectangle (1.25,1.4);
    \draw[line width=0.06cm, magenta](1,0)--++(90:0.2) to [out=90,in=0](0,1); who 
   \draw[line width=0.06cm, magenta](0,0) --++(90:1.4);
      \draw[line width=0.06cm, green!80!black,fill=green!80!black](0.5,0) 
  --++(90:0.55) circle (2.5pt);
  \end{tikzpicture}  \qquad\qquad
 \begin{tikzpicture}[yscale=1,xscale=1]
\draw[densely dotted,rounded corners](-0.25,0)
  rectangle (1.25,1.4);
    \draw[line width=0.06cm, magenta](1,0)--++(90:0.2) to [out=90,in=0](0,1);
   \draw[line width=0.06cm, magenta](0,0) --++(90:1.4);
      \draw[line width=0.06cm, cyan,fill= cyan](0.5,0) 
  --++(90:0.55) circle (2.5pt);
 \end{tikzpicture}
  \qquad\qquad
 \begin{tikzpicture}[yscale=1,xscale=1]
\draw[densely dotted,rounded corners](-0.25,0)
  rectangle (0.75,1.4);
    \draw[line width=0.06cm, magenta](0.5,0) to [out=90,in=0](0,0.8);
   \draw[line width=0.06cm, magenta](0,0) --++(90:1.4);
 \end{tikzpicture}  
 \qquad\qquad
\begin{tikzpicture}[yscale=1,xscale=1]
\path(-0.25,0)
  rectangle (0.75,1.4);  
 \end{tikzpicture}  
 \end{split}
\end{align}

\end{eg}

\color{black}
We have that $\mathcal{H}_{P\backslash W}$ is the quotient of 
$\mathcal{H}_W$ by the 2-sided ideal 
\begin{align}
J= \langle 
{\sf 1}_\csigma \otimes {\sf 1}_\w \mid\csigma \in S_P, 
 \w \in {\sf exp}(w)   \text{ for $w\in W$}\rangle.
 \end{align}
We have a functor
\begin{align}\begin{split}
&\pi :\mathcal{H}_W{\rm -mod} \to  \mathcal{H}_{P\backslash W}{\rm -mod}\\
&\pi :M \mapsto M/ JM.
\end{split}
\end{align}
We have that 
\begin{align}
\pi (\Delta_{\mathcal{H}_W}(x))=
\begin{cases}
 \Delta_{\mathcal{H}_{P\backslash W}}(x) 
&\text{for $x \in {^P}W$} \\
0 & \text{otherwise}\\
\end{cases}
\quad
\pi ( L_{\mathcal{H}_W}(x) )=
\begin{cases}
 L_{\mathcal{H}_{P\backslash W}}(x) 
& \text{for $x \in {^P}W$} \\
0 & \text{otherwise}\\
\end{cases}
\end{align} 
This is simply because every basis element of the standard module  is killed by taking this quotient (this is mentioned explicitly in the proof of \cite[Theorem~5.3]{antiLW}).

\color{black}


\color{black}
\begin{eg}\label{for-the-ref} 
We continue   with \cref{heresmyexample}, \color{black}we note that 
$\color{magenta}s_1 \color{cyan}s_2 \color{black} \in {^P}W$ and that   $\color{magenta}s_1 \color{cyan}s_2\color{magenta}s_1 \color{black}\in P$.  
 For those unfamiliar with   anti-spherical  light leaves, we now illustrate the manner in which a basis element of $ \mathcal{H}_{   W}^\Bbbk$ 
will die when we take the quotient $ \mathcal{H}_{P\backslash W}^\Bbbk$.  
 We have that the  ${\sf 1}_{\leq \color{magenta}s_1 \color{cyan}s_2 \color{green!80!black}s_3\color{magenta}s_1 } \mathcal{H}_{  W}^\Bbbk{\sf 1}_{\leq \color{magenta}s_1 \color{cyan}s_2 \color{green!80!black}s_3\color{magenta}s_1 } $-module 
 $
\Delta_{\leq \color{magenta}s_1 \color{cyan}s_2 \color{green!80!black}s_3\color{magenta}s_1 }(\color{magenta}s_1 \color{cyan}s_2 \color{black} )	  
  $ is $(\grade^2+2\grade+1)$-dimensional and spanned by the diagrams
\begin{equation}\label{ghghghghg}
\begin{minipage}{2cm} \begin{tikzpicture}[yscale=1,xscale=1]
\draw[densely dotted,rounded corners](-0.25,0)
  rectangle (1.75,1.4);
 
   \draw[line width=0.06cm, magenta](0,0) --++(90:1.4);
   \draw[line width=0.06cm, cyan,fill= cyan](0.5,0) 
  --++(90:1.4);
    \draw[line width=0.06cm, green!80!black,fill=green!80!black](1,0) --++(90:0.55) circle (2.5pt);

   \draw[line width=0.06cm, magenta,fill=magenta](1.5,0)--++(90:0.55) circle (2.5pt);  
 \end{tikzpicture}  \end{minipage}
 \qquad
\begin{minipage}{1.5cm} \begin{tikzpicture}[yscale=1,xscale=1]
\draw[densely dotted,rounded corners](-0.25,0)
  rectangle (1.25,1.4);
 
   \draw[line width=0.06cm, magenta](0,0) --++(90:1.4);
   \draw[line width=0.06cm, cyan,fill= cyan](0.5,0) 
  --++(90:1.4);


   \draw[line width=0.06cm, magenta,fill=magenta](1,0)--++(90:0.55) circle (2.5pt);  
 \end{tikzpicture}  \end{minipage}
 \qquad
 \begin{minipage}{1.5cm} \begin{tikzpicture}[yscale=1,xscale=1]
\draw[densely dotted,rounded corners](-0.25,0)
  rectangle (1.25,1.4);
 
   \draw[line width=0.06cm, magenta](0,0) --++(90:1.4);
   \draw[line width=0.06cm, cyan,fill= cyan](0.5,0) 
  --++(90:1.4);

 
    \draw[line width=0.06cm, green!80!black,fill=green!80!black](1,0) --++(90:0.55) circle (2.5pt);
 \end{tikzpicture}  \end{minipage}
 \qquad
 \begin{minipage}{1cm} \begin{tikzpicture}[yscale=1,xscale=1]
\draw[densely dotted,rounded corners](-0.25,0)
  rectangle (0.75,1.4);
 
   \draw[line width=0.06cm, magenta](0,0) --++(90:1.4);
   \draw[line width=0.06cm, cyan,fill= cyan](0.5,0) 
  --++(90:1.4);

 
  \end{tikzpicture}  \end{minipage}\; ,
\end{equation}
 however  the  ${\sf 1}_{\leq \color{magenta}s_1 \color{cyan}s_2 \color{green!80!black}s_3\color{magenta}s_1 }  \mathcal{H}_{P\backslash W}^\Bbbk{\sf 1}_{\leq \color{magenta}s_1 \color{cyan}s_2 \color{green!80!black}s_3\color{magenta}s_1 } $-module 
$ \Delta_{\leq \color{magenta}s_1 \color{cyan}s_2 \color{green!80!black}s_3\color{magenta}s_1 }(\color{magenta}s_1 \color{cyan}s_2 \color{black} )	  	  $
is $( \grade+1)$-dimensional module with basis 
\begin{equation}\label{ghghghghg2} \begin{minipage}{1.5cm} \begin{tikzpicture}[yscale=1,xscale=1]
\draw[densely dotted,rounded corners](-0.25,0)
  rectangle (1.25,1.4);
 
   \draw[line width=0.06cm, magenta](0,0) --++(90:1.4);
   \draw[line width=0.06cm, cyan,fill= cyan](0.5,0) 
  --++(90:1.4);

 
    \draw[line width=0.06cm, green!80!black,fill=green!80!black](1,0) --++(90:0.55) circle (2.5pt);
 \end{tikzpicture}  \end{minipage}
 \qquad
 \begin{minipage}{1cm} \begin{tikzpicture}[yscale=1,xscale=1]
\draw[densely dotted,rounded corners](-0.25,0)
  rectangle (0.75,1.4);
 
   \draw[line width=0.06cm, magenta](0,0) --++(90:1.4);
   \draw[line width=0.06cm, cyan,fill= cyan](0.5,0) 
  --++(90:1.4);

 
  \end{tikzpicture}  \end{minipage}\;.\end{equation}
We illustrate how this works with the second diagram  in (\ref{ghghghghg}).
 We rewrite the second  diagram  using the Jones--Wenzl relation (depicted explicitly in \cref{sahkdgkshjagdjkshfgsdjhk}) as follows
\begin{equation}
\begin{gathered} \begin{tikzpicture}[yscale=1,xscale=1]
\draw[densely dotted,rounded corners](-0.25,0)
  rectangle (1.25,2);
 \clip(-0.25,0)
  rectangle (1.25,2);
   \draw[line width=0.06cm, magenta](0,0) --++(90:2.5);
   \draw[line width=0.06cm, cyan,fill= cyan](0.5,0) 
  --++(90:2.5);


   \draw[line width=0.06cm, magenta,fill=magenta](1.0,0)--++(90:0.55) circle (2.5pt);  
 \end{tikzpicture}  \end{gathered}
\; =\;
 \begin{gathered} \begin{tikzpicture}[yscale=1,xscale=1]
\draw[densely dotted,rounded corners](-0.25,0)
  rectangle (1.25,2);
\clip(-0.25,0)
  rectangle (1.25,2);

   \draw[line width=0.06cm, cyan,fill= cyan](0.5,0) 
  --++(90:1.0);



          \draw[line width=0.06cm, cyan] (0.5,1.0) to [out=150, in=-90] (0,1.5);      
             \draw[line width=0.06cm, cyan] (0.5,1.0) to [out=30, in=-90] (1,1.5);  

      \draw[line width=0.06cm, magenta ] (0.5,1.0) to [out=-30, in=90] (1,0.5)--(1.0,0);       
      \draw[line width=0.06cm, magenta ] (0.5,1.0) to [out=-150, in=90] (0,0.5)--(0,00); 
       \draw[line width=0.06cm, magenta ] (0.5,1.0) to [out=90, in=-90] (0.5,2);

%
%

          \draw[line width=0.06cm, cyan](1,1.5)--++(90:3) circle (2.5pt);  
          \draw[line width=0.06cm, magenta,fill=magenta](0.5,2)--++(90:2.5);        
            \draw[line width=0.06cm, cyan,fill=cyan] (0,1.5) circle (2.5pt); 
       
  \end{tikzpicture}  \end{gathered}
\;-\;
 \begin{gathered} \begin{tikzpicture}[yscale=1,xscale=1]
\draw[densely dotted,rounded corners](-0.25,0)
  rectangle (1.25,2);\clip(-0.25,0)
  rectangle (1.25,2);
 
   \draw[line width=0.06cm, magenta](0,0) --++(90:2);
   \draw[line width=0.06cm, cyan,fill= cyan](0.5,0) 
  --++(90:0.5 ) circle (2.5pt);;
   \draw[line width=0.06cm, cyan,fill= cyan](0.5,2) 
  --++(-90:0.5 ) circle (2.5pt);;
 


      \draw[line width=0.06cm, magenta ](1.0 ,0) to[out=90,in=0] (0,1.0);

%
%

%
%
%

          \draw[line width=0.06cm, magenta,fill=magenta](0,2)--++(90:0.5);          \draw[line width=0.06cm, cyan] (0.5,2)--++(90:0.5);
       
  \end{tikzpicture}  \end{gathered}\;.
\end{equation}
We recall that 
$\color{cyan}s_2 \color{black} \in P$; therefore  
the first diagram is now zero by the ${\sf 1}_ {\color{cyan}s_2  }		\otimes D=0$ relation and the second diagram 
belongs to the 2-sided cell-ideal generated by ${\sf 1}_{\color{magenta}s_1}$.  

 In fact, \emph{no} diagram   in (\ref{ghghghghg2}) has a rightmost $\color{magenta}s_1 \color{black}$-strand.
 In more detail, we have ${\color{magenta} s_1}{\color{cyan} s_2}<{\color{magenta} s_1}{\color{cyan} s_2}{\color{magenta} s_1} \notin {^PW}$, so  
 for any tableau $\stt$ of shape ${\color{magenta} s_1}{\color{cyan} s_2}$ and weight ending in ${\color{magenta} s_1}$, the light leaf $c_{\stt}$ factors through ${\sf 1}_{{\color{magenta} s_1}{\color{cyan} s_2}} \otimes {\sf spot}_{\color{magenta} s_1}$.
 By the above reasoning $c_{\stt}$ must therefore vanish in $\mathcal{H}_{P \backslash W}$.
 More generally, if $y<y\ctau \notin {^PW}$ then there are no parabolic tableaux of shape $y$ and weight ending in $\ctau$.
    This    illustrates an important aspect of the restriction functors we will define in the next section, and we will refer back to this example in the proof of \cref{plazaryom}.  
\end{eg}

 \color{black}

\subsection{Branching rules for standard modules}
\color{black}
We define the $\ctau$-restriction functor 
\begin{align}\begin{split}
\Res_{\leq \w}^{\leq \w\ctau} &: {\sf 1}_{\leq \w\ctau} \mathcal{H}_{P\backslash W}^\Bbbk{\sf 1}_{\leq \w\ctau}   {\rm -mod} \xrightarrow{ \ \ \ } {\sf 1}_{\leq \w } \mathcal{H}_{P\backslash W}^\Bbbk{\sf 1}_{\leq \w }{\rm -mod}\\
&:M \mapsto	\res( M ({\sf 1}_{\leq \w}\otimes {\sf 1}_\ctau)   )
\end{split}
\intertext{where }
\res &:
{\sf 1}_{\leq \w\ctau}  
 \mathcal{H}_{P\backslash W}^\Bbbk
{\sf 1}_{\leq \w\ctau}   {\rm -mod}
  \longrightarrow {\sf 1}_{\leq \w } \mathcal{H}_{P\backslash W}^\Bbbk{\sf 1}_{\leq \w }{\rm -mod}
\end{align}
denotes the ordinary restriction functor defined by the embedding of \cref{preimage}.
 While we use the term restriction 
 for $\Res_{\leq \w}^{\leq \w\ctau} $, we emphasise that 
 this functor does not preserve the underlying vector space. 
This is because the inclusion \ref{preimage} is the non-unital inclusion of an idempotent subalgebra, so restriction kills the complementary idempotent.

 \Cref{LEW1} has the following immediate corollary.   
 
\begin{cor} 
\label{plazaryom}
\color{black}
Let     $x=  y\ctau >y$, with $x, y  \in {^P}W$.       
 We have that 
\begin{align}\label{one11}
   0\to  \Bbbk\{c_{\stt^+}\mid \stt \in \Std_{\leq \w}(y)\} \to 
 \Res^{ {\leq \w \ctau}  }_{  {\leq \w  } }(   \Delta_{\leq \w\ctau}(x))
     \to 
     \Bbbk \{c_{\stt^+}\mid \stt \in \Std_{\leq \w}(x)\} \to 0
\intertext{
   and 
}\label{two11}
   0\to \Bbbk\{c_{\stt^-}\mid \stt \in \Std_{\leq \w}(y)\} \to 
\Res^{ {\leq \w \ctau}  }_{  {\leq \w  } }(    \Delta_{\leq \w\ctau}(y))     \to 
      \Bbbk\{c_{\stt^-}\mid \stt \in \Std_{\leq \w}(x)\} \to 0
 \end{align}
  where  
  the submodule is isomorphic to $\Delta_{\leq \w}(y)$ (respectively $\Delta_{\leq \w}(y)\langle 1 \rangle$) and the quotient module is    
    isomorphic to $\Delta_{\leq \w}(x)\langle -1 \rangle$ (respectively $\Delta_{\leq \w}(x)$).  
 Finally, if    $  y< y\ctau=x   \not\in  {^PW}$, then
\begin{align}\label{three11}
 \Res_{\leq \w}^{\leq \w\ctau}(\Delta_{\leq \w\ctau}(y))
 =
0.
\end{align}

     \end{cor}
     
     
\begin{proof}
 We have that the maps  
\begin{align}
   \Delta_{\leq \w }(y)\hookrightarrow  \Res^{ {\leq \w \ctau}  }_{  {\leq \w  } }(   \Delta_{\leq \w\ctau}(x))
   \quad&
 \phi  : c_\stt \mapsto c_{\stt}\otimes {\sf 1}_\ctau
\\
   \Delta_{\leq \w }(y)\hookrightarrow  \Res^{ {\leq \w \ctau}  }_{  {\leq \w  } }(   \Delta_{\leq \w\ctau}(y))
   \quad& \phi: c_\stt \mapsto c_{\stt }\otimes {\sf spot}^\emptyset_\ctau
\end{align}
for $\stt \in \Std_{\leq \w}(y)$ are    injective  $({\sf 1}_{\leq \w }\otimes {\sf1}_\ctau) \mathcal{H}_{P\backslash W}^\Bbbk({\sf 1}_{\leq \w }\otimes {\sf1}_\ctau) $-homomorphisms by construction.   Similarly, 
the maps  
\begin{align}
 \Delta_{\leq \w }(x) \to 
   \Res^{ {\leq \w \ctau}  }_{  {\leq \w  } }(   \Delta_{\leq \w\ctau}(x))/ 
    \phi(   \Delta_{\leq \w }(y))
   \quad&
   : c_\stt \mapsto (c_{\stt}\otimes {\sf 1}_\ctau)({\sf 1}_\y  \otimes {\sf fork}^{\ctau\ctau}_{\ctau})
\\
   \Delta_{\leq \w }(x) \to  
      \Res^{ {\leq \w \ctau}  }_{  {\leq \w  } }(   \Delta_{\leq \w\ctau}(y))/
          \phi(   \Delta_{\leq \w }(y))
   \quad& : c_\stt \mapsto (c_{\stt}\otimes {\sf 1}_\ctau)({\sf 1}_\y  \otimes ({\sf fork}^{\ctau\ctau}_{\ctau} {\sf spot}^\ctau_\emptyset )
\end{align}
for $\stt \in \Std_{\leq \w}(x)$ are  
   $({\sf 1}_{\leq \w }\otimes {\sf1}_\ctau) \mathcal{H}_{P\backslash W}^\Bbbk({\sf 1}_{\leq \w }\otimes {\sf1}_\ctau) $-isomorphisms by construction (as we have simply  multiplied on the right by an element of the algebra).
   \color{black} 
\color{black} On the other hand, if $y<y\ctau=x \notin {^PW}$, then $\Delta_{\leq \w\ctau}(y)({\sf 1}_{\leq \w} \otimes {\sf 1}_{\ctau})=0$ by the light leaves construction, so $\Res_{\leq \w}^{\leq \w\ctau}(\Delta_{\leq \w\ctau}(y))=0$.  We refer  back to \cref{for-the-ref} for an illustrative example.
 \end{proof}

 \begin{eg}\label{heresmyexample2}\color{black}
 We continue with the notation of \cref{heresmyexample}.
 We consider the restriction functor $\Res^{\leq \x \csigma}_{\leq \x}$
  for 
  $\csigma={\color{magenta}s_1}\in W$ and $\x= \color{magenta}s_1 \color{cyan}s_2 \color{green!80!black}s_3$ 
  (note that $\x\csigma=\w$ for $\w$ as in   \cref{heresmyexample}).  
The southern reading words of the first three diagrams in \cref{9/} are 
not of the form $\y\csigma$ for some $y\leq x$ and therefore these diagrams are sent to zero by the restriction functor.    
The remaining  diagrams in the first row of \cref{9/} form a submodule, isomorphic to 
$\Delta_{\leq \color{magenta}s_1 \color{cyan}s_2 \color{green!80!black}s_3}(1_{P\backslash W})$, the isomorphism is depicted on basis elements in \cref{inj}.  
 The  diagrams from the second row of \cref{9/} form a quotient module, isomorphic to 
$\Delta_{\leq \color{magenta}s_1 \color{cyan}s_2 \color{green!80!black}s_3}(		\color{magenta}s_1\color{black} 	)$, the isomorphism is depicted on basis elements in \cref{inj2}.

 \color{black}
 The reader will notice that this sequence is non-split. 
To see this, note that the monochrome diagrams from the first row of \cref{9/} are both obtained from the monochrome diagram in the second row as follows:
$$\color{black}	{\sf 1}_\csigma= {\sf fork}_{\csigma\csigma}^\csigma ({\sf spot}^\csigma_\emptyset\otimes {\sf 1}_\csigma)
\qquad\text{and}\qquad
({\sf spot}_\csigma^\emptyset \otimes {\sf 1}_\csigma)= {\sf fork}_{\csigma\csigma}^\csigma ({\sf spot}^\csigma_\emptyset
{\sf spot}_\csigma^\emptyset\otimes {\sf 1}_\csigma).$$
Whereas, there does not exist any   $D \in {\sf 1}_{\leq \color{magenta}s_1 \color{cyan}s_2 \color{green!80!black}s_3} \mathcal{H}^\Bbbk_{P\backslash W}{\sf 1}_{\leq \color{magenta}s_1 \color{cyan}s_2 \color{green!80!black}s_3}$ such that either 
$$\color{black} {\sf 1}_\csigma (D\otimes {\sf 1}_\csigma) = {\sf fork}_{\csigma\csigma}^\csigma
\qquad\text{or}\qquad
 ({\sf spot}^\csigma_\emptyset\otimes {\sf 1}_\csigma) (D\otimes {\sf 1}_\csigma) = {\sf fork}_{\csigma\csigma}^\csigma.
$$
 \color{black}

 \begin{figure}[ht!]\color{black}
$$
 \Bigg\{ \; \begin{minipage}{1.1cm} \begin{tikzpicture}[yscale=1,xscale=1]
\draw[densely dotted,rounded corners](-0.25,0)
  rectangle (0.75,1.4);
 
  
 \end{tikzpicture}  \end{minipage}
\;  , \; 
   \begin{minipage}{1.1cm}
  \begin{tikzpicture}[yscale=1,xscale=1]
\draw[densely dotted,rounded corners](-0.25,0)
  rectangle (0.75,1.4);
 
   \draw[line width=0.06cm, magenta,fill=magenta](0,0)   --++(90:0.55) circle (2.5pt);
 \end{tikzpicture} \end{minipage}\; \Bigg\}
 %
 %
\;
  \begin{minipage}{1.5cm} \begin{tikzpicture}[yscale=1,xscale=1]
\path (-0.5,0)
  rectangle (1,1.4) ;
  \path  (0.25,0.7)coordinate  (hi);

 \draw(hi) node [above] {$\otimes {1}_\csigma$} ;
 \draw(hi) node {$\xrightarrow{ \ \ \ \ \ \ \ \ } $};
  
 \end{tikzpicture} \end{minipage}\; %
 \Bigg\{ \; \begin{minipage}{1.1cm} \begin{tikzpicture}[yscale=1,xscale=1]
\draw[densely dotted,rounded corners](-0.25,0)
  rectangle (0.75,1.4);
 
   \draw[line width=0.06cm, magenta](0.5,0) --++(90:1.4);
  
 \end{tikzpicture}  \end{minipage}
\;  , \; 
   \begin{minipage}{1.1cm}
  \begin{tikzpicture}[yscale=1,xscale=1]
\draw[densely dotted,rounded corners](-0.25,0)
  rectangle (0.75,1.4);
 
   \draw[line width=0.06cm, magenta,fill=magenta](0,0)   --++(90:0.55) circle (2.5pt);
     \draw[line width=0.06cm, magenta](0.5,0) --++(90:1.4);
 \end{tikzpicture} \end{minipage}\; \Bigg\}$$

 \caption{\color{black}The isomorphism
 $\Delta_{\leq \color{magenta}s_1 \color{cyan}s_2 \color{green!80!black}s_3}(1_{P\backslash W})\cong 
\{c_{\stt^+}\mid \stt \in \Std_{\leq \color{magenta}s_1 \color{cyan}s_2 \color{green!80!black}s_3 }(1_{P\backslash W}  )\}  $  as in \cref{heresmyexample2}.  
The righthand-side   forms the submodule of $
\Res^{{\leq \color{magenta}s_1 \color{cyan}s_2 \color{green!80!black}s_3 \color{magenta}s_1}}_{{\leq  \color{magenta}s_1 \color{cyan}s_2 \color{green!80!black}s_3 }}(
 \Delta_{\leq \color{magenta}s_1 \color{cyan}s_2 \color{green!80!black}s_3 \color{magenta}s_1}(  \color{magenta}s_1 \color{black})) $
  in the short exact sequence of \ref{one11}.  }

\label{inj}
 \end{figure}

 \begin{figure}[ht!]\color{black}

  $$  
   \Bigg\{\;  \begin{minipage}{1.5cm} \begin{tikzpicture}[yscale=1,xscale=1]
\draw[densely dotted,rounded corners](-0.25,0)
  rectangle (1.25,1.4);
 
   \draw[line width=0.06cm, magenta](0,0) --++(90:1.4);
   \draw[line width=0.06cm, cyan,fill= cyan](0.5,0) 
  --++(90:0.55) circle (2.5pt);     
   \draw[line width=0.06cm, green!80!black,fill=green!80!black](1,0) 
  --++(90:0.55) circle (2.5pt);     

 \end{tikzpicture} \end{minipage}  
   \; , \;
  \begin{minipage}{1cm} \begin{tikzpicture}[yscale=1,xscale=1]
\draw[densely dotted,rounded corners](-0.25,0)
  rectangle (0.75,1.4);

   \draw[line width=0.06cm, magenta](0,0) --++(90:1.4);
      \draw[line width=0.06cm, green!80!black,fill=green!80!black](0.5,0) 
  --++(90:0.55) circle (2.5pt);
 
 \end{tikzpicture} \end{minipage}  \; , \;
  \begin{minipage}{1cm} \begin{tikzpicture}[yscale=1,xscale=1]
\draw[densely dotted,rounded corners](-0.25,0)
  rectangle (0.75,1.4);

   \draw[line width=0.06cm, magenta](0,0) --++(90:1.4);
      \draw[line width=0.06cm, cyan,fill= cyan](0.5,0) 
  --++(90:0.55) circle (2.5pt);
 
 \end{tikzpicture} \end{minipage}
  \; , \;
 \begin{minipage}{0.5cm} \begin{tikzpicture}[yscale=1,xscale=1]
\draw[densely dotted,rounded corners](-0.25,0)
  rectangle (0.25,1.4);
 
   \draw[line width=0.06cm, magenta](0,0) --++(90:1.4);
  
 \end{tikzpicture} \end{minipage} \; \Bigg\}
 \begin{minipage}{1.3cm} \begin{tikzpicture}[yscale=1,xscale=1]
\path (-0.5,0)
  rectangle (1,1.4) ;
  \path  (0.25,0.7)coordinate  (hi);

 \draw(hi) node {$\xrightarrow{ \ \ \ \ \ \ } $};
  
 \end{tikzpicture} \end{minipage}\; %
  \Bigg\{\; \begin{minipage}{2cm} \begin{tikzpicture}[yscale=1,xscale=1]
\draw[densely dotted,rounded corners](-0.25,0)
  rectangle (1.75,1.4);
 
   \draw[line width=0.06cm, magenta](0,0) --++(90:1.4);
   \draw[line width=0.06cm, cyan,fill= cyan](0.5,0) 
  --++(90:0.55) circle (2.5pt);    \draw[line width=0.06cm, green!80!black,fill=green!80!black](1,0) --++(90:0.55) circle (2.5pt);

   \draw[line width=0.06cm, magenta](1.5,0)--++(90:0.2) to [out=90,in=0](0,1);
 \end{tikzpicture} \end{minipage}  
   \; , \;
 \begin{minipage}{1.5cm} \begin{tikzpicture}[yscale=1,xscale=1]
\draw[densely dotted,rounded corners](-0.25,0)
  rectangle (1.25,1.4);
    \draw[line width=0.06cm, magenta](1,0)--++(90:0.2) to [out=90,in=0](0,1);

   \draw[line width=0.06cm, magenta](0,0) --++(90:1.4);
      \draw[line width=0.06cm, green!80!black,fill=green!80!black](0.5,0) 
  --++(90:0.55) circle (2.5pt);
 
 \end{tikzpicture} \end{minipage}  \; , \;
 \begin{minipage}{1.5cm} \begin{tikzpicture}[yscale=1,xscale=1]
\draw[densely dotted,rounded corners](-0.25,0)
  rectangle (1.25,1.4);
    \draw[line width=0.06cm, magenta](1,0)--++(90:0.2) to [out=90,in=0](0,1);
 
   \draw[line width=0.06cm, magenta](0,0) --++(90:1.4);
      \draw[line width=0.06cm, cyan,fill= cyan](0.5,0) 
  --++(90:0.55) circle (2.5pt);
 
 \end{tikzpicture} \end{minipage}
  \; , \;
 \begin{minipage}{1cm} \begin{tikzpicture}[yscale=1,xscale=1]
\draw[densely dotted,rounded corners](-0.25,0)
  rectangle (0.75,1.4);
    \draw[line width=0.06cm, magenta](0.5,0) to [out=90,in=0](0,0.8);

   \draw[line width=0.06cm, magenta](0,0) --++(90:1.4);
  
 \end{tikzpicture} \end{minipage} \; \Bigg\}
$$

 \caption{\color{black}The isomorphism
 $\Delta_{\leq \color{magenta}s_1 \color{cyan}s_2 \color{green!80!black}s_3}({\color{magenta}s_1})\cong 
\{c_{\stt^+}\mid \stt \in \Std_{\leq \color{magenta}s_1 \color{cyan}s_2 \color{green!80!black}s_3 }(\color{magenta}s_1 )\}  $  as in \cref{heresmyexample2}.  
The righthand-side   forms the quotient module of  $
\Res^{{\leq \color{magenta}s_1 \color{cyan}s_2 \color{green!80!black}s_3 \color{magenta}s_1}}_{{\leq  \color{magenta}s_1 \color{cyan}s_2 \color{green!80!black}s_3 }}(
 \Delta_{\leq \color{magenta}s_1 \color{cyan}s_2 \color{green!80!black}s_3 \color{magenta}s_1}(  \color{magenta}s_1 \color{black})) $
  in the short exact sequence of \ref{one11}.  }

\label{inj2}
 \end{figure}

 \end{eg}

%
%
%

    \subsection{$p$-Kazhdan--Lusztig polynomials}

 The categorical (rather than geometric) definition of   the  $p$-Kazhdan--Lusztig polynomials is given via the {\em diagrammatic character} of 
 \cite[Definition 6.23]{MR3555156}.   In the language of this paper,  the definition of the   {\sf anti-spherical $p$-Kazhdan--Lusztig polynomial}, ${^p}{n}_{x,y}(v)$  for $x,y\in {^PW}$,   is as follows, 
\begin{align}
{^p}{n}_{x,y}(\grade)
:=
   \sum_{k\in\mathbb Z}
 \dim_v(\Hom_{{\sf1}_{\leq \w} \mathcal{H}_{P\backslash W}^\Bbbk{\sf 1}_{\leq \w}}(P_{\leq w}(x),\Delta_{\leq \w}(y))
 =
\sum_{k\in \mathbb Z}   [\Delta_{\leq \w} (\underline{y})	: L _{\leq \w}(\underline{x})\langle k \rangle ] \grade^k   
 \end{align}
  for any  $x,y\leq w$  and $\underline{x}\in {\sf rex}_P(x),\underline{y}\in {\sf rex}_P(y)$,  $\w\in {\sf rex}_P(w)$ are arbitrary   
   (note that the definition of these polynomials is independent of $\w$).  
The anti-spherical  $p$-Kazhdan--Lusztig polynomials are recorded in the $|{^PW}|\times |{^PW}|$-matrix 
\begin{align}
{\bf D}_{P\backslash W}  =\big( {^pn}_{x,y}(\grade)\big)_{x,y \in {^PW}}    
\end{align}
and we set 
\begin{align}
{\bf D}^{-1} _{P\backslash W} =\big( {^pn}_{x,y}^{-1}(\grade)\big)_{x,y \in {^PW}}    
\end{align}
to be the inverse of this matrix (which exists, as ${\bf D}_{P\backslash W}$ is lower uni-triangular).  The non-parabolic ($p$-)Kazhdan--Lusztig polynomials are obtained by setting $P=\{1_W\}\leq W$.

  \section{The classification and construction of \\ homogeneous $\mathcal{H}^\Bbbk_{P \backslash W}$-modules}
 It is, in general, a hopeless task to attempt to understand all $p$-Kazhdan--Lusztig polynomials or to understand all simple $\mathcal{H}^\Bbbk_{P \backslash W}$-modules.  
In particular, it was shown in \cite{w13} that one can embed certain number-theoretic questions (for which no combinatorial solution could possibly be hoped to exist) into the $p$-Kazhdan--Lusztig matrices of affine symmetric groups.

Thus we restrict our attention to classes of modules which we can hope to understand.  
Over the complex numbers, the first port of  call would be   to attempt to understand the unitary modules; for Lie groups this ongoing project is Vogan's   famous Atlas of Lie groups.  
Over arbitrary fields, the  notion of unitary no longer makes sense; however,  
 for graded algebras  the homogeneous representations  seem to provide a suitable replacement.  
 For quiver Hecke algebras, the homogeneous representations were classified and constructed by Kleshchev--Ram \cite{JEMS}.  For (quiver) Hecke algebras of symmetric groups, the notions of unitary and homogeneous  representations coincide over the complex field \cite[Theorem 8.1]{BNS} and  the  beautiful cohomological and structural 
properties of these (homogeneous) representations are entirely independent of the field  \cite{BNS,JEMS}.

 In this section, we  fix  $W$ an arbitrary Coxeter group and fix $P$ an arbitrary parabolic subgroup and 
 we classify and construct the homogeneous representations of 
 the  diagrammatic Hecke category $\mathcal{H}_{P \backslash W}$.  
 We first provide a cohomological construction of the module $L(1_{P \backslash W})$  via a BGG resolution.  
This cohomological construction allows us to immediately deduce a basis-theoretic construction of $L(1_{P \backslash W})$, from which we easily read-off the fact that $L(1_{P \backslash W})$ is homogeneous.  We then prove that $L(w)$ is inhomogeneous for any $1\neq w \in {^PW}$.

\begin{defn}
Given $w, y\in {^PW}$, we say that   
 $( w, y)$ is a {\sf Carter--Payne pair} if $y\leq w$ and $\ell(y)=\ell(w)-1$.  
 We let ${\sf CP}_\ell$ denote the set of Carter--Payne pairs $(w,y)$ 
 with $\ell(w)=\ell\in \NN$.  
  \end{defn}

For $P\subseteq W$  an affine  Weyl group an its maximal finite parabolic subgroup,  the following family of homomorphisms were first considered (in the context of algebraic groups) by Carter--Payne in \cite{CP}.

\begin{thm}
For   $(w,y) \in {\sf CP}_\ell$, pick an arbitrary reduced expression  $\w=\sigma_1\dots \sigma_\ell$ and 
suppose that  $\y=\sigma_1\dots \sigma_{p-1}\widehat{\sigma}_p \sigma_{p+1}\dots \sigma_\ell$ is the subexpression of $y$ obtained by deleting precisely one element $\sigma_p \in S$.    We have that 
$$ 
\Hom_{\mathcal{H}_{P\backslash W}^\Bbbk}(\pDelta(  w ),\pDelta( y ))
$$ 
is $\grade^1$-dimensional.
Given choices of $\w$ and $\y$ as above
this homomorphism space is
 spanned by the map 
 $$
\varphi^ w_{ y} (c_\stt ) = 
 ({\sf1}_ {\sigma_1\cdots \sigma_{p-1}} \otimes 
  {\sf spot}_{ {\sigma}_p} ^\emptyset 
    \otimes {\sf 1}_{ \sigma_{p+1}\cdots \sigma_\ell}   )
 c_\stt 
$$
for $\stt \in \Std(w)$.  
\end{thm}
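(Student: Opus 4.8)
The plan is to squeeze the Hom space between a one-dimensional, degree-$1$ upper bound and the explicit map $\varphi^\w_\y$. I would first reduce to a truncation ${\sf 1}_{\leq\w}\mathcal{H}_{P\backslash W}{\sf 1}_{\leq\w}$ (legitimate since $y\leq w$, so both standard modules are supported on weights $\leq\w$), so that the cellular basis of \cref{LEW1} and the light-leaves calculus of \cite[Section~6.4]{MR3555156} are available, and then carry out two essentially independent steps: (i) show $\dim_\grade\Hom_{\mathcal{H}_{P\backslash W}}(\pDelta(\w),\pDelta(\y))\leq\grade^1$; (ii) show the displayed formula defines a well-defined nonzero homomorphism of degree $1$, which by (i) must then span.

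For step (i), recall that $\pDelta(\w)$ is cyclic, generated by the light-leaf basis element $c_{\stt_\w}={\sf 1}_\w$ attached to the ``through'' tableau $\stt_\w=(1,1,\dots,1)$ (each light leaf is obtained from $c_{\stt_\w}$ by acting with the algebra, via the recursion recalled before \cref{LEW1}). Hence any homomorphism $\varphi$ is determined by $\varphi(c_{\stt_\w})$, which is fixed by the idempotent ${\sf 1}_\w$ and so lies in the weight-$\w$ subspace of $\pDelta(\y)$; thus $\varphi\mapsto\varphi(c_{\stt_\w})$ embeds the Hom space, as a graded vector space, into that weight space. I would then compute it from the cellular basis: it is spanned by the light leaves $c_\stt$ with $\stt$ a subword of $\w$ satisfying $\Shape_\ell(\stt)=y$. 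Since $\ell(y)=\ell(w)-1$, such a $\stt$ has exactly one $0$-entry, say in slot $j$, and the elements $\sigma_1\cdots\widehat{\sigma}_j\cdots\sigma_\ell=w\,t_j$ (with $t_j$ the pairwise distinct reflections read off $\w$ from the right) are pairwise distinct; hence there is \emph{at most one} such subword. As $(w,y)$ is a Carter--Payne pair realised by deleting $\sigma_p$, this subword is $\stt^{(p)}=(1,\dots,1,0,1,\dots,1)$ (zero in slot $p$), and running the light-leaves algorithm on $\stt^{(p)}$ identifies $c_{\stt^{(p)}}$ with ${\sf 1}_{\sigma_1\cdots\sigma_{p-1}}\otimes{\sf spot}^{\sigma_p}_\emptyset\otimes{\sf 1}_{\sigma_{p+1}\cdots\sigma_\ell}$: the step at slot $p$ is an ascending ``$0$''-step (ascending because $\w$ is reduced), contributing one spot of degree $1$, while every other step is degree $0$. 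So the weight-$\w$ subspace of $\pDelta(\y)$ is one-dimensional and concentrated in degree $1$, giving the bound.

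For step (ii), the map $\varphi^\w_\y$ is composition with the fixed degree-one diagram $D_p:={\sf 1}_{\sigma_1\cdots\sigma_{p-1}}\otimes{\sf spot}^{\sigma_p}_\emptyset\otimes{\sf 1}_{\sigma_{p+1}\cdots\sigma_\ell}$. Granting it is a morphism, it has degree $1$ and sends $c_{\stt_\w}$ to $c_{\stt^{(p)}}\neq 0$, so by step (i) it is nonzero and spans the Hom space. Its $\mathcal{H}_{P\backslash W}$-linearity is formal, because $D_p$ is a fixed morphism composed on the side opposite to the module action --- exactly as for the maps ``obtained by multiplying on the right by an element of the algebra'' in the proof of \cref{plazaryom}. (Here one also uses that, $\w$ being a reduced $P$-expression and $y\in{^PW}$, the deleted expression $\y$ is a reduced $P$-expression, so $D_p$ is a genuine nonzero morphism in the parabolic quotient.)

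The step I expect to be the main obstacle is the \emph{well-definedness} of $\varphi^\w_\y$: that composition with $D_p$ descends from ${\sf 1}_\w\mathcal{H}_{P\backslash W}$ to $\pDelta(\w)$ and has image inside $\pDelta(\y)$ --- equivalently, that it carries the cellular column of $\pDelta(\w)$ into that of $\pDelta(\y)$, sending the lower-shape relations cutting out $\pDelta(\w)$ into those cutting out $\pDelta(\y)$. This is not formal ideal-chasing, since lower-shape terms for $\w$ include shape-$y$ terms; it is instead where the light-leaves straightening calculus of \cite[Section~6.4]{MR3555156} is needed: because $D_p$ is itself a light leaf (for $\stt^{(p)}$), composition with it ``uses up'' one through-strand, so straightening $c_\stt D_p$ produces only light leaves of shape $\leq y$, with the shape-$<y$ part being precisely what is annihilated in $\pDelta(\y)$. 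Alternatively one can assemble $\varphi^\w_\y$ from the elementary short exact sequences of \cref{plazaryom} by peeling off $\sigma_{p+1},\dots,\sigma_\ell$ one letter at a time; either route supplies the one ingredient not already contained in the cyclicity-plus-weight-space argument of step~(i).
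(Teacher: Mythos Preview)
Your proposal is correct and complete in outline, but it takes a genuinely different route from the paper's own proof. Both arguments begin identically: truncate to ${\sf 1}_{\leq\w}\mathcal{H}_{P\backslash W}{\sf 1}_{\leq\w}$ and observe (via the light-leaves basis and the distinct-reflections argument you give) that $\Delta_{\leq\w}(\y)$ is one-dimensional, spanned by the spot diagram, and concentrated in degree~$1$. After that the two proofs diverge.

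The paper never constructs $\varphi^\w_\y$ directly nor checks its well-definedness. Instead it argues purely representation-theoretically: since the graded character of any simple module is invariant under $v\leftrightarrow v^{-1}$ (a general fact for graded cellular algebras, quoted from \cite{hm10}), while $\Delta_{\leq\w}(\y)$ sits entirely in strictly positive degree, one gets $L_{\leq\w}(\y)=0$. Combined with the Carter--Payne gap (no $x$ with $y<x<w$), this forces $\Delta_{\leq\w}(\y)\cong L_{\leq\w}(\w)\langle 1\rangle$ as a module. The Hom space is then $v^1$ because $L_{\leq\w}(\w)$ is the simple head of $\Delta_{\leq\w}(\w)$ (equivalently, via decomposition numbers and the observation that any map from $P_{\leq\w}(\w)$ factors through $\Delta_{\leq\w}(\w)$). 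The explicit description of the spanning map is then forced: it must send ${\sf 1}_\w$ to the unique basis element of $\Delta_{\leq\w}(\y)$.

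Your approach trades this structural input for a direct construction: your step~(i) gives only the \emph{upper} bound $\dim_v\Hom\leq v^1$ via cyclicity, and you close the gap in step~(ii) by exhibiting $\varphi^\w_\y$ and verifying it is nonzero. This is more elementary---no appeal to the $v\leftrightarrow v^{-1}$ symmetry of simples or to composition-series arguments---but, as you correctly identify, it requires checking that right composition with the spot diagram descends to the standard quotients. The paper's route sidesteps that check entirely: once $\Hom$ is shown to be nonzero abstractly, well-definedness of the described map follows for free. Both arguments are valid; the paper's is shorter and leans on the ambient highest-weight machinery, while yours is more self-contained and makes the map explicit from the outset (at the cost of the straightening argument you flag as the main obstacle).
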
   

\begin{proof} 
Since $ \mathcal{P}_{\leq  w} $ is a co-saturated subset of ${^P}W$,
 we can truncate to the algebra 
${\sf1}_{\leq \w}\mathcal{H}^\Bbbk_{P \backslash W}{\sf1}_{\leq \w}$ by  \cite[Appendix A3.13]{Donkin}. 
\color{black}
 We have that 
$$
  \pDelta(y) {\sf 1}_{\w} = {\rm Span}_\ZZ \{{\sf1}_ {\sigma_1\cdots \sigma_{p-1}} \otimes 
 {\sf spot}_{ {\sigma}_p} ^\emptyset 
   \otimes {\sf 1}_{ \sigma_{p+1}\cdots \sigma_\ell}  \}
$$
by \cref{LEW1}, 
\color{black}
as there is a unique tableau $\stt$ with shape $y$ \cite[Theorem~5.8]{hum}.  
\color{black}
Moreover this space is of strictly positive degree, namely $\grade^1$.  
Whereas, the character of the simple head, $L(y)$ of $\pDelta(y) $, 
is invariant under swapping $\grade$ and $\grade^{-1}$ by \cite[Proposition 2.18]{hm10}.
  Therefore 
\begin{equation}\label{dfkjldglkfjgd}
    \pDelta  (y){\sf 1}_{\w} =  \rad (  \pDelta  (y){\sf 1}_{\w} ) \quad \text{ and } \quad 
 L (y){\sf 1}_{\w}  =0.\end{equation} 
By our assumption that $(w,y)$ is a Carter--Payne pair, 
 there does not exist an $x\in W$ such that 
$y<  x <  w$. 
We now apply this assumption twice.  Firstly, we note that 
  $  [ \Delta_{\leq \w} (y): L_{\leq \w}(x)]\neq 0$ implies that $y\leq   x \leq   w$.  
Putting this together with \cref{dfkjldglkfjgd} we have that  
$ \rad ( \pDelta_{\leq \w} (y))= L_{\leq \w}( w)\langle 1 \rangle$
and the graded decomposition number is equal to 
$$
 \dim_\grade(\Hom_{{\sf 1}_{\leq \w}\mathcal{H}^\Bbbk_{P \backslash W}{\sf 1}_{\leq \w}}(P_{\leq w}(w), \pDelta_{\leq \w}(y)))= 
 \sum_{k\in \mathbb Z} [\pDelta_{\leq \w} (y) : L_{\leq \w} (w)\langle k \rangle ] = \grade^1 .
$$
Now applying our assumption again,  we conclude that   this homomorphism factors through the projection $P_{\leq w}(w)\to \Delta_{\leq \w}(w)$  by highest weight theory and so we have  
$$
  \dim_\grade(
  \Hom_{{\sf 1}_{\leq \w} \mathcal{H}^\Bbbk_{P \backslash W}{\sf 1}_{\leq \w}}
  (P_{\leq w}(w), \pDelta_{\leq \w}(y)))= 
\dim_\grade(\Hom_{{\sf 1}_{\leq \w}\mathcal{H}^\Bbbk_{P \backslash W}{\sf 1}_{\leq \w}}(\pDelta_{\leq \w}(w), \pDelta_{\leq \w}(y))),
$$ 
and thus the result follows.  
\end{proof}

We set $\mathcal{P}_{ \ell} =  \{ w \in  {^PW}  \mid \ell(w)=\ell \}$ for each $\ell\in \NN$.  
 Following a construction going back to
work of Bernstein--Gelfand--Gelfand 
and Lepowsky \cite{bgg,MR0414645}, we are going to define a complex
of graded $\mathcal{H}_{P\backslash W}^\Bbbk$-modules
\begin{equation}\label{bggres}
\cdots
\longrightarrow
\Delta_2 \stackrel{\delta_2}{\longrightarrow}
\Delta_1 \stackrel{\delta_1}{\longrightarrow}
\Delta_0 \stackrel{\delta_0}{\longrightarrow}
  0,
\end{equation}
where
\begin{equation}\label{modi}
\Delta_\ell := \bigoplus_{
\begin{subarray}c
w \in \mathcal{P}_{\ell} \end{subarray}} 
\Delta(w)\langle \ell(w) \rangle.
\end{equation}
We will refer to this as the {\sf BGG complex}. 
\color{black}
%
We momentarily assume that $P=1\leq W$ is the trivial parabolic (so that ${^PW}=W$).    
\color{black}
 Suppose   $w,  z\in W$ such that
  $\ell(w)=\ell(z)+2$ and 
   $ w>   z$ in the Bruhat ordering.  By \cite[Lemma~10.3]{bgg} there exists a unique pair of distinct elements  $x,y \in W$ such that $w>x,y>z$.  
We refer to the quadruple $w,x,y,z \in W$ as a {\sf diamond} and  we have homomorphisms  of $\mathcal{H}^\Bbbk_W$-modules
\begin{align}\begin{minipage}{5.5cm}
\begin{tikzpicture}
 \path(0,0)--++(90:30pt) coordinate(N);
 \path(0,0)--++(-90:30pt) coordinate(S);
 \path(0,0)--++(0:70pt) coordinate(E);
  \path(0,0)--++(180:70pt) coordinate(W);
\draw(N) node {$\Delta(x)$};
\draw(S) node {$\Delta(y)$};
\draw(W) node {$\Delta(w)$};
\draw(E) node {$\Delta(z)$};
\draw[->](173:70pt)to node[midway,above]  {$\varphi^ w_{ x}$} (120:30pt)  ; 
\draw[->](-173:70pt)to node[midway,below]  {$\varphi^w_{y}$} (-120:30pt)  ; 
\draw[<-](7:70pt)to node[midway,above]  {$\varphi^x_{z}$} (60:30pt)  ; 
\draw[<-](-7:70pt)to node[midway,below]  {$\varphi^y_{z}$} (-60:30pt)  ;   \end{tikzpicture}
\end{minipage}\end{align}
given by our Carter--Payne homomorphisms.  
By a generalization of 
\cite[Lemma 10.4]{bgg} to arbitrary Coxeter groups (see \cite[\S 6.17]{hum}), it is possible to pick
a sign $\eps(\alpha,\beta)$ for each Carter--Payne pair  $(\alpha,\beta)$
such that for every diamond
the product of the signs associated to its four arrows
is equal to $-1$.
%
%
      We can now define the $\mathcal{H}^\Bbbk_{  W}$-differential
$\delta_\ell:\Delta_{\ell } \rightarrow \Delta_{\ell-1}$ for $\ell\geq 1$ 
to be the sum of the maps
\begin{align}
\eps(\alpha,\beta) \varphi^{{\alpha}}_{{\beta}}:\Delta({ {\alpha}})\langle \ell \rangle
\rightarrow \Delta({ {\beta}})\langle  \ell-1  \rangle
\end{align}
over all  Carter--Payne pairs     $(\alpha,\beta)\in {\sf CP}_\ell 
 $.  
We let   $ C_\bullet(1_{  W}) = 
  \bigoplus_{\ell\geq 0 }{  \Delta}_\ell \langle \ell \rangle 
 $ 
together with the differential $(\delta_\ell)_{\ell\geq 0}$.


\color{black}
\begin{lem}\label{lemmmmmmer}
We have that $\varphi^{ w}_{x} \varphi^{x}_ {z}= 
\varphi^{w}_{y} \varphi_{z}^{y} 
$.
\end{lem}
\begin{proof}
We  truncate to consider a subalgebra ${\sf 1}_{\leq \w }  \mathcal{H}_{  W}^\Bbbk {\sf 1}_{\leq \w}$ for $\w=\sigma_1 \sigma_2\cdots \sigma_\ell$ a fixed reduced expression of $w\in W$.  
Since $\ell(x)=\ell(y)=\ell-1$ there are unique subexpressions $\x,\y$ for $x,y$ respectively inside $\w$. 
Similarly there are unique subexpressions for $z$ inside $\x,\y$, which induce subexpressions $\vvv_x,\vvv_y$ for $z$ inside $\w$.
On the other hand, there is a unique diagram in $ \Delta_{\leq \w}(z)1_{\w}$ of maximal degree equal to $\ell(w)-\ell(z)=2$, this diagram is equal to
$$
({\sf1}_ {\sigma_1\cdots \sigma_{t-1}} \otimes 
 {\sf spot}_{ {\sigma}_{t}} ^\emptyset 
   \otimes {\sf 1}_{ \sigma_{t+1} \cdots \sigma_{u-1}} \otimes {\sf spot}_{\sigma_u}^\emptyset \otimes {\sf 1}_{\sigma_{u+1} \dots   \sigma_{\ell}}   )
$$
for some $1\leq t<u\leq \ell$.  
The corresponding reduced expression 
$$
\sigma_1\dots
\sigma_{t-1}\widehat{\sigma}_t \sigma_{t+1}\dots
 \sigma_{u-1}\widehat{\sigma}_u \sigma_{u+1}\dots \sigma_\ell$$
   is equal to (at least) one of the expressions 
 $\vvv_x $ or $ \vvv_y$. 
Without loss of generality, we suppose it is $\vvv_y$; this implies   that 
$$\varphi^{ w}_{y} \varphi_{z}^{y} (c_\stt)= ({\sf1}_ {\sigma_1\cdots \sigma_{t-1}} \otimes 
 {\sf spot}_{ {\sigma}_{t}} ^\emptyset 
   \otimes {\sf 1}_{ \sigma_{t+1} \cdots \sigma_{u-1}} \otimes {\sf spot}_{\sigma_u}^\emptyset \otimes {\sf 1}_{\sigma_{u+1} \dots   \sigma_{\ell}}   ) c_\stt.$$   
   We now consider the other composition $\varphi^{w}_{x} \varphi^{x}_ {z}$.   
Let $1\leq p<q \leq \ell$ denote the indices of the terms $\sigma_p$ and $\sigma_q$ which are deleted from $\w$ to obtain $\vvv_x$. 
If $\vvv_x \neq \vvv_y$ then $\x = \sigma_1\dots \sigma_{q-1}\widehat{\sigma}_q \sigma_{q+1}\dots \sigma_\ell$,
 whereas $\sigma_1\dots \sigma_{p-1}\widehat{\sigma}_p \sigma_{p+1}\dots \sigma_\ell$ is a non-reduced expression. 
It  follows from Humphrey's Deletion condition \cite[Section 5.8]{hum}
that there exists   
some   $q'<p<q$ and that 
the subexpression $\vvv'=
\sigma_1\dots
\sigma_{q'-1}\widehat{\sigma}_{q'} \sigma_{q'+1}\dots
 \sigma_{p-1}\widehat{\sigma}_p \sigma_{p+1}\dots \sigma_\ell$
  is   also a reduced expression for $z$.
By applying a sequence of braids we may assume that $q'=p-1$ and $q=p+1$
(so that the cancellation in the non-reduced expression involves two adjacent reflections); thus   we can apply the (local) relation 
  \begin{equation}\label{needed}
     \begin{minipage}{1cm}
 \begin{tikzpicture}[scale=1]

\draw[densely dotted, rounded corners] (-0.25,0) rectangle (1.25-0.5,2);

\begin{scope}

\clip(-0.25,0) rectangle (1.25-0.5,2);

 \draw[magenta ,line width=0.08cm](0,0)    -- (0,2);

 \draw[magenta,fill=magenta ,line width=0.08cm](0.5,0)    -- (0.5,0.8) circle (3pt);

  \end{scope}

 \end{tikzpicture}
 \end{minipage}
=      \begin{minipage}{1cm}
 \begin{tikzpicture}[scale=1]

\draw[densely dotted, rounded corners] (-0.25,0) rectangle (1.25-0.5,2);

\begin{scope}

\clip(-0.25,0) rectangle (1.25-0.5,2);

 \draw[magenta ,line width=0.08cm,fill=magenta](0,0)    --++(90:0.8) circle (3pt
);

 \draw[magenta ,line width=0.08cm](0.5,2)    to [out=-90,in=90] (0.5,0);
;


  \end{scope}

 \end{tikzpicture}
 \end{minipage}
 +
     \begin{minipage}{1cm}
 \begin{tikzpicture}[xscale=1,yscale=-1]

\draw[densely dotted, rounded corners] (-0.25,0) rectangle (1.25-0.5,2);

\begin{scope}

\clip(-0.25,0) rectangle (1.25-0.5,2);

 \draw[magenta ,line width=0.08cm,fill=magenta](0.5,0)    --++(90:0.6) circle (3pt);

 \draw[magenta ,line width=0.08cm](0,2)    to [out=-90,in=180] (0.5,1.6);
;

 \draw[magenta,fill=magenta ,line width=0.08cm](0.5,2)    -- (0.5,1.2) 
 circle(3pt);

  \end{scope}

 \end{tikzpicture}
 \end{minipage}
  -
     \begin{minipage}{1cm}
 \begin{tikzpicture}[xscale=1,yscale=1]

\draw[densely dotted, rounded corners] (-0.25,0) rectangle (1.25-0.5,2);

\begin{scope}

\clip(-0.25,0) rectangle (1.25-0.5,2);

  \draw[magenta ,line width=0.08cm,fill=magenta](0,1)    --++(90:0.3) circle (3pt);
  \draw[magenta ,line width=0.08cm,fill=magenta](0,1)    --++(-90:0.3) circle (3pt);


 \draw[magenta ,line width=0.08cm](0,0)    to [out=90,in=180] (0.5,0.6);

 \draw[magenta ,line width=0.08cm](0.5,0)  --++(90:1.2)  to [out=90,in=-90] (0.5,2);

  \end{scope}

 \end{tikzpicture}
 \end{minipage}
 \end{equation}
to see that
\begin{multline*}
({\sf1}_ {\sigma_1\cdots \sigma_{p-1}} \otimes 
 {\sf spot}_{ {\sigma}_{p}} ^\emptyset 
   \otimes {\sf 1}_{ \sigma_{p+1} \cdots \sigma_{q-1}} \otimes {\sf spot}_{\sigma_q}^\emptyset \otimes {\sf 1}_{\sigma_{q+1} \dotsm \sigma_{\ell}}   )\\
=
({\sf1}_ {\sigma_1\cdots \sigma_{q'-1}} \otimes 
 {\sf spot}_{ {\sigma}_{q'}} ^\emptyset 
   \otimes {\sf 1}_{ \sigma_{q'+1} \cdots \sigma_{p-1}} \otimes {\sf spot}_{\sigma_p}^\emptyset \otimes {\sf 1}_{\sigma_{p+1} \dotsm \sigma_{\ell}}   ) 
   \pmod{\mathcal{H}_{W}^{<z}} 
\end{multline*}
(Any diagram with less than $\ell(z)$ propagating strands is zero; thus   the second term in \cref{needed} is immediately zero and 
 the third term is zero by the barbell  and   cyclotomic relations.)
 Now we consider the expression obtained from $\w$ by deleting $\sigma_{q'}$. 
If it is reduced, then by uniqueness of maximal degree basis elements, the expression obtained by deleting $\sigma_{q'}$ and $\sigma_p$ must be $\vvv_y$, and we are done. 
Otherwise, we can repeat the above   process.  
This must eventually terminate, as the indices of the deleted generators get smaller with each step at which point, we deduce that 
\begin{multline*}
({\sf1}_ {\sigma_1\cdots \sigma_{p-1}} \otimes 
 {\sf spot}_{ {\sigma}_{p}} ^\emptyset 
   \otimes {\sf 1}_{ \sigma_{p+1} \cdots \sigma_{q-1}} \otimes {\sf spot}_{\sigma_q}^\emptyset \otimes {\sf 1}_{\sigma_{q+1} \dotsm \sigma_{\ell}}   )\\
=
({\sf1}_ {\sigma_1\cdots \sigma_{t-1}} \otimes 
 {\sf spot}_{ {\sigma}_{t}} ^\emptyset 
   \otimes {\sf 1}_{ \sigma_{t+1} \cdots \sigma_{u-1}} \otimes {\sf spot}_{\sigma_u}^\emptyset \otimes {\sf 1}_{\sigma_{u+1} \dots   \sigma_{\ell}}   )
   \pmod{\mathcal{H}_{W}^{<z}}
   \end{multline*}
and the result follows, as we are working in the standard module 
$\Delta(z)={\mathcal{H}_{W}^{\leq z}}/ {\mathcal{H}_{W}^{<z}}$.  
\end{proof}

 \color{black}
\begin{cor}\label{compllllll}
   We have that 
$ 
{\rm Im}({\delta}_{\ell+1}) \subseteq {\rm ker}({\delta}_{\ell}) 
$,  
in other words 
  $ C_\bullet(1_{  W})$  
is a complex. 

\end{cor}

\begin{proof}\color{black}
We have defined the  differential (via the scalars $\varepsilon(w,z)$)  so that 
the composition  $\delta_\ell \delta_{\ell-1}$  restricted to a given
diamond  is equal to 
$
\varphi^{ w}_{ x} \varphi^{ x}_ {z} -
\varphi^{ w}_{y} \varphi_{z}^{y} 
$ and  so the result follows from \cref{lemmmmmmer}.  
\end{proof}

Now, we  apply the quotient functor 
 $:\mathcal{H}_{  W}{\rm -mod }\to  \mathcal{H}_{P\backslash W}{\rm -mod}$ 
to $C_\bullet(1_{  W})$    and hence obtain 
  $ C_\bullet(1_{P \backslash W}) = 
  \bigoplus_{\ell\geq 0 }{  \Delta}_\ell \langle \ell \rangle 
 $ 
together with the differential $(\delta_\ell)_{\ell\geq 0}$.  (We have abused notation slightly by identifying the 
differentials for   $ \mathcal{H}_{P\backslash W}$- and 
  $\mathcal{H}_{  W} $-modules.)

\begin{prop}\label{compllllll2}
We have that   $ C_\bullet(1_{  P\backslash W})$  
is a complex. 
\end{prop}
\begin{proof}
\color{black} For arbitrary $P\leq W$, we note that $\mathcal{H}_{P\backslash W}$  is the quotient of 
$\mathcal{H}_{W}$ by the parabolic annihilation relation (\ref{annihilation}).  Taking quotients preserves complexes and so the result follows from \cref{compllllll}.  
\end{proof}

\begin{rmk}
\color{black}  
In the quotient,  diamonds can ``collapse".  For example, if $y\not \in {^P}W$ then we obtain 
\begin{align}\begin{minipage}{5.5cm}
\begin{tikzpicture}
 \path(0,0)--++(90:30pt) coordinate(N);
 \path(0,0)--++(-90:30pt) coordinate(S);
 \path(0,0)--++(0:70pt) coordinate(E);
  \path(0,0)--++(180:70pt) coordinate(W);
\draw(N) node {$\Delta(x)$};
\draw(S) node {$0$};
\draw(W) node {$\Delta(w)$};
\draw(E) node {$\Delta(z)$};
\draw[->](173:70pt)to node[midway,above]  {$\varphi^ w_{ x}$} (120:30pt)  ; 
\draw[->](-173:70pt)to 
 (-120:30pt)  ; 
\draw[<-](7:70pt)to node[midway,above]  {$\varphi^x_{z}$} (60:30pt)  ; 
\draw[<-](-7:70pt)to 
  (-60:30pt)  ;   \end{tikzpicture}
\end{minipage}\end{align}
 in which case, we have that 
 $\varphi^{w}_{x} \varphi^{x}_ {\vvv}= 0$.  
 \color{black} 
 (To see this, simply note that the equality $\varphi^{w}_{x} \varphi^{x}_ {z}= 
\varphi^{w}_{y} \varphi_{z}^{y} 
$ continues to hold, but that the righthand-side of the equality factors through the zero module).
For example, this happens in \cref{heresmyexample}
with $w=\color{magenta}s_1 \color{cyan}s_2  $,  
$x=\color{magenta}s_1$, 
$y= \color{cyan}s_2$, and $z= 1_W$.  
\end{rmk}

We have already encountered one drawback of the $\ctau$-restriction functors from the previous section: they kill any standard   module $\Delta_{\leq \w}(x)$ such that $x<x\ctau \not\in {^PW}$  (and therefore the simple head is also killed).    
To remedy this, we define  slightly larger algebras 
\begin{align}
{\sf 1}_{\leq \ell}\mathcal{H}_{P\backslash W}^\Bbbk{\sf 1}_{\leq \ell}
\quad \text{for} \quad 
\color{black}
{\sf 1}_{\leq \ell}=\sum_{
\begin{subarray}c \w \in {\sf exp}_P^k(w) \\ 0\leq k \leq \ell \\ w\in W
\end{subarray}}{\sf 1}_{  \w}
\end{align}
and we define $\Res^{\ell+1}_{\ell} : 
{\sf 1}_{\leq \ell+1}\mathcal{H}_{P\backslash W}^\Bbbk{\sf 1}_{\leq \ell+1}
\to 
{\sf 1}_{\leq \ell}\mathcal{H}_{P\backslash W}^\Bbbk{\sf 1}_{\leq \ell}$ to be the functor 
 \begin{align} 
\Res^{\ell+1}_\ell= \bigoplus_{
  \begin{subarray}c
     \w \in {\sf exp}_P^\ell 
   \\ 
     \ctau  \in S
  \end{subarray}} \Res_{\leq \w}^{\leq \w\ctau}.
  \end{align}

\begin{lem}\label{importantforproof}
Let  $  1_{P\backslash W} \neq x,w \in {^PW}$ and suppose that $x\leq w$.  
We have that  $$\Res_{\leq \w}^{\leq    \w \ctau}(L_{\leq \w \ctau}(x))\neq 0$$ for some $\ctau \in S$ and $\w\in {\sf exp}_P(w)$.  
 Therefore 
 $\Res_{  \ell }^ {  \ell+1}(  L (x){\sf 1}_{
\leq \ell+1})= 0$ implies  $ x=1_{P\backslash W}$.  
\end{lem}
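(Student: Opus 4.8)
Below I sketch how I would prove \cref{importantforproof}.

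The second assertion is a formal consequence of the first. Indeed $\Res^{\ell+1}_\ell=\bigoplus_{\w,\ctau}\Res_{\leq\w}^{\leq\w\ctau}$ (over $\w\in{\sf exp}_P^\ell$ with $\w\ctau\in{\sf exp}_P^{\ell+1}$), and ${\sf 1}_{\leq\ell+1}L(\x)=\bigoplus_{\w\ctau}L_{\leq\w\ctau}(\x)$, using the standard fact ${\sf 1}_{\leq\w\ctau}L(\x)=L_{\leq\w\ctau}(\x)$ (the truncation of the global simple is the local simple). So it is enough to produce one pair $(\w,\ctau)$ with $\Res_{\leq\w}^{\leq\w\ctau}(L_{\leq\w\ctau}(\x))\neq 0$: for $x\neq 1_{P\backslash W}$ the first assertion applied with $w:=x$ does this, with $\w$ a reduced expression of $x$ whose final letter $\ctau$ is a right descent of $x$, and then $\w\ctau\in{\sf exp}_P^{\ell(x)+1}(x\ctau)$ because $x\ctau\in{^PW}$ whenever $x\in{^PW}$ and $x\ctau<x$. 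Hence $\Res_\ell^{\ell+1}({\sf 1}_{\leq\ell+1}L(\x))=0$ forces $x=1_{P\backslash W}$, and I concentrate on the first assertion.

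The plan for the first assertion is to exhibit a nonzero weight space of $L_{\leq\w\ctau}(\x)$ that the idempotent cutting out $\Res_{\leq\w}^{\leq\w\ctau}$ does not kill. First fix a reduced expression $\w\in{\sf rexp}_P(w)$; since $x\leq w$, the subword property of the Bruhat order gives a subword $\x$ of $\w$ which is a reduced expression of $x$, and I take this $\x$ to be the chosen reduced expression of $x$ in the cellular datum of \cref{LEW1}. As $x\neq 1_{P\backslash W}$ the word $\x$ is nonempty; write $\x=\y\ctau$ with $\ctau\in S$ its last letter, so that $\y$ is a reduced expression of $x\ctau$, and carry out the restriction for this $\ctau$. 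Recall $\Res_{\leq\w}^{\leq\w\ctau}(M)=({\sf 1}_{\leq\w}\otimes{\sf 1}_\ctau)M$ and that, via the embedding \eqref{preimage}, ${\sf 1}_{\leq\w}\otimes{\sf 1}_\ctau$ is an idempotent of ${\sf 1}_{\leq\w\ctau}\mathcal{H}_{P\backslash W}{\sf 1}_{\leq\w\ctau}$. Since $\x$ is a subword of $\w$ (hence of $\w\ctau$) and $\x\ctau=\y\ctau\ctau$ is a subword of $\w\ctau$ (use $\x$'s positions in $\w$ together with the appended $\ctau$), both ${\sf 1}_\x$ and ${\sf 1}_{\x\ctau}={\sf 1}_\x\otimes{\sf 1}_\ctau$ lie in ${\sf 1}_{\leq\w\ctau}\mathcal{H}_{P\backslash W}{\sf 1}_{\leq\w\ctau}$, with ${\sf 1}_{\x\ctau}\leq{\sf 1}_{\leq\w}\otimes{\sf 1}_\ctau$; thus it suffices to show ${\sf 1}_{\x\ctau}L_{\leq\w\ctau}(\x)\neq 0$.

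For this, choose (by the uni-triangular property $\dim_t({\sf 1}_\x L_{\leq\w\ctau}(\x))=1$ accompanying \cref{LEW1}) a nonzero $v\in{\sf 1}_\x L_{\leq\w\ctau}(\x)$, and consider inside ${\sf 1}_{\leq\w\ctau}\mathcal{H}_{P\backslash W}{\sf 1}_{\leq\w\ctau}$ the two morphisms
$$
f={\sf 1}_\y\otimes{\sf fork}^{\ctau\ctau}_\ctau\in{\sf 1}_{\x\ctau}\mathcal{H}_{P\backslash W}{\sf 1}_{\x},
\qquad
g={\sf 1}_\y\otimes({\sf spot}_\ctau^\emptyset\otimes{\sf 1}_\ctau)\in{\sf 1}_{\x}\mathcal{H}_{P\backslash W}{\sf 1}_{\x\ctau}.
$$
By bifunctoriality and the monochrome relation $({\sf spot}_\ctau^\emptyset\otimes{\sf 1}_\ctau){\sf fork}^{\ctau\ctau}_\ctau={\sf 1}_\ctau$ we get $g\circ f={\sf 1}_\y\otimes{\sf 1}_\ctau={\sf 1}_\x$, hence $g\cdot(f\cdot v)=v\neq 0$; therefore $f\cdot v$ is a nonzero element of ${\sf 1}_{\x\ctau}L_{\leq\w\ctau}(\x)\subseteq({\sf 1}_{\leq\w}\otimes{\sf 1}_\ctau)L_{\leq\w\ctau}(\x)=\Res_{\leq\w}^{\leq\w\ctau}(L_{\leq\w\ctau}(\x))$, as required. (The argument is insensitive to whether $\w\ctau\in{\sf exp}_P$, since $\Res_{\leq\w}^{\leq\w\ctau}$ and the idempotents ${\sf 1}_\x,{\sf 1}_{\x\ctau}$ are defined for every $\ctau\in S$, the latter being nonzero because the first letter of $\x$ — a reduced expression of $x\in{^PW}$ — does not lie in $S_P$.)

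The one delicate point is exactly this bookkeeping of idempotents: $\Res_{\leq\w}^{\leq\w\ctau}$ only detects the part of $L_{\leq\w\ctau}(\x)$ supported on expressions ending in the letter $\ctau$, so the proof must take $\ctau$ to be the last letter of a reduced \emph{subword of $\w$} expressing $x$ — this is precisely what keeps both ${\sf 1}_\x$ and ${\sf 1}_{\x\ctau}$ inside the truncated algebra and yields ${\sf 1}_{\x\ctau}\leq{\sf 1}_{\leq\w}\otimes{\sf 1}_\ctau$ — after which the split fork/spot pair $(f,g)$ is what transports the highest-weight vector of ${\sf 1}_\x L_{\leq\w\ctau}(\x)$ into the $\x\ctau$-weight space without annihilating it. (Note that replacing the fork $f$ by the spot map ${\sf 1}_\x\otimes{\sf spot}^\ctau_\emptyset$ would fail here, as the composite with its partner is a barbell rather than an identity.)
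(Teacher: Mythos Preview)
Your proof is correct and follows the same underlying idea as the paper: exhibit an idempotent of the form $(\text{something})\otimes{\sf 1}_\ctau$ lying under ${\sf 1}_{\leq\w}\otimes{\sf 1}_\ctau$ which does not annihilate $L_{\leq\w\ctau}(\x)$, for a suitably chosen right descent $\ctau$ of $x$.

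The only difference is a small detour. Having written $\x=\y\ctau$ with $\y$ a subword of $\w$, the paper simply observes that ${\sf 1}_{\y}\otimes{\sf 1}_\ctau$ (with the $\ctau$ in the appended slot) is itself a \emph{reduced} expression of $x$, hence the highest-weight idempotent, so it already acts nonzero on $L_{\leq\w\ctau}(\x)$ with no further work. You instead pass to the non-reduced expression $\x\ctau=\y\ctau\ctau$ and transport the highest-weight vector there via the fork/spot split pair $(f,g)$ with $g\circ f={\sf 1}_\x$. This is perfectly valid, just one step longer than necessary: your $\y$ is already a subword of $\w$, so you could have used ${\sf 1}_\y\otimes{\sf 1}_\ctau$ directly and skipped the fork argument altogether. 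Your careful bookkeeping of which subwords sit where (and the remark that $x\ctau\in{^PW}$ when $x\ctau<x$) is more explicit than the paper's rather terse version.
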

\begin{proof}
For $  1_{P\backslash W}\neq x \in  {^PW}$, there exists some $\ctau \in S$, $x' \in {^PW}$ such that 
$    x\ctau   = x' <x$. 
In which case,   $\x = \x' \ctau$ is a reduced expression for $x$ and ${\sf 1}_{\x'\ctau}\in L(x)$.  
Our assumption that $x \ctau=x' \leq x\leq w\leq w \ctau$ implies that  the preimage of 
 $ 
 {\sf 1}_{\x'\ctau}  
\in {\sf 1}_{\leq \w\ctau} \mathcal{H}_{P\backslash W}{\sf 1}_{\leq \w\ctau}$
under the map of (\ref{preimage}) is equal to $ 
   {\sf 1}_{\x' }  
\in {{\sf 1}_{\leq \w}\mathcal{H}_{P \backslash W}{\sf 1}_{\leq \w}}$
and so the result follows. 
  \end{proof}

We are now ready to prove that  $ C_\bullet(1_{P \backslash W}) $ is a   BGG resolution of the $\mathcal{H}_{P\backslash W}^\Bbbk$-module $L(1_{P\backslash W})$.  
 For $W$ the affine symmetric group, $P$ the maximal finite parabolic and $\Bbbk=\CC$, the existence of these BGG resolutions was conjectured by 
  Berkesch--Griffeth--Sam in \cite{conjecture}.  
  This conjecture was proven  by way of the {\sf KZ}-functor  in the context of  the  quiver Hecke algebras of  type $A$ (by the first and third authors with Jos\'e Simental,  \cite{BNS}).  
 In type $A$, the diagrammatic Hecke categories and (truncations of) quiver Hecke algebras were recently shown to be isomorphic in \cite{cell4us2}.  
 Thus  the following theorem generalises the BGG resolutions   \cite{conjecture,BNS}  
  to all Coxeter groups, $W$, and all parabolic subgroups, $P$, and arbitrary fields, $\Bbbk$.

\begin{thm}\label{maintheomre}
Fix 
   $W$ an arbitrary Coxeter group and fix $P$ an arbitrary parabolic subgroup.   The $\mathcal{H}_{P\backslash W}^\Bbbk$-complex $C_\bullet(1_{P\backslash W})$ is exact except in degree zero, where  $H_0(C_\bullet(1_{P\backslash W}))=L(1_{P\backslash W}).$  Moreover, we have that  $$ L(1_{P\backslash W})=\Bbbk
\{c_{\sts}  
  \mid 
   \Shape_k(\sts)={\sf 1}_{P\backslash W} \text { for all }k\geq 1 )   \}.  
$$
     \end{thm}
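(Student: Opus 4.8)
The plan is to prove exactness and the basis statement simultaneously by an inductive truncation argument, using the restriction functors $\Res^{\ell+1}_\ell$ to reduce everything to the finite-dimensional quasi-hereditary algebras ${\sf 1}_{\leq \w}\mathcal{H}_{P\backslash W}{\sf 1}_{\leq \w}$, where highest-weight theory and \cref{LEW1} are available. First I would fix $\w=\sigma_1\cdots\sigma_\ell\in{\sf exp}_P^\ell(w)$ and truncate $C_\bullet(1_{P\backslash W})$ to $C_\bullet^{\leq \w}$, the complex of ${\sf 1}_{\leq \w}\mathcal{H}_{P\backslash W}{\sf 1}_{\leq \w}$-modules obtained by applying $-\cdot{\sf 1}_{\leq\w}$ (equivalently, keeping only those $\Delta(\x)$ with $x\leq w$). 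Since every $\mathcal{H}_{P\backslash W}$-module is the union/colimit of its truncations and homology commutes with this colimit, it suffices to show each $C_\bullet^{\leq \w}$ is exact away from degree zero with $H_0$ the truncation of $L(1_{P\backslash W})$.

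The engine of the argument is the branching rule of \cref{plazaryom}: restricting along ${\sf 1}_{\leq\w}\hookrightarrow{\sf 1}_{\leq\w\ctau}$, each standard $\Delta_{\leq \w\ctau}(\x)$ either dies (when $x\ctau\notin{}^PW$) or sits in a short exact sequence with sub $\Delta_{\leq\w}(\y)$ and quotient $\Delta_{\leq\w}(\x)$ (where $x=y\ctau$). The key observation is that this is exactly the structure that makes the BGG complex behave well under restriction: the "diamond collapses to a strand" phenomenon described before \cref{compllllll} is precisely the statement that $\Res$ turns a length-$(\ell+1)$ piece of the complex into the length-$\ell$ piece with the Carter--Payne maps intact, up to the sign subtleties already fixed by the diamond condition. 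So I would run an induction on $\ell$: assuming $C_\bullet^{\leq\w}$ is a resolution of (the truncation of) $L(1_{P\backslash W})$ for all $\w$ of length $\le\ell$, apply $\Res^{\ell+1}_\ell$ to $C_\bullet^{\leq\w\ctau}$, identify the result with $C_\bullet^{\leq\w}$ summand-wise via \cref{plazaryom}, and use \cref{importantforproof} (together with the fact that $\Res$ is exact on the relevant module categories, being multiplication by an idempotent) to lift exactness back up: a complex of modules over the quasi-hereditary algebra whose restrictions under all $\Res^{\ell+1}_\ell$ are exact, and which is built from standard modules with the correct graded multiplicities, must itself be exact except in degree zero. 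The base case $\ell=0,1$ is a direct check — $\Delta_0=\Delta(1_{P\backslash W})$, $\Delta_1=\bigoplus_{\sigma\in S\setminus S_P}\Delta(\sigma)\langle1\rangle$, and $\delta_1$ is the sum of spot maps, whose cokernel is the one-dimensional module spanned by the empty diagram.

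For the basis statement, once exactness is known the Euler characteristic identity gives $[L(1_{P\backslash W})]=\sum_{w\in{}^PW}(-\grade)^{\ell(w)}[\Delta(w)]$ at the level of graded characters; but I want the honest basis, so instead I would extract it directly from the resolution. The cokernel of $\delta_1$ acting on $\Delta_0=\Delta(1_{P\backslash W})$ is, by the light-leaves description \eqref{identification}, spanned by the classes $c_\sts$ with $\sts\in\Std^P(1_{P\backslash W})$ modulo the images of the spot maps $\varphi^{\underline\sigma}_{1_{P\backslash W}}$; chasing through the light-leaves recursion for $c_{\sts^\pm}$ in the two branching cases shows that a light-leaves generator survives precisely when no "spot" is ever applied, i.e. when $\Shape_k(\sts)={\sf 1}_{P\backslash W}$ for all $k\ge1$ (every step is of the $\stt\mapsto\stt^-$ type that stays at the identity coset, with the decorating diagram an identity strand, never a spot). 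This identifies $H_0$ with the claimed span. The main obstacle I anticipate is the bookkeeping in the inductive step: one must check carefully that the signs $\eps(\alpha,\beta)$ chosen globally via the diamond condition are compatible with the restriction maps (i.e. that $\Res$ of the degree-$(\ell+1)$ differential really is the degree-$\ell$ differential and not some twist of it), and that strands — collapsed diamonds where $y\notin{}^PW$ — do not introduce spurious homology; handling the strands requires knowing the composite along a strand vanishes, which is exactly the commuting-squares computation already done in \cref{compllllll} specialized to the collapsed case. The other delicate point is lifting exactness through $\Res$: this is where one genuinely uses quasi-heredity, since a complex of $\Delta$-filtered modules that becomes exact after enough idempotent truncations, and has the right composition-series data with $L(\x)$ for $\x\neq1_{P\backslash W}$ never appearing in $H_{\bullet}$ by \cref{importantforproof}, is forced to be a resolution.
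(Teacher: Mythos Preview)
Your overall framework --- truncate to ${\sf 1}_{\leq\w\ctau}\mathcal{H}_{P\backslash W}{\sf 1}_{\leq\w\ctau}$, restrict along ${\sf 1}_{\leq\w}\hookrightarrow{\sf 1}_{\leq\w\ctau}$, then use \cref{importantforproof} to lift exactness back up --- matches the paper's strategy. The gap is in your analysis of the restricted complex. You write that you will ``identify the result with $C_\bullet^{\leq\w}$ summand-wise via \cref{plazaryom}'' and run an induction on $\ell$. But $\Res^{\leq\w\ctau}_{\leq\w}(C_\bullet^{\leq\w\ctau})$ is \emph{not} $C_\bullet^{\leq\w}$: by \cref{plazaryom} each $\Delta_{\leq\w\ctau}(\x)$ and $\Delta_{\leq\w\ctau}(\y)$ (with $x=y\ctau>y$) both restrict to modules filtered by $\Delta_{\leq\w}(\y)$ and $\Delta_{\leq\w}(\x)$, so the restricted complex has roughly twice as many standard subquotients in each degree as $C_\bullet^{\leq\w}$, and the grading shifts do not line up with those of the smaller BGG complex. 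No inductive identification of the sort you describe is available.

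What the paper does instead is a direct (non-inductive) computation: one checks from the explicit light-leaves description that the Carter--Payne map $\varphi^\x_\y$ (for $x=y\ctau$) restricts to the identity on each filtration piece, i.e.\ $\Res^{\leq\w\ctau}_{\leq\w}\circ\varphi^\x_\y={\sf id}_\x\langle 1\rangle + {\sf id}_\y\langle 1\rangle$. Consequently the restricted complex $D^{\ctau,\w}_\bullet$ decomposes as $\bigoplus_{y\ctau>y,\,\y\leq\w}\bigl(\Delta_{\leq\w}(\y)\langle\ell(y)\rangle\oplus\Delta_{\leq\w}(\y)\langle\ell(y)+1\rangle\bigr)$ with differential whose $\ctau$-components are identity maps; this makes it contractible outright, with $H_j=0$ for all $j\geq 0$ (no induction needed). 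Summing over all $\ctau$ and $\w$ and invoking \cref{importantforproof} then forces $H_j({\sf 1}_{\leq\ell+1}C_\bullet)=0$ for $j>0$ and $H_0={\sf 1}_{\leq\ell+1}L(1_{P\backslash W})$, since any other composition factor in $H_\bullet$ would survive some restriction. The basis statement is then immediate: $L(1_{P\backslash W})$ is killed by ${\sf 1}_\ctau$ at every position, so only the light leaf $c_\sts$ with $\Shape_k(\sts)=1_{P\backslash W}$ for all $k$ survives. Your proposal would become correct if you replaced the inductive identification with this contractibility computation.
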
 

\begin{proof} 
By applying the restriction functor to  \cref{compllllll2},  we have that 
\begin{align}
 \Res^{ {  \ell+1}}_{ {  \ell} }
 ( C_\bullet(1_{P\backslash W}){\sf 1}_{\leq \ell+1})
 \end{align}
forms a complex of ${\sf 1}_{\leq \ell}\mathcal{H}_{P\backslash W}^\Bbbk{\sf 1}_{\leq \ell}
$-modules.  Moreover, we can idempotent-truncate  
\begin{equation}\label{ppppp}
D^{\ctau,\w}_\bullet (1_{P\backslash W}) =  ( \Res^{ {  \ell+1}}_{ {  \ell} }({\sf 1}_{\leq \ell+1}C_\bullet(1_{P\backslash W}))) ( {\sf 1}_{\leq \w}\otimes {\sf 1}_{\ctau})
 \end{equation}and hence obtain a  complex of 
 ${\sf1}_{\leq \w}{ \mathcal{H}_{P\backslash W}^\Bbbk{\sf1}_{\leq \w}
 }$-modules (through the identification of $
  {\sf 1}_{\leq \w } \mathcal{H}_{P\backslash W}{\sf 1}_{\leq \w }
  \hookrightarrow 
  {\sf 1}_{\leq \w\ctau} \mathcal{H}_{P\backslash W}{\sf 1}_{\leq \w\ctau}
  $).   
Let $x,y\in {^PW}$ with $x=  y\ctau >y$.     
For   $y \in {^PW}$, we have that already seen that 
\begin{align}
   0\to \{c_{\stt^+}\mid \stt \in \Std_{\leq \w}(y)\} \to 
 \Res^{ {\leq \w \ctau}  }_{  {\leq \w  } }(   \Delta_{\leq \w\ctau}(x))
    \to 
     \{c_{\stt^+}\mid \stt \in \Std_{\leq \w}(x)\} \to 0
\end{align}
   and 
\begin{align}
   0\to \{c_{\stt^-}\mid \stt \in \Std_{\leq \w}(y)\} \to 
\Res^{ {\leq \w \ctau}  }_{  {\leq \w  } }(    \Delta_{\leq \w\ctau}(y)) 
    \to 
     \{c_{\stt^-}\mid \stt \in \Std_{\leq \w}(x)\} \to 0
\end{align}
 where in both cases the submodule is isomorphic to $\Delta_{\leq \w}(y)$  and the quotient module is    
    isomorphic to $\Delta_{\leq \w}(x)$.  
Since $\x=\y\ctau$,  we have  that 
\begin{align}
\varphi^ x_ y(c_{\stt^+})=({\sf1}_ {\y} \otimes 
 {\sf spot}^{ \ctau} _\emptyset 
 )   c_{\stt^+}
   =   c_{\stt^-}
\end{align}
for any $\stt \in \Std_{\leq \w}(x)$ or  $\stt\in \Std_{\leq \w}(y)$    by definition.   
Therefore, we have that 
\begin{align}
\big( \Res^{ {\leq \w\ctau}}_{  {\leq \w}}
 \circ  
\varphi^{ x
}_{ y
} 
\big)
=   {\sf id}_\x \langle 1 \rangle  +  
  {\sf id}_\y \langle 1 \rangle   
  \end{align}
for $x,y\in {^PW}$ where 
\begin{align}
 {\sf id}_\vvv \langle 1 \rangle  \in {\rm Hom}_{{\sf1}_{\leq \w}{ \mathcal{H}_{P\backslash W}^\Bbbk{\sf1}_{\leq \w}
 }}(\Delta_{\leq \w}(z)\langle \ell(z)\rangle,\Delta_{\leq \w}(z)\langle \ell(z)+1 \rangle)
\end{align}
is simply the graded shift of the identity map for $\vvv=\x,\y $ for $x,y \in W$.  
This implies that
\begin{align}
D^{\ctau,\w}_\bullet (1_{P\backslash W}) =  \bigoplus_{
\begin{subarray}c y\ctau > y
\\ 
\y \leq  \w
\end{subarray} }\big(\Delta(y)\langle \ell(y)\rangle\oplus 
\Delta(y)\langle \ell(y)+1\rangle\big)
\end{align}
with differential 
\begin{align}
  \Res^{ {\leq \w \ctau}  }_{  {\leq \w  } }
\circ  \delta_\ell =
 \sum_ 
 {\begin{subarray}c
 (x,y) \in  {\sf CP}_\ell\\
 \x = \y \ctau 
\end{subarray} } ({\sf id}_\x\langle 1  \rangle 
 +
  {\sf id}_\y \langle 1 \rangle ) 
  + 
 \sum_ 
 {\begin{subarray}c
 (s,t) \in  {\sf CP}_\ell\\
 {\underline{s}} \neq  {\underline{t}} \ctau 
\end{subarray} } (\Res^{ {\leq \w\ctau}}_{  {\leq \w}}
 \circ  
\varphi^{{{s}}
}_{{{t}}
} ).
\end{align}
 Thus we have that 
\begin{align}H_j(
(  \Res^{  {\leq \w\ctau}}_{  {\leq \w}
 }( (
 C_\bullet(1_{P\backslash W})
 ) ( {\sf 1}_{\leq \w}\otimes {\sf 1}_{\ctau})
 ){\sf 1}_{\leq \ell+1})=0
 \end{align} for all $j\geq 0$. 
 Now, summing over all $\ctau \in S$, $w\in W$, and $\w \in {\sf  exp}_P^\ell(w)$ we deduce that 
\begin{align}
\Res^{ { \ell+1}}_{  {  \ell}
 }( C_\bullet(1_{P\backslash W}){\sf 1}_{\leq \ell+1})
 \end{align}
  forms  a complex with zero homology in every degree. 
By \cref{importantforproof},  we have that restriction kills no simple $\mathcal{H}_{P\backslash W}^\Bbbk$-module $L(w)$ for $1\neq w\in {^P}W$.   
 Moreover,  
\begin{align}
{\rm Head}( \Delta (
 1_{P\backslash W}
 ){\sf 1}_{\leq \ell+1})= 
 L (1_{P\backslash W}) {\sf 1}_{\leq \ell+1} \not \subset  {\rm Im}(\delta_1)
\end{align}
 and 
 $[{\sf 1}_{\ell+1}\Delta (w):{\sf 1}_{\ell+1} L (1_{P\backslash W})]=0$ for $  1_{P\backslash W}\neq w \in {^P}W$
 simply because the highest weight structure on $\mathcal{H}
 _{P \backslash W}$ is given by the Bruhat order.  
Therefore 
\begin{align}
H_j( C_\bullet(1_{P\backslash W}){\sf 1}_{\leq \ell+1})  
 =
 \begin{cases}
  L (1_{P\backslash W}){\sf1}_{\leq \ell+1} &\text{if }j=0\\
 0		&\text{otherwise}.
 \end{cases}
 \end{align}
 Finally, we have proven that $L(1_{P\backslash W})$ is killed by multiplication by the 
idempotent ${\sf 1}_\ctau$ at the $\ell$th point for any $\ell\geq 1$ and for any $\ctau \in S.$  
Thus $L(1_{P\backslash W})$ is spanned by $c_\sts$ for $\sts$ the empty tableau, as required.   
   \end{proof}

We immediately deduce the following corollary, which   is new even for the classical (inverse) parabolic and non-parabolic Kazhdan--Lusztig polynomials (in other words, for $\Bbbk$ the complex field).  Indeed, this seems to be the first non-trivial   family of parabolic   ($p$-)Kazhdan--Lusztig polynomials which admits a   uniform construction across all Coxeter groups and all parabolic subgroups.  

\begin{cor}[The Weyl--Kac character formula for Coxeter groups]
In the graded Grothendieck group of $\mathcal{H}_{{P \backslash W}}$, we have that 
$$
[L(1_{{P \backslash W}})]= \sum_{w\in {^PW}}(-\grade)^{\ell(w)}[\Delta(w)] 
$$
Thus the complete  first row of the inverse $p$-Kazhdan--Lusztig matrix is given by 
 $$
{^p}n_{1,w }^{-1}=(-\grade)^{\ell(w)} 
$$
for all  $w\in {^PW}$.  
\end{cor}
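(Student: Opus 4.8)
The plan is to read off the character formula as the graded Euler characteristic of the BGG resolution of \cref{maintheomre}, and then to transcribe it into the statement about $\mathbf{D}^{-1}_{P\backslash W}$. By \cref{maintheomre} the complex $C_\bullet(1_{P\backslash W})$ of graded $\mathcal{H}_{P\backslash W}$-modules is acyclic in every homological degree except $0$, where its homology is $L(1_{P\backslash W})$; and by the definition \eqref{modi} the term in homological degree $\ell$ is $\Delta_\ell=\bigoplus_{w\in\mathcal{P}_\ell}\Delta(\w)\langle\ell(w)\rangle$, so each $\Delta(\w)$ with $w\in{^PW}$ occurs exactly once, in homological degree $\ell(w)$ and with internal grading shift $\langle\ell(w)\rangle$. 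Granting that the Euler characteristic of this (infinite) complex makes sense in the graded Grothendieck group, equating the alternating sum of the classes of the terms of a complex acyclic away from degree $0$ with the class of its degree-$0$ homology gives $[L(1_{P\backslash W})]=\sum_{\ell\geq 0}(-1)^{\ell}[\Delta_\ell]=\sum_{w\in{^PW}}(-1)^{\ell(w)}[\Delta(\w)\langle\ell(w)\rangle]$; since $[\Delta(\w)\langle\ell(w)\rangle]=\grade^{\ell(w)}[\Delta(\w)]$ this is exactly $\sum_{w\in{^PW}}(-\grade)^{\ell(w)}[\Delta(\w)]$, as claimed.

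The one point requiring care is the legitimacy of this infinite alternating sum, and I would handle it by descending to the finite-dimensional truncations recalled in the previous section. Fix $w\in W$ with a reduced expression $\w$ and apply the exact idempotent truncation ${\sf 1}_{\leq\w}(-){\sf 1}_{\leq\w}$ (equivalently pass to ${\sf 1}_{\leq\ell}\mathcal{H}_{P\backslash W}{\sf 1}_{\leq\ell}$): only the finitely many standard modules $\Delta(\x)$ with $x\leq w$ (respectively $\ell(x)\leq\ell$) survive, so the truncated complex is a \emph{finite} complex of finite-dimensional graded modules over the finite-dimensional quasi-hereditary algebra of \cref{LEW1}, still acyclic away from degree zero with $H_0={\sf 1}_{\leq\w}L(1_{P\backslash W})$ --- indeed the proof of \cref{maintheomre} establishes exactly this acyclicity already at the level of the truncations, using \cref{importantforproof} to see that truncation does not kill $L(1_{P\backslash W})$. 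The ordinary graded Euler characteristic identity is valid in the graded Grothendieck group of this finite-dimensional algebra, where the classes $[\Delta_{\leq\w}(\x)]$ (for $x\leq w$) form a $\ZZ[\grade,\grade^{-1}]$-basis. Because $S$ is finite each $\mathcal{P}_\ell$ is finite, and because the truncations are compatible as $w$ (resp. $\ell$) increases, passing to the limit gives the displayed formula as a well-defined identity in the graded Grothendieck group of $\mathcal{H}_{P\backslash W}$ (the infinite sum being interpreted through these compatible finite truncations), in which the $[\Delta(\w)]$ stay linearly independent.

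For the inverse $p$-Kazhdan--Lusztig matrix, recall that the defining formula for ${^p}n_{x,y}(\grade)$ records the graded composition multiplicities of the standard modules, i.e.\ $[\Delta(\y)]=\sum_{x\in{^PW}}{^p}n_{x,y}(\grade)[L(\x)]$, so that $\mathbf{D}_{P\backslash W}=({^p}n_{x,y}(\grade))_{x,y}$ is the lower uni-triangular (hence invertible over $\ZZ[\grade,\grade^{-1}]$) transition matrix between the bases $\{[L(\x)]\}$ and $\{[\Delta(\y)]\}$ of the relevant graded Grothendieck group. Inverting it expresses the simple classes back in terms of the standard classes, and $\mathbf{D}^{-1}_{P\backslash W}=({^p}n^{-1}_{x,y}(\grade))_{x,y}$; reading off the line of this inverse matrix corresponding to $L(1_{P\backslash W})$ and comparing with the character formula just proved --- and using linear independence of the $[\Delta(\y)]$ --- identifies its first row as ${^p}n^{-1}_{1,w}(\grade)=(-\grade)^{\ell(w)}$ for every $w\in{^PW}$.

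The main obstacle is therefore not any new mathematics but the bookkeeping around the infinite complex and the Grothendieck group: once everything is phrased inside the finite-dimensional algebras ${\sf 1}_{\leq\w}\mathcal{H}_{P\backslash W}{\sf 1}_{\leq\w}$ of \cref{LEW1} --- which is where \cref{maintheomre} already lives --- the corollary is a purely formal consequence, the only genuine identifications being of grading shifts with powers of $\grade$ and of homological signs with the factors $(-1)^{\ell(w)}$.
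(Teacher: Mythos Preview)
Your argument is correct and is precisely the standard Euler-characteristic reading of the BGG resolution that the paper has in mind; the paper itself gives no proof beyond the phrase ``We immediately deduce the following corollary'', so you have simply spelled out the details (truncation to make the alternating sum finite, and the transition-matrix interpretation of $\mathbf{D}^{-1}_{P\backslash W}$) that are left implicit there.
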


\begin{thm}
The   module $L(1_{P\backslash W})$ is both the unique homogeneous   $\mathcal{H}_{P\backslash W}^\Bbbk$-module and  the unique 1-dimensional $\mathcal{H}_{P\backslash W}^\Bbbk$-module.  
\end{thm}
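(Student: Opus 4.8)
The plan is to prove that $L(1_{P\backslash W})$ is one-dimensional and concentrated in a single degree, and that conversely every simple $\mathcal{H}_{P\backslash W}$-module concentrated in a single degree (a \emph{homogeneous} module, in the sense of \cite{JEMS}) is isomorphic to $L(1_{P\backslash W})$ up to grading shift; both uniqueness claims then follow, since a one-dimensional module is automatically simple and automatically homogeneous. The first assertion is immediate from \cref{maintheomre}, which exhibits the $\Bbbk$-basis $\{c_\sts\mid \Shape_k(\sts)=1_{P\backslash W}\text{ for all }k\geq 1\}$ of $L(1_{P\backslash W})$: the only tableau satisfying this condition is the empty one, which has degree $0$ and lies in the summand ${\sf 1}_\emptyset L(1_{P\backslash W})$. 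So $L(1_{P\backslash W})$ is one-dimensional, concentrated in degree $0$, and supported at the empty-word idempotent ${\sf 1}_\emptyset$.

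For the converse, let $M$ be a simple $\mathcal{H}_{P\backslash W}$-module concentrated in a single degree $d$. First I would show that ${\sf 1}_{\w'}M=0$ for every nonempty expression $\w'\in{\sf exp}_P$, so that $M={\sf 1}_\emptyset M$. Write $\w'=\w\csigma$ with $\csigma\in S$ its last letter. Tensoring the monochrome relation $({\sf spot}_\csigma^\emptyset\otimes{\sf 1}_\csigma){\sf fork}^{\csigma\csigma}_{\csigma}={\sf 1}_\csigma$ on the left with ${\sf 1}_\w$ and applying the bifunctoriality relation produces, in $\mathcal{H}_{P\backslash W}$, the identity
$$
{\sf 1}_{\w\csigma}=\big({\sf 1}_\w\otimes{\sf spot}_\csigma^\emptyset\otimes{\sf 1}_\csigma\big)\,\big({\sf 1}_\w\otimes{\sf fork}^{\csigma\csigma}_{\csigma}\big).
$$
Now ${\sf fork}^{\csigma\csigma}_{\csigma}$ is a trivalent vertex, of degree $-1$, so the right-hand factor ${\sf 1}_\w\otimes{\sf fork}^{\csigma\csigma}_{\csigma}$ carries the degree-$d$ space ${\sf 1}_{\w\csigma}M$ into the degree-$(d-1)$ part of $M$, which is zero. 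Hence this factor, and therefore the composite ${\sf 1}_{\w\csigma}$, annihilate ${\sf 1}_{\w\csigma}M$; since ${\sf 1}_{\w\csigma}$ also acts as the identity on ${\sf 1}_{\w\csigma}M$, we get ${\sf 1}_{\w\csigma}M=0$. As every nonempty element of ${\sf exp}_P$ arises this way, $M={\sf 1}_\emptyset M$.

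To conclude, I would identify $M$ with $L(1_{P\backslash W})$. The cyclotomic relation applied to the empty word (which is the reduced expression of $1_W$, hence lies in ${\sf exp}(1_W)$) kills every barbell ${\sf spot}^\emptyset_\al{\sf spot}^\al_\emptyset$; since the closed Soergel diagrams are generated by barbells, this forces ${\sf 1}_\emptyset\mathcal{H}_{P\backslash W}{\sf 1}_\emptyset\cong\Bbbk$. Thus the simple module $M={\sf 1}_\emptyset M$ is one-dimensional, and its $\mathcal{H}_{P\backslash W}$-action is completely determined --- a diagram with nonempty northern or southern boundary acts as $0$, and ${\sf 1}_\emptyset$ acts as the identity --- so it coincides with the module structure on $L(1_{P\backslash W})$ described in \cref{maintheomre}. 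Therefore $M\cong L(1_{P\backslash W})$ up to grading shift, and since distinct $L(w)$, $w\in {}^PW$, are non-isomorphic, this pins down the homogeneous simple module uniquely. (As an alternative last step, \cref{importantforproof} shows that restriction to level $0$ kills no $L(w)$ with $w\neq 1_{P\backslash W}$, whereas we have just shown $M$ is supported at ${\sf 1}_\emptyset$ alone.) Both statements of the theorem follow, the one-dimensional case because a one-dimensional module is simple and homogeneous.

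The only step I expect to require genuine care is the diagrammatic bookkeeping above: checking that the displayed factorisation of ${\sf 1}_{\w\csigma}$ really is the monochrome relation, correctly tensored and composed, so that the degree $-1$ of the trivalent vertex forces the vanishing; together with the (standard) evaluation ${\sf 1}_\emptyset\mathcal{H}_{P\backslash W}{\sf 1}_\emptyset\cong\Bbbk$ coming from the cyclotomic relation. The remaining ingredients --- the degrees of the Soergel generators, orthogonality of the idempotents ${\sf 1}_\w$, and the classification of the simple $\mathcal{H}_{P\backslash W}$-modules --- are already available, and the argument uses no finiteness of $M$, since it takes place entirely inside the finite-dimensional graded pieces ${\sf 1}_\w M$.
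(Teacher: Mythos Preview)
Your argument is correct and takes a genuinely different route from the paper's. The paper argues contrapositively: for each $1\neq w\in {^PW}$ it picks $\ctau$ with $w\ctau<w$, looks at the two light-leaves elements ${\sf 1}_{\w'}\otimes{\sf spot}^{\ctau}_{\emptyset}\otimes{\sf 1}_{\ctau}$ and ${\sf 1}_{\w'}\otimes{\sf fork}^{\ctau\ctau}_{\ctau}$ spanning a weight space of $\Delta_{\leq \w\ctau\ctau}(\w)$, computes the $2\times 2$ Gram matrix of the cellular form on that weight space, checks it has rank $2$, and concludes that both elements (of degrees $\pm 1$) survive in $L(w)$. Your approach instead attacks an arbitrary homogeneous simple $M$ directly: the monochrome relation factors each nonempty idempotent ${\sf 1}_{\w\csigma}$ through the degree-$(-1)$ map ${\sf 1}_{\w}\otimes{\sf fork}^{\csigma\csigma}_{\csigma}$, which must annihilate $M$, forcing $M={\sf 1}_{\emptyset}M$; then the cyclotomic relation gives ${\sf 1}_{\emptyset}\mathcal{H}_{P\backslash W}{\sf 1}_{\emptyset}\cong\Bbbk$ and hence $M\cong L(1_{P\backslash W})$. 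Both arguments ultimately rest on the same diagrammatic fact (the fork--spot relation), but yours avoids any Gram-matrix computation and any appeal to the classification of simples, while the paper's version is more explicit about \emph{which} two inhomogeneous elements witness the failure for each $L(w)$. One small point: in your displayed factorisation you wrote ${\sf spot}^{\emptyset}_{\csigma}$ where the relation in the paper has ${\sf spot}_{\csigma}^{\emptyset}$; this is just a sub/superscript slip and does not affect the argument.
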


\begin{proof}
That the module $L(1_{P\backslash W})$ is homogeneous is clear (as it is 1-dimensional).  We now prove the converse, namely  for any $1\neq w\in {^PW} $ we show that 
$L(w)$ is inhomogeneous and of dimension strictly greater than 1.  
 Let $1\neq w\in  {^PW}$ and choose $\tau$ such  that $w\ctau=w' <w$.  
 By \cref{LEW1}, the elements 
 \begin{equation}\label{hiydddd}
{\sf 1}_{\w' } \otimes {\sf spot}_\ctau^\emptyset \otimes {\sf 1}_\ctau
\qquad
{\sf 1}_{\w' } \otimes  {\sf fork}_{\ctau\ctau}^{\ctau}
\end{equation}span $\Delta_{\leq \w \ctau\ctau}(w)$.  The former is homogeneous of degree $+1$ and the latter is homogeneous of degree $-1$.  We have that 
\begin{align}
({\sf 1}_{\w' } \otimes  {\sf fork}_{\ctau\ctau}^{\ctau}) 
 ({\sf 1}_{\w' } \otimes  {\sf spot}^\ctau_\emptyset \otimes {\sf 1}_\ctau) 
={\sf 1}_{\w'}
\end{align}
  and 
\begin{align}
({\sf 1}_{\w'} \otimes  {\sf spot}_\ctau^\emptyset \otimes {\sf 1}_\ctau )
({\sf 1}_{\w'} \otimes  {\sf spot}^\ctau_\emptyset \otimes {\sf 1}_\ctau )  =
 {\sf 1}_{\w'} \otimes   
{\sf spot}_\ctau^\emptyset{\sf spot}^\ctau_\emptyset \otimes {\sf 1}_\ctau  
 =0
 \pmod{ \mathcal{H}_{P\backslash W}^{< \w}{\sf 1}_{\leq \w\ctau\ctau}}  
\end{align}
since the  degree of this element  is $+2$ (whereas the degree of the idempotent spanning the weight space of the cell module is, of course, of degree 0) and 
$$
({\sf 1}_{\w'} \otimes  {\sf fork}_{\ctau\ctau}^{\ctau })
({\sf 1}_{\w'} \otimes  {\sf fork}^{\ctau\ctau}_{\ctau\emptyset}) 
 =0 .  
$$ 
Therefore the Gram matrix for this weight space of the cellular form is 
the $2\times 2$-matrix with 0s on the diagonal and 1s off the diagonal.  This matrix has rank 2 and so neither element in (\ref{hiydddd}) belongs to the radical of the cellular form.  
Therefore both  elements in (\ref{hiydddd}) belong to $L_{\leq \w\ctau\ctau}(w)$ 
 and   the result follows.    
\end{proof}

   \begin{rmk}
We recall from the introduction  that the conjecture of Berkesch--Griffeth--Sam (or rather, its equivalent formulation for homogeneous representations of quiver Hecke algebras)  follows immediately  from Theorem B.  This might be surprising to the reader familiar with the homogeneous representations of quiver Hecke algebras.    
   In \cite{JEMS} it is shown that there are up to  $e-1$ distinct homogeneous representations of any block of the quiver Hecke algebras (and for sufficiently large rank, there are precisely $e-1$ such representations for a ``regular block").  Whereas, in this paper we have seen that there is precisely one homogeneous representation of $\mathcal{H}_{P\backslash W}^\Bbbk$ for $\mathfrak{S}_{h}=P\subset W=\widehat{\mathfrak{S}}_{h} $ for  $h \in \NN$.  
Therefore, one might think that there are  ``more" homogeneous representations of the quiver Hecke algebra.  
 However, for each $1\leq h <e$ there is an isomorphism between a finite truncation of $\mathcal{H}_{P\backslash W}^\Bbbk$ and  the Serre quotient of the quiver Hecke algebra corresponding to the set of partitions with at most $h$ columns  \cite{cell4us2}. 
Through these isomorphisms, one can obtain the
 $e-1$ distinct  BGG resolutions of the  $e-1$ distinct homogeneous simple modules of the quiver Hecke algebra predicted by  Berkesch--Griffeth--Sam  \cite{conjecture}.  
   \end{rmk}



%
%

%

We now provide an elementary infinite family of simple modules which do not admit BGG resolutions, in order to justify our claim in the introduction  that such resolutions are ``rare".  
In \cite{bgg} an example of such a simple for  $W=\mathfrak{S}_4$ is given.  
We focus on  the simplest case, namely that of the 
anti-spherical category  controlling  the algebraic group $ {\rm SL}_2(\Bbbk)$.

\color{black}
\begin{prop}\label{potrpotrptor} 
 Let $\Bbbk$ be a field of   finite characteristic $p> 0$.   
There exist  infinitely many   simple $\mathcal{H}_{  \mathfrak{S}_2\backslash \widehat{\mathfrak{S}}_2}^\Bbbk$-modules which do  not admit   BGG resolutions.  
\end{prop}

\begin{proof}\color{black}
This is a standard Temperley--Lieb type result, we provide references in \cref{belooow}  below but we include a proof for the sake of completeness.  
 The Coxeter presentation of $\widehat{\mathfrak{S}}_2$ is   $   \langle \csigma, \ctau  \mid \ctau ^2=\csigma^2=1\rangle$ and we let $P$ denote the finite parabolic generated by the reflection $\ctau  $.    
We will provide an infinite family of examples of 
$x\in W$ such that  $\rad(\Delta(x))$ is not generated by the homomorphic images of standard modules,  thus showing that each such  $L(x)$ does not admit a BGG resolution.  
For $n\in \NN$, we set \begin{equation}
\x= \underbrace{\csigma\ctau\csigma\ctau\csigma \dots   }_{{np}-1}\qquad 
 \vvv= 
 \underbrace{\csigma\ctau\csigma\ctau\csigma \dots   }
 _{{np}}\qquad
\y= 
\underbrace{\csigma\ctau\csigma\ctau\csigma \dots   }
_{{np}+1}  
\end{equation}  so that, in the   notation of \cref{itemmmmmmmmm}, we have 
 \begin{equation}
{\sf 1}_\x= 
 {\sf 1}_{\csigma\ctau}^{{{np}-1}}
\qquad 
{\sf 1}_ \vvv= 
 {\sf 1}_{\csigma\ctau}^{{{np} }}
\qquad
{\sf 1}_\y= 
 {\sf 1}_{\csigma\ctau}^ {{np}+1}  .
\end{equation} 
Suppose that  $L(y)$ is a subquotient of  $\Delta(x)$ 
and that $L(y)$ belongs to the submodule  generated by the homomorphic images of standard modules.  
Then $L(y)$ must be in the image of a homomorphism from $\Delta(y)$ or $\Delta(z)$ 
by highest weight theory.  (In more detail, we simply note that $[\Delta(w):L(y)]\neq 0$ implies that $w\leq y$ and that 
$ \Hom_{\mathcal{H}_{P\backslash W}^\Bbbk}(\Delta(w), \Delta(x))\neq 0$ implies that $x\leq w$ and therefore 
$x\leq w \leq y$; thus $w\in \{x,y,z\}$.)  
%
The module $\Delta ( x){\sf 1}_{\underline{y}}$ is $({np}-1)$-dimensional and spanned by the light leaves basis elements 
\begin{equation}
f_k:=  
\begin{cases}
({\sf1}_{\csigma\ctau}^{k-1} \otimes {\sf fork}_{\csigma \csigma}^{\csigma }  \otimes {\sf1}_{\ctau\csigma}^{np-1-k} ) 
  ( {\sf1}_{\csigma\ctau}^k  \otimes {\sf spot}^\emptyset_{\ctau}
 \otimes  {\sf1}^{np-k}_{\csigma\ctau}  )  & \text{if $k$ is odd,}
\\ 
({\sf1}_{\csigma\ctau}^{k-1} \otimes {\sf fork}_{\ctau \ctau}^{\ctau }  \otimes {\sf1}_{\csigma\ctau}^{np-1-k} ) 
  ( {\sf1}_{\csigma\ctau}^k  \otimes {\sf spot}^\emptyset_{\csigma}
 \otimes  {\sf1}^{np-k}_{ \ctau\csigma}  ) & \text{if $k$ is even,}
 \end{cases}
\end{equation}
 for $1\leq k < {np}$.
We now calculate the full submodule structures of standard modules and hence verify that $L(y)$ is not in the image of any homomorphism  $ \Hom_{\mathcal{H}_{P\backslash W}^\Bbbk}(\Delta(w), \Delta(x))$ for $w \in \{x,y,z\}$.

\smallskip\noindent{\bf Decomposition numbers.} We will prove that the Gram matrix of $\Delta (x){\sf 1}_{\underline{y}}$ has rank $\dim (\Delta_{\leq \y}(x))-\dim (L_{\leq \y}(y))=np-1-1=np-2$.  Thus proving that 
$[\Delta(x):L(y)]=1 $ using 
\cref{fuckoffre} (this is a standard cellular argument).  
 The Gram-matrix of the cell-form of $\Delta ( x){\sf 1}_{\underline{y}}$ has $-2$ for  each of the diagonal entries and $1$ for each of the super and subdiagonals (in other words, it is equal to  $-1$ times the Cartan matrix of type $A_{np-1}$).  The determinant of this matrix is $np$ which is zero in $\Bbbk$.  Over $\Bbbk$, the rank of this matrix is $np-2$  and so 
$[\Delta(x): L(y)  ]  = 1$ as required.

\smallskip\noindent{\bf Submodule structures and homomorphisms. }
By considering the light leaves basis, one deduces  that $\Delta_{\leq \y}(y))=L_{\leq \y}(y)$ is 1-dimensional
 and that 
$\Delta_{\leq \y}(z))$ is 2-dimensional with simple head 
$L_{\leq \y}(z)$ and simple socle $L_{\leq \y}(y)$.  
Suppose that the socle of $\Delta_{\leq \y}(x))$ contains a submodule isomorphic to $L_{\leq \y}(y)$.
This submodule must be the span   of some element 
 $g=\sum \alpha_k f_k$ for $\alpha_k \in \Bbbk$ 
 with $gf_k^\ast$ vanishing in $\Delta_{\leq \y}(x)$ for all $1\leq k<np$, i.e.
\begin{equation*}
 g f_k^\ast \in \mathcal{H}_{  \mathfrak{S}_2\backslash \hat{\mathfrak{S}}_2}^{<x} 
\end{equation*}
 for all   $1\leq k <np$. 
 We have that $f_{k\pm1} f_k^\ast  ={\sf 1}_\x$ and 
 $f_{k }f_k^\ast=-2\cdot {\sf 1}_\x$ modulo $\mathcal{H}_{  \mathfrak{S}_2\backslash \widehat{\mathfrak{S}}_2}^{<x}$.  
Thus
$$g=\alpha \big( f_{np-1}	+2   f_{np-2} +3f_{np-3}+4f_{np-4}
+5f_{np-5}+\dots + (np-1) f_{1}\big),$$
for some $\alpha \in \Bbbk\setminus \{0\}$.  
However, we notice that for $g$ as above,
\begin{align*}
\alpha {\sf 1}_{\csigma\ctau}^{np-1} \otimes {\sf spot}^\emptyset_{\ctau} & = g({\sf 1}_{\csigma\ctau}^{np-2} \otimes	{\sf spot}^\csigma_\emptyset) & & \text{if $np$ is even,} \\
\alpha {\sf 1}_{\csigma\ctau}^{np-1} \otimes {\sf spot}^\emptyset_{\csigma} & =g({\sf 1}_{\csigma\ctau}^{np-2} \otimes	{\sf spot}^\ctau_\emptyset) & & \text{if $np$ is odd,}
\end{align*}
and so the submodule generated by $g$ contains $L_{\leq \y}(z)$ as a submodule. Thus  $\Delta_{\leq \y}(x)$ is uniserial with simple head 
$L_{\leq \y}(x)$, simple socle 
$L_{\leq \y}(z)$, and the middle composition factor $L_{\leq y}(y)$. 
Thus $L_{\leq \y}(x)$  
 not  the   image of a homomorphism from $\Delta_{\leq \y}(z)$ or 
$\Delta_{\leq \y}(y)$, as required.   
  \end{proof}
  
%

\begin{rmk}
There is a theory of  ${``\rm SL}_2(\Bbbk)$-strings{\rm "} for algebraic groups. 
This theory allows to inflate  
decomposition numbers and extension groups  between standard  modules   for ${\rm SL}_2(\Bbbk)$ to 
calculate certain  decomposition numbers and homomorphisms between standard modules for  arbitrary algebraic groups (see \cite[Part II, 5.21 (2)]{Jan03} and \cite[Introduction]{MR1324539} for decomposition numbers and extension groups respectively).  
One can use the equivalence between algebraic groups and anti-spherical Hecke categories from \cite{BR} in order  to translate these statements to the setting of  $\mathcal{H}_{P\backslash W}$-modules (in the case that 
 $P$ is the maximal parabolic of an affine Weyl group $W$).  
Thus one can use \cref{potrpotrptor} to provide many examples of simple $\mathcal{H}_{P\backslash W}$-modules which do not admit BGG resolutions. We do not go into further details here.  \end{rmk}

 \color{black}
\begin{rmk}\label{belooow}
Through the isomorphism of \cite{cell4us2}, we can  rephrase the above as a question concerning  decomposition numbers and homomorphisms for the symmetric group
$\mathfrak{S}_{np^2+p}$ in characteristic $p>2$.    
We let 
$S(\lambda)$ denote  the Specht module and $D(\mu)$ is the simple head for $\lambda,\mu$ partitions (the latter $p$-regular).  
We have that $
 [S( np^2,p): D(np^2+p)  ] = 1,
$ however 
$ D({np^2+p} )$ is not in the image of any homomorphism  $
 \Hom_{\Bbbk\mathfrak{S}_{np^2+p}}(S(\lambda) ,S(np^2,p) ))$.  
This example was already known to Gordon James in \cite[24.5 Examples]{James} for $p=2$ and the general case is similar, see \cite[24.4 Theorem]{James} and \cite[24.15 Theorem]{James}.    
For $p$ arbitrary, the full submodule structures of Specht modules labelled by 2-part partitions were determined in \cite{MR2025550}.  
\end{rmk}


%

\begin{Acknowledgements*}
We would like to thank George Lusztig and Stephen Donkin for their helpful comments.  
We would also like to thank the anonymous referees for their detailed reading of the paper and their helpful suggestions.  
The   authors are grateful for funding from EPSRC  grant EP/V00090X/1, the Royal Commission for the Exhibition of 1851, and European Research Council   grant No.~677147, respectively.

\end{Acknowledgements*}
       \bibliographystyle{amsalpha}   
\bibliography{master}

 \end{document}